\newcommand{\RN}[1]{\textup{\uppercase\expandafter{\romannumeral#1}}}
\numberwithin{equation}{section}
\newcommand\mc{\mathcal}
\newcommand\mf{\mathfrak} 
\newcommand\mb{\mathbb}
\crefname{equation}{}{} 
\newtheorem{theorem}{Theorem}[section]  
\newtheorem{lemma}[theorem]{Lemma}
\newtheorem{proposition}[theorem]{Proposition}
\newtheorem{corollary}[theorem]{Corollary}
\newtheorem{definition}[theorem]{Definition}
\newcommand{\Gcd}{\mathrm{gcd}}
\newcommand{\gl}{\mathrm{gl}}
\theoremstyle{definition}
\newtheorem{remark}[theorem]{Remark} 
\crefname{section}{\S}{\S\S}
\crefname{subsection}{\S}{\S\S}
\title{Instantons on the blown-up surface and the Affine Vertex Algebra}
\begin{document}
\author{Qingyuan Jiang}
\address{Department of Mathematics,
The Hong Kong University of Science and Technology, Clearwater Bay, Kowloon, Hong Kong.} 
\email{jiangqy@ust.hk}

\author{Wei-Ping Li}
\address{Department of Mathematics,
The Hong Kong University of Science and Technology, Clearwater Bay, Kowloon, Hong Kong.} 
\email{mawpli@ust.hk}

\author{Yu Zhao}
\address{School of Mathematics and Statistics, Beijing Institute of Technology, Haidian, Beijing, China}
\email{zy199402@live.com}
\begin{abstract}
We answer a long-standing question raised by Vafa--Witten \cite{Vafa-Witten} on a relation between S-duality and conformal field theory, which related Yoshioka's blow-up formula \cite{Yoshioka1995,Yoshioka1996} and the WZW model for $\mathrm{SU}(r)$ at level $1$. Precisely, for the moduli space of  Euler characteristics of rank $r$ instantons on the blow-up of an algebraic surface along a closed point, we construct the affine $\mathrm{gl}_r$-action on various cohomology theories, including the Grothendieck group of coherent sheaves, Hochschild homology groups, Chow groups and Hodge cohomology groups, and identifying the module as a basic representation.

A key ingredient in our proof is a representation-theoretic reformulation of the theory of Grassmannians of Tor-amplitude $[0,1]$-perfect complexes studied by the first-named author in terms of the spin representation of the finite-dimensional Clifford algebra. This may be viewed as a finite analog of the question of Vafa--Witten via the Boson--Fermion correspondence.
\end{abstract} 
\keywords{Moduli of sheaves, blow-up formula, affine Lie algebra, conformal field theory, Grassmannian of perfect complexes, derived algebraic geometry}
\maketitle
 
\section{Introduction}
 Nakajima \cite{10.1215/S0012-7094-94-07613-8} invented Nakajima quiver varieties and initiated the study of the relation between the moduli space of instantons and vertex operator algebras (VOA for short). Vafa--Witten \cite{Vafa-Witten} tested S-duality (Montonen--Olive duality) via mathematical works like \cite{Mukai,Nakashima,Qin,Yoshioka1995,Yoshioka1996,Klyachko,10.1215/S0012-7094-94-07613-8}, and
found unexpected links between supersymmetric Yang-Mills theory and 2-dimensional conformal field theory (2d CFT for short). In this paper, we confirm a major prediction made by \cite{Vafa-Witten} that relates Yoshioka's blow-up formula to the WZW models of $\mathrm{SU}(r)$ at level $1$.

\subsection{S-duality and modularity} To explain the main results of the paper, we first recall the S-duality conjecture from a mathematical perspective, following \cite{Sduality}. The idea of S-duality can be traced back to Maxwell's equations, which exhibit symmetry under swapping electric and magnetic fields. In the non-abelian case, it was generalized by Montonen--Olive \cite{MontonenOlive} in the $N=4$ supersymmetric gauge theory and is called Montonen--Olive duality, or the S-duality conjecture. In \cite{Vafa-Witten}, Vafa--Witten studied the partition functions of the 4-dimensional $N=4$ supersymmetric theory for a compact 4-dimensional manifold $S$ with gauge group $\mathrm{G}$. When $\mathrm{G}=\mathrm{SU}(r)$ and $S$ is a smooth projective surface with some vanishing conditions, such as
\begin{align}
  \label{assumptionS} 
  & K_S\cong \mc{O}_S \text{ or } K_S\cdot H<0
  \end{align}
which are always satisfied when $S$ is a $K3$ surface, an abelian surface, or a del Pezzo surface, the partition function is the generating function of the Euler characteristics of the moduli space of $H$-stable coherent sheaves on $S$ with rank $r$ and fixed first Chern class\footnote{In general, the partition function is called the Vafa--Witten invariant and a rigorous mathematical definition was proposed by Tanaka--Thomas \cite{Tanaka1,Tanaka2}}. Let $\mathrm{Z}(S,\mathrm{G},\tau)$ be the partition function, with $q=e^{2\pi i \tau}$, where $q$ is the formal variable in the generating function. The S-duality conjecture asserts that 
\begin{equation*}
  \mathrm{Z}(S,\mathrm{G}, \tau)=\pm r^{\chi(S)/2}(\frac{\tau}{i})^{w/2}\mathrm{Z}(S, ^L\mathrm{G}, -1/\tau),
\end{equation*}
where $w$ is a constant that depends only on $S$, $\chi(S)$ is the topological Euler number of $S$, and $^L\mathrm{G}$ is the Langlands dual group of $\mathrm{G}$. When $\mathrm{G}=\mathrm{SU}(r)$, we have $^L\mathrm{G}=\mathrm{SU}(r)/\mu_r$, where $\mu_r$ is the group of the $r$-th root of unity. Then  S-duality conjecture reveals the modularity of the partition function, which suggests an unexpected relation to 2d CFT. 

In particular, Vafa--Witten made several tests based on the mathematical work \cite{Mukai,Nakashima,Qin,Yoshioka1995,Yoshioka1996,Klyachko,10.1215/S0012-7094-94-07613-8}. When $S$ is a K3 surface, the formal series generated by the Euler characteristic is the partition function of the bosonic string by Mukai \cite{Mukai} and Göttsche \cite{gottsche1990betti}, and Vafa--Witten suggested a relation with the Fock space. When $S=\mb{CP}^2$, the partition function is modular only after a natural non-holomorphic modification due to Yoshioka \cite{Yoshioka1995,Yoshioka1996}, and Vafa--Witten suggested that such a modification should be related to a holomorphic anomaly in certain two-dimensional models. When $S$ is an ALE space, Nakajima \cite{10.1215/S0012-7094-94-07613-8} constructed the action of the Kac-Moody algebra on the equivariant cohomology groups of the moduli space of framed sheaves on $S$, through the novel construction of the Hecke correspondences on the Nakajima quiver varieties. Vafa--Witten raised the question to explain Nakajima's work in the framework of S-duality, as ALE spaces are not compact.

In the past decades, there have been substantial advances on the above questions, except the following question which relates the blow-up formula of Yoshioka with the WZW model of $\mathrm{SU}(r)$ at level $1$: when $S$ is a smooth projective surface, let $\hat{S}$ be the blow-up of $S$ along a closed point $o$. Given a total Chern class $c$, let $\mc{M}_H(c)$ and $\mc{M}_{H_{\infty}}(\hat{c})$ be the moduli spaces of Gieseker-stable sheaves on $S$ and $\hat{S}$, respectively (the detailed definition will be given in \cref{sec:1.2}).
   In \cite{Yoshioka1995,Yoshioka1996}, Yoshioka proved a blow-up formula for the Betti numbers of $\mc{M}_{H}(c)$ and $\mc{M}_{H_{\infty}}(\hat{c})$ when the rank is $2$, and if we only consider the Euler characteristics, the formula can be written as
  \begin{equation}
    \label{eq:Yoshioka}
    \frac{\sum_{n}\chi(\mathcal{M}_{H_\infty}(2,p^*c_1-a[C],n))q^{n+a/4}}{\sum_{n}\chi(\mathcal{M}_{H}(2,c_1,n))q^{n}}=\frac{\theta_a(q)}{\eta(q)^2}, \quad a=0,1,
  \end{equation}
   where $p:\hat{S}\to S$ is the projection map, $[C]$ is the exceptional divisor, and 
   \begin{equation*}
    \theta_a(q):=\sum_{n\in \mathbb{Z}+a/2}q^{n^2}, \text{ and } \eta(q):=q^{1/24}\prod_{n\geq 1}(1-q^n)
   \end{equation*}
  are the Jacobi theta function and the Dedekind eta function, respectively. Vafa--Witten \cite{Vafa-Witten} observed that the formula is related to the character of the WZW model of $\mathrm{SU}(2)$ at level $1$, and highlighted the unresolved mystery of the appearance of a 2d CFT in this context and its generalization to higher-rank cases.

  In this paper, we give a full answer to this question by constructing the affine $\mathrm{gl}_r$-action on various cohomology theories of $\mc{M}_{H_\infty}(\hat{c})$ and identifying the module as a basic representation. The detailed statement of the main results will be given in \cref{sec:1.2}.

\subsection{The main results of the paper}
\label{sec:1.2}
  Given an ample line bundle $H$ and a total Chern class $c=(r,c_1,n)$, let $\mc{M}_H(c)$ be the moduli space of Gieseker-stable sheaves on $S$ with total Chern class $c$. For any $1\gg \epsilon>0$, the $\mb{Q}$-divisor $p^*H-\epsilon C$ is also ample, and the moduli space $\mc{M}_{p^*H-\epsilon C}(\hat{c})$ stabilizes as $\epsilon\to 0$ for any given total Chern class $\hat{c}$ on $\hat{S}$; we denote it by $\mc{M}_{H_\infty}(\hat{c})$. Yoshioka's blow-up formula \cref{eq:Yoshioka} was generalized by the second-named author and Qin \cite{Li1999,li1998blowup} to virtual Hodge numbers, where the moduli space may be singular, and by Nakajima--Yoshioka \cite{nakajima2003lectures,1050282810787470592} to arbitrary ranks. For example, for the rank $2$ case we have
		\begin{equation*}
			\frac{\sum_{n}h^{i,j}(\mathcal{M}_{H_\infty}(2,p^*c_1-a[C],n))x^iy^jq^{n+a/4}}{\sum_{n}h^{i,j}(\mathcal{M}_{H}(2,c_1,n))x^iy^jq^{n}} = q^{1/12}Z_a(x,y,q), 
		\end{equation*}
		where $a=0,1$, $h^{i,j}$ is the (virtual) Hodge number and
		\begin{equation}
      \label{theta-fun}
			Z_a(x,y,q)=\frac{\sum_{n\in \mathbb{Z}}(xy)^{\frac{(2n+a)^2-(2n+a)}{2}}q^{(2n+a)^2/4}}{q^{1/12}\prod_{n\geq 1}(1-(xy)^{2n}q^n)^2},
		\end{equation} 
    so that $Z_a(x,1/x,q)=\frac{\theta_a(q)}{\eta(q)^2}$. Nakajima--Yoshioka \cite{nakajima2001moduli} studied the  equivariant cohomology of the moduli space of framed sheaves over $\mb{P}^2$, and constructed a basic representation of the affine Lie algebra $\hat{\mathrm{gl}}_r$ on the blow-up through localization. It provided a rational CFT construction for $S=\mathbb{P}^2$, as there is a correspondence between the WZW model and the basic representation through the affine vertex algebra (we refer to \cite{YellowBook} for details of this correspondence). For the general case, including unframed moduli spaces of stable coherent sheaves on arbitrary surfaces and non-equivariant cohomology, the problem remained open.

In our paper, we give a full answer to this question by providing a geometric construction of this affine Lie algebra $\hat{\mathrm{gl}}_r$ action for any surface $S$ and many different cohomology theories. All the algebras in our paper are defined over $\mb{Z}$. The basic representation of $\hat{\mathrm{gl}}_r$, which is denoted by $\mf{F}(r)$, is defined through the Boson--Fermion correspondence (see \cref{thm:fermion-boson} for the details) and decomposes as the direct sum of irreducible representations $\mf{F}(r)_{l,\bullet}$, where $l$ is the central charge of the representation. It is periodic, which means that $\mf{F}(r)_{l,\bullet}\cong \mf{F}(r)_{l+r,\bullet}$ for any $l\in\mb{Z}$. Let $\mb{H}^*$ be one of the (co)homology theories, including the Grothendieck group of coherent sheaves, the Hochschild homology group, the Chow group, and the Hodge cohomology group.
\begin{theorem}[\cref{main1}] \label{thm:1.1} Fix $r$ and $c_1$ such that $\Gcd(r,c_1\cdot H)=1$. Then for any $l\in\mb{Z}$, we have an identification of $\hat{\mathrm{gl}}_r$-representations:
  \begin{equation}
    \label{eq:1.1111}
    \bigoplus_{n\in \mb{Z}}\mb{H}^*(\mc{M}_{H_\infty}(r,p^*c_1+l[C],n-\frac{l(l+1)}{2}))\cong \bigoplus_{n\in \mb{Z}}\mb{H}^*(\mc{M}_{H}(r,c_1,n))\otimes \mf{F}(r)_{l,\bullet}.
  \end{equation} 
  Here,  the assumption \cref{assumptionS} will be needed for the Hochschild homology group, the Chow group, and the Hodge cohomology group.
\end{theorem}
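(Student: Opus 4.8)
The plan is to realize the left-hand side of \cref{eq:1.1111} — summed over \emph{all} $l\in\mb{Z}$ as well — as one $\hat{\mathrm{gl}}_r$-module built from ``Hecke modifications along the exceptional curve $C$'', and then to pin down its structure by feeding the finite-Clifford-algebra reformulation of the Grassmannian-of-$[0,1]$-complexes theory into the Boson--Fermion correspondence. \textbf{Step 1: fine moduli and a modification tower.} First I would use $\Gcd(r,c_1\cdot H)=1$ to guarantee that $H$- and $H_\infty$-semistability coincide with stability in the relevant classes, so that every moduli space in sight is fine and carries a universal sheaf (and, granting \cref{assumptionS}, is smooth, which is what is needed for the Hochschild homology, Chow, and Hodge cohomology versions). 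The geometric backbone is that an $H_\infty$-stable sheaf $E$ on $\hat S$ differs from the pullback of a torsion-free sheaf on $S$ only along $C\cong\mb{P}^1$, and that changing the coefficient $l$ of $[C]$ by one (while adjusting $n$) is effected by elementary modifications $0\to E'\to E\to Q\to 0$ with $Q$ a line bundle on $C$. Performing all such modifications universally over $\mc{M}_H(r,c_1,\bullet)$ exhibits
\[\bigsqcup_{l,\,n}\mc{M}_{H_\infty}\big(r,\,p^*c_1+l[C],\,n\big)\]
as a tower whose steps are Grassmannians of Tor-amplitude $[0,1]$-perfect complexes over $\mc{M}_H(r,c_1,\bullet)$.

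\textbf{Step 2: Clifford action, assembly, and passage to $\hat{\mathrm{gl}}_r$.} Next I would invoke the reformulation recalled in the introduction: the $\mb{H}^*$ of one such Grassmannian step over a base $X$ is canonically $\mb{H}^*(X)$ tensored with the spin module of a finite-dimensional Clifford algebra, with the fermionic creation/annihilation operators being the modification correspondences themselves. Taking the inductive limit along the tower of Step 1, the finite spin modules glue to $\mf{F}(r)$ and the finite Clifford algebras to the infinite Clifford algebra, so $\bigoplus_{l,n}\mb{H}^*\big(\mc{M}_{H_\infty}(r,p^*c_1+l[C],n)\big)$ acquires an action of the infinite Clifford algebra with underlying graded vector space $\big(\bigoplus_n\mb{H}^*(\mc{M}_H(r,c_1,n))\big)\otimes\mf{F}(r)$, the charge decomposition $\mf{F}(r)=\bigoplus_l\mf{F}(r)_{l,\bullet}$ matching the $l$-grading on the left. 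Via the Boson--Fermion correspondence, that is, the Frenkel--Kac-type construction of \cref{thm:fermion-boson}, this Clifford action is promoted to the $\hat{\mathrm{gl}}_r$-action; since the $\hat{\mathrm{gl}}_r$-operators preserve the charge, restricting to a fixed $l$ produces a $\hat{\mathrm{gl}}_r$-module structure on $\bigoplus_n\mb{H}^*\big(\mc{M}_{H_\infty}(r,p^*c_1+l[C],n)\big)$ whose underlying space is $\big(\bigoplus_n\mb{H}^*(\mc{M}_H(r,c_1,n))\big)\otimes\mf{F}(r)_{l,\bullet}$.

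\textbf{Step 3: identification and bookkeeping.} To match the two sides as $\hat{\mathrm{gl}}_r$-modules I would exhibit the minimal-charge moduli spaces in each $l$-sector — the $\mc{M}_{H_\infty}(r,p^*c_1+l[C],\cdot)$ of smallest $n$ for which they are nonempty — as a copy of $\bigsqcup_n\mc{M}_H(r,c_1,n)$, obtained from $\mc{M}_H(r,c_1,n)$ by pulling back along $p$ and performing a fixed elementary modification along $C$; their $\mb{H}^*$ is then the $\hat{\mathrm{gl}}_r$-highest weight subspace $\bigoplus_n\mb{H}^*(\mc{M}_H(r,c_1,n))\otimes(\text{highest weight line of }\mf{F}(r)_{l,\bullet})$. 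Since $\mf{F}(r)_{l,\bullet}$ is irreducible over $\hat{\mathrm{gl}}_r$ (as noted in the introduction), the submodule generated by these vectors is all of $\big(\bigoplus_n\mb{H}^*(\mc{M}_H(r,c_1,n))\big)\otimes\mf{F}(r)_{l,\bullet}$, and a comparison of graded dimensions — provided by the tower of Step 1, which is Yoshioka's blow-up formula — shows it exhausts the left-hand side of \cref{eq:1.1111}. Finally, the shift $n\mapsto n-\tfrac{l(l+1)}{2}$ is the Grothendieck--Riemann--Roch correction for the twist by $l[C]$ with $[C]^2=-1$, which aligns the $n$-grading of the blow-up moduli spaces with the energy grading $\bullet$ on $\mf{F}(r)_{l,\bullet}$; and the periodicity $\mf{F}(r)_{l,\bullet}\cong\mf{F}(r)_{l+r,\bullet}$ corresponds to the isomorphism of moduli spaces $E\mapsto E\otimes\mc{O}_{\hat S}(C)$, which preserves $H_\infty$-stability and sends the $l$-sector to the $(l+r)$-sector.

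\textbf{The main obstacle} is Step 2: proving that the modification correspondences along $C$ satisfy the Clifford relations — and hence the affine relations — exactly, not merely modulo lower-order terms, which amounts to controlling the excess intersections among modifications carried out at different twist levels on $C$. Routing everything through Grassmannians of two-term complexes is what makes this tractable, since it reduces the infinite-dimensional relation-checking to a single finite spin-representation identity that can be verified once and then transported along the tower of Step 1; with $S$ arbitrary and the cohomology non-equivariant, the torus-localization arguments available for framed sheaves on $\mb{P}^2$ are not at our disposal, and this reduction is what takes their place.
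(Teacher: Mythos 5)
Your overall architecture (Grassmannians of Tor-amplitude $[0,1]$-complexes $\Rightarrow$ finite Clifford/spin module $\Rightarrow$ limit to the infinite Clifford algebra $\Rightarrow$ Boson--Fermion correspondence) is the paper's architecture, and you correctly flag that verifying the Clifford relations on the nose is a major technical burden. But there are two genuine gaps. First, Step 1 asserts that the spaces $\mc{M}_{H_\infty}(r,p^*c_1+l[C],\cdot)$ themselves assemble into a tower of Grassmannians over $\mc{M}_H(r,c_1,\cdot)$ via elementary modifications along $C$. This fails: a Hecke modification along the $(-1)$-curve $C$ does \emph{not} preserve $H_\infty$-stability (this is precisely the difficulty the paper singles out as the hard one, in contrast to modifications by $0$-dimensional sheaves). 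The paper's resolution is to replace $\mc{M}_{H_\infty}$ by Nakajima--Yoshioka's moduli $\mathrm{M}^0(l,n)$ of stable \emph{perverse} coherent sheaves, which genuinely are Grassmannians $\mathrm{Gr}_{\mc{M}_H(n)}(\mc{U}_o,-l)\cong\mathrm{Gr}_{\mc{M}_H(n-l)}(\mc{U}_o^\vee[1],-l)$ (\cref{thm:NY1}); the actual moduli $\mc{M}_{H_\infty}(l,n)$ are only recovered as the stabilized terms $\mathrm{M}^0(l-mr,n-ml+\tfrac{m(m-1)}{2}r)$ for $m\gg0$ (\cref{thm:NY2}). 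Your ``inductive limit along the tower'' in Step 2 is morally the paper's colimit functor $\mf{H}_\infty$ (realized there as a Morita equivalence between the extended algebra $\mc{E}(r)$ and the admissible modules of the infinite Clifford algebra), but without the perverse-coherent intermediary there is no tower to take a limit of.

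Second, Step 3's identification via highest-weight vectors plus ``a comparison of graded dimensions provided by Yoshioka's blow-up formula'' does not work as stated. It is circular in spirit (the theorem is meant to \emph{explain} the blow-up formula rather than consume it), and, more fatally, a dimension count is unavailable for the Grothendieck group and the Chow group over $\mathbb{Z}$, which are not finitely generated in each graded piece; even granting irreducibility of $\mf{F}(r)_{l,\bullet}$, equality of graded dimensions between a submodule and the ambient module is not a usable criterion there. There is also no single ``minimal-charge'' copy of $\bigsqcup_n\mc{M}_H(r,c_1,n)$ sitting inside each $l$-sector of the blown-up moduli in the way you describe. The paper avoids all of this with a purely algebraic structure theorem (\cref{prop:big}): any \emph{bounded bigraded} $\mc{E}(r)$-representation $V$ satisfies $\mf{H}_\infty(V)\cong V_{0,\bullet}\otimes\mf{F}(r)$ together with the parabolic stabilization $\mf{H}_\infty(V)_{l,n}\cong V_{l-rm,\,n-ml+\frac{1}{2}m(m-1)r}$ for $m\gg0$; applied to $V_{l,n}=\mb{H}^*(\mathrm{M}^0(l,n))$ and combined with \cref{thm:NY2}, this yields both sides of \cref{eq:1.1111} simultaneously with no dimension input. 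You would need to replace your Step 3 by such a rigidity statement for the algebra action.
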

Our $\hat{\mathrm{gl}}_r$ acts trivially on the factor $\bigoplus_{n\in \mb{Z}}\mb{H}^*(\mc{M}_{H}(r,c_1,n))$ on the right-hand side, which is one of the main differences from the previous work \cite{nakajima1997heisenberg,neguct2018hecke}. For the Hochschild homology group, the action preserves the homological degrees, which explains why $Z_a(x,1/x,q)$ in \cref{theta-fun} is independent of $x$.

\subsection{Main difficulties and outline of the proof of \cref{thm:1.1}}
\label{sec:1.3}
There are several essential difficulties in constructing the actions like  \cref{eq:1.1111}. A direct Hecke-correspondence construction is tempting but fails for a basic reason: the Hecke modification along the exceptional curve,
\begin{equation}
  \label{eq:Hecke}
    0\to \mc{E}_0\to \mc{E}_1\to \mc{O}_C\to 0,
\end{equation} 
changes the first Chern class, whereas the $\hat{\mathrm{gl}}_r$-action preserves it. Moreover, such a modification may change the stability of $\mc{E}_0$ and $\mc{E}_1$, making the correspondences not well-defined. Thus, unlike the constructions in \cite{nakajima1997heisenberg,neguct2018hecke}, the correspondence in \cref{eq:Hecke} does not define an operator on a fixed moduli space of stable sheaves on $\hat{S}$. 
These difficulties were already visible in the early approaches to this problem \cite{Li-Qin,Li-Qin-2,nakajima2001moduli}. Our proof resolves them through the following steps.

\medskip
\noindent\textbf{Step 1. A finite Clifford--Grassmannian model.}
The first ingredient is a finite Clifford--Grassmannian model, which is of independent interest. Recall that a perfect complex $\mc{E}$ of Tor-amplitude $[0,1]$ on a scheme $X$ is one that is locally represented by a morphism of finite locally free sheaves,
\[
  \mc{E}\cong \{V\xrightarrow{s} W\},
\]
and its rank is $\mathrm{rank}(W)-\mathrm{rank}(V)$. The shifted dual $\mc{E}^{\vee}[1]$ is again of Tor-amplitude $[0,1]$, locally represented by the dual morphism $s^\vee\colon W^\vee \to V^\vee$. 

Following Grothendieck's convention, we consider the moduli space $\mathrm{Gr}_X(\mc{E},d)$ parametrizing quotients $h^0(\mc{E})\twoheadrightarrow Q$, where $Q$ is a locally free sheaf of rank $d$.

For a perfect complex $\mc{E}$ of Tor-amplitude $[0,1]$ and rank $r>0$, we prove an isomorphism of graded $\mathrm{Cl}(r)$-representations,
\begin{equation}
\label{eq:twice}
    \bigoplus_{d \in \mathbb{Z}} \mb{H}^*(\mathrm{Gr}_X(\mathcal{E}, d))
    \cong
    \bigoplus_{d \in \mathbb{Z}} \mb{H}^*(\mathrm{Gr}_X(\mathcal{E}^{\vee}[1], d)) \otimes \mathrm{F}(r).
\end{equation}
Here $\mathrm{F}(r)=\bigoplus_{i=0}^{r}\wedge^i\mathbb{Z}^r$, and $\mathrm{Cl}(r)$ (defined in \cref{sec:cli}) is the finite Clifford algebra generated by contraction and wedge operators
\[
  p_i=\frac{\partial}{\partial v_i},\qquad q_i=v_i\wedge -,\qquad 0\leq i\leq r-1.
\]
We refer to \cref{thm:1}, \cref{thm:4.4}, and \cref{thm:hos} for the detailed formulations. The operators $g_{ij}=p_iq_j$ give the standard $\mathrm{gl}_r$-action on $\mathrm{F}(r)$. Thus \cref{eq:twice} provides a finite, Grassmannian analogue of the Boson--Fermion correspondence. 

This construction is new even when $\mc{E}$ is a vector bundle of rank $r$, in which case it gives an explicit identification, through Clifford algebra representations, of the cohomology of Grassmannian bundles with the Grassmann algebra:
\[
    \bigoplus_{d \in \mathbb{Z}} \mb{H}^*(\mathrm{Gr}_X(\mathcal{E}, d))
    \cong
    \mb{H}^*(X)\otimes \bigoplus_{i=0}^{r}\wedge^i\mathbb{Z}^r.
\]

The proof of \cref{eq:twice} uses semiorthogonal decompositions in a representation-theoretic way: a semiorthogonal decomposition is interpreted as a categorical representation of an associative algebra. 
We refer to \cref{lem:515,prop:4} for more details.

\medskip
\noindent\textbf{Step 2. Perverse coherent sheaves and the finite type algebra $\mc{E}(r)$.} 
Following Bridgeland \cite{Bridgeland2007} and Nakajima--Yoshioka \cite{1050282810787470592}, we consider the moduli space $\mathrm{M}^m(l,n)$ of $m$-stable perverse coherent sheaves on $\hat S$ with total Chern class
\begin{equation*}
      \left(r,p^*c_1+l[C],n-\frac{l(l+1)}{2} \right)\in \mb{Z}_{>0}\times \mathrm{NS}(\hat{S})\times \mb{Z}.
\end{equation*}
For fixed $l,n$, $\mathrm{M}^m(l,n)$ and $\mathrm{M}^{m+1}(l,n)$ are related by wall-crossing, while for $m\gg 0$, one recovers  the moduli space of stable sheaves on the blow-up:
\[
  \mathrm{M}^{m}(l,n)\cong \mc{M}_{H_\infty}(l,n): = 
  \mc{M}_{H_\infty}\bigl(r,p^*c_1+l[C],n-\frac{l(l+1)}{2}\bigr),
\]
where the right-hand side denotes the moduli space of $(p^*H-\epsilon C)$-stable sheaves on $\hat{S}$ for $0 < \epsilon \ll 1$.

Wall-crossing alone, however, does not suffice to reveal the representation-theoretic structure of these moduli spaces, and an essentially new approach is required: we first construct
the correspondences on $\mathrm{M}^0(l,n)$ for all $l,n\in \mb{Z}$, and then propagate the resulting representations to the $\mc{M}_{H_\infty}(l,n)$ using the  limit property and the self-similarity 
$ \mathrm{M}^{m}(l,n)\cong \mathrm{M}^{m-1}(l-r,n-l)$
induced by tensoring with $\mc{O}_{\hat S}(-C)$.

To construct the correspondences on $\mathrm{M}^0(l,n)$, we first observe that these moduli spaces admit an explicit Grassmannian description. By \cite{jiang2022grassmanian} and \cite{1050282810787470592}, one has
\begin{equation}
  \label{eq:grassmannian}
  \mathrm{M}^0(l,n)\cong\mathrm{Gr}_{\mc{M}_{H}(r,c_1,n)}(\mc{U}_{o},-l), 
  \quad
  \mathrm{M}^0(l,n)\cong \mathrm{Gr}_{\mc{M}_{H}(r,c_1,n-l)}(\mc{U}_{o}^{\vee}[1],-l),
\end{equation}
where $\mc{U}_o$ is the restriction of the universal sheaf to
$\mc{M}_{H}(r,c_1,n)\times\{o\}$. 
These are precisely the two presentations appearing in the finite Clifford--Grassmannian model of Step 1, so applying \cref{eq:twice} yields an isomorphism 
\begin{equation*}
  \bigoplus_{l\in\mb{Z}} \bigoplus_{n\in \mb{Z}}\mb{H}^*(\mathrm{M}^0(l,n))\cong \bigoplus_{l\in\mb{Z}} \bigoplus_{n\in \mb{Z}}\mb{H}^*(\mathrm{M}^0(l,n)) \otimes \mathrm{F}(r).
\end{equation*}
Moreover, the embedding of the degree-$0$ vector $1\in \mathrm{F}(r)$ produces two additional operators $e$ and $f$ on $\bigoplus_{l,n\in\mb{Z}}\mb{H}^*(\mathrm{M}^0(l,n))$. Together with the Clifford generators $p_i,q_i$ of $\mathrm{Cl}(r)$, these operators generate a larger algebra $\mc{E}(r)$, with relations $p_ie=0$ and $fq_i=0$ for all $0\leq i\leq r-1$, and $fe=1$ and $ef=\prod_{j=0}^{r-1}p_jq_j$.  


\medskip
\noindent\textbf{Step 3. Morita equivalence, stable colimits, and the infinite Clifford algebra $\mf{E}(r)$.}
The passage from $\mc{E}(r)$-representations to representations of the infinite Clifford algebra is achieved through a Morita equivalence.

Let $\mf{E}(r)$ (defined in \cref{sec:666}) be the infinite Clifford algebra generated by $P_{a,i}, Q_{a,i}$, where $a\in \mb{Z}, 0 \le i \le r-1$, with relations 
\begin{equation*}
      \{P_{a,i}, P_{b,j}\}=0,\qquad
      \{Q_{a,i}, Q_{b,j}\}=0,\qquad
      \{P_{a,i}, Q_{b,j}\}=\delta_{ij}\delta_{ab},
\end{equation*}
and an extra invertible shift operator $E$ satisfying
\[
  EP_{a,i}=P_{a+1,i}E,\qquad EQ_{a,i}=Q_{a+1,i}E.
\]
The usual Fermionic Fock representations $V$ are characterized by a two-sided admissibility condition: for any $v\in V$, we have $P_{a,i}v=0$ and $Q_{-a,i}v=0$ for all $a\gg 0$. 
For the geometric applications considered here, however, one needs a weaker
one-sided (right-admissible) condition: for every vector $v$, one only requires
\[
  P_{a,i}v=0 \qquad \text{for all } a\gg 0.
\]
Perhaps unexpectedly, our algebraic result \cref{cor:equivalence1} establishes that the category of such representations of the infinite Clifford algebra $\mf{E}(r)$ is Morita equivalent to the category of representations of the finitely generated algebra $\mc{E}(r)$.

In \cref{prop:limit},  we prove that under this Morita equivalence, the image of the $\mc{E}(r)$-module $\bigoplus_{l,n}\mb{H}^*(\mathrm{M}^0(l,n))$ admits two equivalent descriptions as an $\mf{E}(r)$-module: it is isomorphic both to $\bigoplus_{l,n}\mb{H}^*(\mc{M}_{H_\infty}(l,n))$, and $\bigoplus_{n} \mb{H}^*(\mc{M}_{H}(r,c_1,n))\otimes \mf{F}(r)$, respectively, where $\mf{F}(r)$ is the Fermionic Fock space.


\medskip
\noindent\textbf{Step 4. The Boson--Fermion correspondence and the $\hat{\mathrm{gl}}_r$-representation.}
The Boson--Fermion correspondence of I. Frenkel \cite{FRENKEL1981259} identifies the Fermionic Fock space $\mf{F}(r)$ with the basic representation of the affine Lie algebra $\hat{\mathrm{gl}}_r$.  Concretely, the affine generators are obtained from the normal-ordered quadratic fermions
\[
:Q_{m,i}P_{n,j}:
\]
Transporting, via this correspondence, the $\mf{E}(r)$-representations of the previous step yields the desired $\hat{\mathrm{gl}}_r$-action on the cohomology of the blow-up moduli spaces. Moreover, restricting to the central charge-$l$ summand gives precisely
\[
  \bigoplus_{n\in \mb{Z}} \mb{H}^*(\mc{M}_{H_\infty}(l,n))
  \cong
  \left(
  \bigoplus_{n\in \mb{Z}}\mb{H}^*(\mc{M}_{H}(r,c_1,n))
  \right)\otimes \mf{F}(r)_{l,\bullet},
\]
which is \cref{eq:1.1111}. 
This completes the conceptual route of our proof of \cref{thm:1.1}.

\bigskip
In this paper, the interplays between algebra and geometry are paramount. Special geometric features of the moduli spaces dictate what kind of algebra is involved. Consequently, three types of algebras appear
sequentially, with one being an extension of the previous one. Representation-theoretic results of these algebras, in turn, give the anticipated topological properties of moduli spaces.

\subsection{The comparison with the previous work}

\label{sec:1.7}
Since \cite{10.1215/S0012-7094-94-07613-8,Vafa-Witten}, substantial advances have been made by both mathematicians and physicists towards understanding the relationship between the moduli space of instantons and the VOA. A pivotal first step was taken by Nakajima \cite{nakajima1997heisenberg} and Grojnowski \cite{grojnowski1996instantons}, who proved that the cohomology groups of the Hilbert schemes of points on an algebraic surface are the Fock space of a (super)-Heisenberg algebra.  Lehn \cite{Lehn} constructed the Virasoro algebra, and the second-named author, Qin and Wang \cite{Li-Qin-Wang} constructed the $W_{1+\infty}$-algebra action on the cohomology groups of the Hilbert schemes of points on an algebraic surface.

The breakthrough on quiver varieties was also obtained by Nakajima \cite{10.1215/S0012-7094-98-09120-7,10.2307/2646205}, where he constructed the affine Kac-Moody algebra and quantum loop algebra actions on the equivariant cohomology groups and Grothendieck groups of coherent sheaves over the Nakajima quiver varieties. Varagnolo constructed the Yangian action on the equivariant cohomology group of quiver varieties \cite{Varagnolo1999}.

The Hilbert scheme of points is also the moduli space of rank $1$ stable coherent sheaves on a surface. For the higher rank generalization, one of the most important advances is the Alday--Gaiotto--Tachikawa correspondence \cite{Alday2010}, which asserts that the equivariant cohomology of the moduli space of framed sheaves on $\mathbb{P}^2$ is the Verma module of the $W$-algebra. It was established by Schiffmann--Vasserot \cite{MR3150250}, Maulik--Okounkov \cite{MR3951025}, and Braverman--Finkelberg--Nakajima \cite{braverman2016instanton}. A $\mathrm{K}$-theoretic generalization for any algebraic surface was developed by Negu\cb{t} \cite{neguct2018hecke}.

When the assumption \cref{assumptionS} is not satisfied, the partition function will not be the same as the generating function of the Euler characteristics of the moduli space of stable coherent sheaves, as a new contribution from the ''monopole branch'' will appear. When $S$ is a projective algebraic surface, such a definition is given by Tanaka and Thomas \cite{Tanaka1,Tanaka2} using the moduli space of stable Higgs pairs. The representation theoretic explanation for the monopole branch is an interesting question, and we will pursue it in our future work.

 In addition to the above novel breakthroughs, several works also directly inspired our approach. The first work is by Koseki \cite{koseki2021categorical}, where he constructed an SOD of the derived category $\mathrm{D}^b_{coh}(\mc{M}_{H_\infty}(\hat{c}))$ using a theorem of Toda \cite{MR4608555}, under a smoothness assumption. When the third-named author was working at IPMU, Nakajima suggested that the third-named author consider its relation to representation theory, which became the starting point of this paper. The results in \cite{koseki2021categorical} are not immediately applicable to representation theory, as the explicit Fourier--Mukai kernels by the second-named author in \cite{jiang2023derived} are essential for the proof of \cref{thm:1.1}, which not provided in \cite{koseki2021categorical,MR4608555}.

 The second work is by Bershtein--Feigin--Litvinov \cite{Bershtein2016}, where they explained (part of) the rank $2$ Nakajima--Yoshioka blow-up equations \cite{NYblowup} on $S=\mb{P}^2$ as an isomorphism of conformal blocks, using a different approach from ours. Their vertex algebra is the same as ours, but the conformal vector is different. To compare the different conformal vectors, one has to study the variation of Donaldson invariants under the geometric correspondences, which will be pursued in our future work.

 The third work is by Diaconescu--Porta--Sala--Schiffmann--Vasserot \cite{Diaconescu2025,Diaconescu2026}, where they started the study of the cohomological Hall algebra of the moduli stack of rank $1$ coherent sheaves on a surface, and moreover related it to Yangians. While our work also involves the Hecke modification along the $(-1)$-curve $C$, our approach is different from theirs: the operators $e,f$ in our paper are defined through the two Grassmannian presentations of $\mathrm{M}^0(l,n)$, which cannot be directly obtained from the Hecke correspondences. Moreover, the limit property of $\mathrm{M}^m(l,n)$ is an essential ingredient for the proof of \cref{thm:1.1}, which also cannot be obtained solely from the Hecke correspondences.

 Finally, we would like to mention the work by the third-named author \cite{zhao2024hilbert}, where he studied a toy model of the Hilbert scheme of points on the blown-up surface. Compared to \cite{zhao2024hilbert}, \cref{thm:1.1} has physical applications due to its connections with the WZW model and  S-duality.  From a technical point of view, the proof of \cref{thm:1.1} is also significantly more involved than that in \cite{zhao2024hilbert} for two reasons: the Clifford algebras that act in the rank $r$ case are more complicated than the ones that act in the rank $1$ case, and the geometry of the correspondences is more complicated than that of \cite{zhao2024hilbert}, which we have discussed in \cref{sec:1.3}.
\subsection{The organization of the paper and some notation} In \cref{sec:gra}, we recall the moduli space of stable perverse coherent sheaves of Nakajima--Yoshioka \cite{1050282810787470592}. The most technical part of our paper is proving \cref{eq:twice} and its cohomological variants, which take up three sections from \cref{sec:cli} to \cref{sec:derived}. From \cref{sec:666} to \cref{sec:fermion}, we construct the $\mc{E}(r)$-action on the algebraic $\mathrm{K}$-theory/cohomology of the moduli space of stable perverse coherent sheaves on the blow-up. We also define the functor $\mf{H}_{\infty}$ and prove that $\mf{H}_{\infty}(\bigoplus_{l,n}\mb{H}^*(\mathrm{M}^0(l,n)))$ has two realizations in terms of the cohomology groups of the moduli space of coherent sheaves on blow-ups and the Fermionic Fock space, respectively. In \cref{sec:main}, we recall the Boson--Fermion correspondence and prove the main theorem.

 Let $R$ be a unital associative algebra. We denote the opposite ring of $R$ by $R^{op}$ and refer to a left $R$-module as an $R$-representation. Moreover, given $x,y\in R$, we write $[x,y]:=xy-yx$ and $\{x,y\}:=xy+yx$. Given a finite family of elements $\{a_i\}_{i\in S}$ in a unital ring $R$, if $S$ has a total order $P$, we sort all $a_i$ as $a_{i_1},a_{i_2},\ldots,a_{i_{|S|}}$ such that $i_1<_P i_2<_P \cdots <_P i_{|S|}$, and
  define the ordered product
  \begin{equation*}
    \prod_{i\in S}^{P}a_i:=\prod_{j=1}^{|S|}a_{i_j}.
  \end{equation*}

  Given any two elements $a,b$ in a set $S$, we use the Kronecker delta symbol $\delta_{a,b}=1$ if $a=b$ and $\delta_{a,b}=0$ if $a\neq b$. 

  As in \cref{thm:1.1}, we use $\mb{H}^*$ to denote one of the (co)homology theories, including the Grothendieck group of coherent sheaves, the Hochschild homology group, the Chow group, and the Hodge cohomology group.
  As in \cref{eq:twice}, we denote the moduli space of $d$-dimensional quotient spaces of $\mathbb{C}^r$ by $\mathrm{Gr}(r,d)$.

 \subsection{Acknowledgments}
As discussed in \cref{sec:1.7},
we learned of this question through discussions with Hiraku Nakajima. We would like to thank him for suggesting this problem and for his valuable guidance throughout our work in this field.
   Y.Z. would like to thank Mikhail Bershtein, Bin Gui, Henry Liu, Andrei Negu\c{t},  and Xin Sun for many helpful discussions and the Bernoulli Center, EPFL for the invitation to the conference "Representations, Moduli and Duality". Q. J. would also like to thank Naoki Koseki and Sasha Minets for helpful conversations.
   W.P. Li is supported by GRF grants 16305125 and 16306222. Q. J. was supported by the Hong Kong Research Grants Council ECS grant (Grant No. 26311724).
   
\section{Perverse coherent sheaves on the blow-ups} 
\label{sec:gra}
In this section, we briefly review the moduli space of stable perverse coherent sheaves on the blow-up surface constructed by Nakajima--Yoshioka \cite{1050282810787470592}.

Let $S$ be a smooth projective surface over $\mathbb{C}$, with an ample divisor $H$ and $(r,c_1)\in \mb{Z}_{>0}\times \mathrm{NS}(S)$ such that $\mathrm{gcd}(r,c_1\cdot H)=1$. Let $\mc{M}_{H}(n)$ be the moduli space of $H$-stable coherent sheaves on $S$ with total Chern class $(r,c_1,n)$, which is a projective scheme. The universal sheaf $\mc{U}$ over $\mc{M}_{H}(n) \times S$, regarded as a perfect complex, is a Tor-amplitude $[0,1]$-perfect complex of rank $r$ (we refer to \cite{huybrechts2010geometry} and \cite{neguct2018hecke} for references). We fix a closed point $o\in S$.
Let $\mc{U}_{o}:=\mc{U}|_{\mc{M}_{H}(n)\times \{o\}}$ be the restriction of $\mc{U}$ to the fiber $\mc{M}_{H}(n)\times \{o\}$. 

Let $\hat{S}:=\mathrm{Bl}_{o}S$ be the blow-up of $S$ at $o$ with the exceptional divisor $C$ and the projection map $p:\hat{S}\to S$. Then $\mathrm{NS}(\hat{S})=p^*\mathrm{NS}(S)\oplus \mathbb{Z}[C]$ where $[C]$ is the fundamental class of $C$ in $\hat{S}$. For any rational number $1\gg\epsilon>0$, $p^*H-\epsilon C$ is $\mathbb{Q}$-ample. Given $l,n\in \mb{Z}$, the moduli spaces of $(p^*H-\epsilon C)$-stable coherent sheaves on $\hat{S}$ with the total Chern class
\begin{equation}
  \label{eq:moduli}
  (r,p^*c_1+l[C],n-\frac{l(l+1)}{2})\in \mb{Z}_{>0}\times \mathrm{NS}(\hat{S})\times \mb{Z}
\end{equation}
are all isomorphic when $\epsilon$ is sufficiently small, which are denoted by $\mc{M}_{H_{\infty}}(l,n)$. 
The corresponding Chern character of \cref{eq:moduli} is $(r,p^*c_1+l[C],-n+\frac{l}{2}+\frac{c_1^2}{2})$. 

Now we recall the definition of the stable perverse coherent sheaf. For any $m\in \mathbb{Z}$, let $\mc{O}_C(m):=\mc{O}_{\hat{S}}(-mC)|_{C}$.
A coherent sheaf $E$ on $\hat{S}$ is called a stable perverse coherent sheaf if $p_{*}E$ is $H$-stable on $S$ with the vanishing conditions
    \begin{equation*}
      \operatorname{Hom}(E,\mc{O}_{C}(-1))=0,\quad \operatorname{Hom}(\mc{O}_{C},E)=0,
    \end{equation*}
and is called $m$-stable if $E\otimes \mc{O}(-mC)$ is a stable perverse coherent sheaf. Given $l,n\in \mathbb{Z}$, we define  $\mathrm{M}^m(l,n)$ as the moduli space of $m$-stable perverse coherent sheaves on $\hat{S}$ with the total Chern class \cref{eq:moduli}.
\begin{theorem}[Proposition 3.1 of \cite{kuhn2021blowup}, Lemma 3.4 and Theorem 4.1 of \cite{1050282810787470592}] \label{thm:NY1}
  Given $E$ in $\mathrm{M}^0(l,n)$ and $a=0,1$, the higher direct image $\mathrm{R}^{i}p_{*}E(aC) = 0$ for all $i>0$, and the pushforward $p_{*}E(aC)$ is $H$-stable. Moreover, let 
  \begin{align*}
   \zeta: \mathrm{M}^{0}(l,n) \to \mc{M}_{H}(n-l)
  \quad \eta: \mathrm{M}^{0}(l,n) \to \mc{M}_{H}(n)
  \end{align*}
  be the morphisms which map $E$ to $p_*E$ and $p_*(E(C))$ respectively. Then $\zeta$ and $\eta$ are identified with the natural projections from $\mathrm{Gr}_{\mc{M}_{H}(n-l)}(\mc{U}_{o}^{\vee}[1],-l)$ to $\mc{M}_{H}(n-l)$ and from $\mathrm{Gr}_{\mc{M}_{H}(n)}(\mc{U}_{o},-l)$ to $\mc{M}_{H}(n)$ respectively.
\end{theorem}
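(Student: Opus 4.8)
The plan is to prove the statement in three independent pieces: (i) the cohomological vanishing $\mathrm R^{>0}p_*E(aC)=0$; (ii) the $H$‑stability of $p_*E(aC)$; and (iii) the identification of $\zeta$ and $\eta$ with Grassmannian projections. Pieces (i) and (ii) are essentially \cite[Proposition 3.1]{kuhn2021blowup} and \cite[Theorem 4.1]{1050282810787470592}; piece (iii) is a translation of Nakajima--Yoshioka's Hecke‑modification description of stable perverse coherent sheaves into the language of Grassmannians of Tor‑amplitude $[0,1]$‑complexes of \cite{jiang2023derived}, and it is here that the real work lies.

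For (i), I would first note that $p$ is an isomorphism over $S\setminus\{o\}$ with $p^{-1}(o)=C\cong\mb{P}^1$, so $\mathrm R^{\ge 2}p_*=0$ and $\mathrm R^1p_*E(aC)$ is a skyscraper at $o$; it remains to kill it for $a=0,1$. The adjunction $\operatorname{Hom}_{\hat S}(E,i_{C*}\mc{O}_C(-1))=\operatorname{Hom}_C(i_C^*E,\mc{O}_C(-1))$ translates the defining condition $\operatorname{Hom}(E,\mc{O}_C(-1))=0$ into the statement that, writing $i_C^*E\cong\bigoplus_j\mc{O}_C(a_j)\oplus(\text{torsion})$, one has $a_j\ge 0$ for all $j$. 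Since $\mc{O}_{\hat S}(C)|_C=\mc{O}_C(-1)$, the locally free summands of $i_C^*E(aC)\cong i_C^*E\otimes\mc{O}_C(-a)$ are the $\mc{O}_C(a_j-a)$ with $a_j-a\ge -1$. Feeding this into the exact sequences $0\to i_{C*}\!\big(i_C^*E(aC)\otimes\mc{O}_C(m)\big)\to E(aC)\otimes\mc{O}_{C_{m+1}}\to E(aC)\otimes\mc{O}_{C_m}\to 0$, with $C_m$ the $m$‑th infinitesimal neighbourhood of $C$, shows that $H^1$ of every term vanishes for $m\ge 0$, so all transition maps in the inverse system of the theorem on formal functions are isomorphisms and $\mathrm R^1p_*E(aC)|_o\cong H^1(C,i_C^*E(aC))=0$.

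For (ii) and (iii), the starting point is that $H$‑stability of $G:=p_*E$ is part of the definition, while, using the $a=0$ vanishing, pushing the natural map $E\to E(C)$ forward shows that $F:=p_*(E(C))$ is torsion‑free, of the same rank $r$ and first Chern class $c_1$ as $G$, with $\chi(F)=\chi(G)-l$ (a $K$‑theory computation, $c_1(E)\cdot C=-l$) --- so $F$ is a length‑$(-l)$, $0$‑dimensional upward modification of $G$ at $o$. Because $\Gcd(r,c_1\cdot H)=1$, any would‑be destabilizing subsheaf of $F$ would have to be of full rank (by $\mu$‑semistability of the modification and coprimality) and hence cut out a nonzero $0$‑dimensional quotient, dropping the Euler characteristic and the reduced Hilbert polynomial; this gives (ii). For (iii) I would invoke the description in \cite{1050282810787470592}: a stable perverse coherent sheaf $E\in\mathrm M^0(l,n)$ is exactly the datum of $F=p_*(E(C))$ together with a surjection $\phi\colon F|_o\twoheadrightarrow Q$ of vector spaces with $\dim Q=-l$ --- $E$ being the associated universal Hecke modification of $p^*F$ along $C$ --- so that $p_*E=\ker(F\xrightarrow{\mathrm{ev}_o}F|_o\xrightarrow{\phi}Q)=G$, and thus $E$ is equally the datum of $G$ plus the resulting length‑$(-l)$ reduced $0$‑dimensional embedding $G\subseteq F$ at $o$. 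Letting $F$ vary over $\mc{M}_H(n)$, the map $\phi$ is precisely a rank‑$(-l)$ quotient of $h^0(\mc{U}_o)=\mc{U}|_{\mc{M}_H(n)\times o}$, i.e.\ a point of $\mathrm{Gr}_{\mc{M}_H(n)}(\mc{U}_o,-l)$, and $\eta$ is its projection; letting $G$ vary over $\mc{M}_H(n-l)$, the embeddings $G\subseteq F$ are classified by $\operatorname{Ext}^1_{\mc{O}_S}(\mc{O}_o,G)\cong\big(h^0(\mc{U}_o^\vee[1])|_G\big)^\vee$ and a choice of $(-l)$‑dimensional subspace, i.e.\ a rank‑$(-l)$ quotient of $h^0(\mc{U}_o^\vee[1])$, a point of $\mathrm{Gr}_{\mc{M}_H(n-l)}(\mc{U}_o^\vee[1],-l)$, with $\zeta$ its projection; the twist‑by‑$\mc{O}_{\hat S}(C)$ sequence above interchanges the two descriptions.

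The hard part will be to upgrade these bijections on closed points to isomorphisms of the moduli schemes. Concretely, one must construct the universal perverse coherent sheaf over each Grassmannian and check that applying $\mathrm R\tilde p_*$ to it, taking the cone of the canonical morphism $L\tilde p^*(\text{universal sheaf})\to(\text{universal perverse sheaf})$, and restricting that cone to $\mc{M}_H\times C$ all commute with arbitrary base change --- equivalently, that the resulting quotient sheaf is locally free of rank $-l$ over the entire moduli space. This is where the explicit Fourier--Mukai kernels of \cite{jiang2023derived} are indispensable, and where the coprimality $\Gcd(r,c_1\cdot H)=1$ (ensuring $\mc{M}_H$ is a smooth projective fine moduli space) enters; in the end it is \cite[Lemma 3.4 and Theorem 4.1]{1050282810787470592} in the precise form we need.
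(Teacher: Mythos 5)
The paper does not actually prove \cref{thm:NY1}: it is imported verbatim, with the vanishing and stability attributed to \cite[Proposition 3.1]{kuhn2021blowup} and the Grassmannian identification to \cite[Lemma 3.4, Theorem 4.1]{1050282810787470592}, so the only fair comparison is with those sources. Your reconstruction is essentially the standard argument and is sound: adjunction turns $\operatorname{Hom}(E,\mc{O}_C(-1))=0$ into non-negativity of the degrees of the locally free part of $i_C^*E$, the theorem on formal functions (with $\mf{m}_o\mc{O}_{\hat S}=\mc{O}(-C)$, so the thickenings are the $C_m$) kills $\mathrm{R}^1p_*E(aC)$ for $a=0,1$; stability of $p_*E$ is part of the definition, and your elementary-modification-plus-coprimality argument for $p_*(E(C))$ is correct (small-rank subsheaves are handled by the inherited slope stability, full-rank ones by the drop in $\chi$ or in $c_1\cdot H$); and your pointwise dictionary — quotients $F|_o\twoheadrightarrow Q$ of rank $-l$ on the $\eta$ side, $(-l)$-dimensional subspaces of $\operatorname{Ext}^1(\mc{O}_o,G)\cong h^0(\mc{U}_o^\vee[1])^\vee$ on the $\zeta$ side, exchanged by twisting by $\mc{O}_{\hat S}(C)$ — is exactly the Hecke-modification description underlying the theorem, with the scheme-theoretic upgrade correctly delegated to \cite{1050282810787470592}, which is also what the paper does. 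So your route differs from the paper only in that you re-derive the two easy parts and sketch the geometry behind the hard part, which is useful but not what the paper itself supplies.

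Three small corrections. First, $\Gcd(r,c_1\cdot H)=1$ gives a fine projective moduli space with stable $=$ semistable, but not smoothness of $\mc{M}_H(n)$; smoothness only enters under \cref{assumptionS} (via \cref{thm:smooth}) and is not needed for \cref{thm:NY1}, where $\mc{U}$ is merely a Tor-amplitude $[0,1]$ complex. Second, your displayed sequence over the infinitesimal neighbourhoods is in general only right exact (the term $i_{C*}(i_C^*E(aC)\otimes\mc{O}_C(m))$ surjects onto the kernel rather than being it); the argument survives because $H^1$ is right exact on the one-dimensional schemes $C_m$, but as written the exactness claim is too strong. Third, the explicit Fourier--Mukai kernels of \cite{jiang2023derived} are not what this identification rests on: \cref{thm:NY1} is entirely Nakajima--Yoshioka/Kuhn--Tanaka, and the kernels of \cite{jiang2023derived} enter only later, for the semiorthogonal decomposition and \cref{thm:1}.
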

By \cref{thm:NY1}, we have $\mathrm{M}^0(0,n)\cong \mc{M}_{H}(n)$ for any $n\in \mb{Z}$.
\begin{theorem}[Proposition 3.37 of \cite{1050282810787470592}]
  \label{thm:NY2}
  Fixing integers $l,n$, for $m\gg 0$ we have 
  \begin{equation*}
    \mathrm{M}^0(l-mr,n-ml+\frac{m(m-1)}{2}r)\cong \mathrm{M}^{m}(l,n)\cong \mc{M}_{H_{\infty}}(l,n).
  \end{equation*}
\end{theorem}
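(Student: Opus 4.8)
The statement splits into two halves of very different character, and my plan treats them separately. The first isomorphism $\mathrm{M}^0\big(l-mr,\,n-ml+\tfrac{m(m-1)}{2}r\big)\cong \mathrm{M}^{m}(l,n)$ in fact holds for every integer $m$ and is essentially formal: the auto-equivalence $E\mapsto E\otimes\mc{O}_{\hat{S}}(mC)$ of $\mathrm{Coh}(\hat{S})$ carries $0$-stable perverse coherent sheaves to $m$-stable ones by the very definition of $m$-stability, and, being compatible with base change, it induces an isomorphism of the corresponding (fine) moduli functors. The only thing to verify is which Chern classes get matched, and that is a Riemann--Roch bookkeeping exercise using $C^2=-1$ and $p^*c_1\cdot C=0$: tensoring by $\mc{O}(mC)$ carries the Chern class \cref{eq:moduli} with parameters $(l-mr,\,n-ml+\tfrac{m(m-1)}{2}r)$ to the one with parameters $(l,n)$, as required. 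I would record this computation and move on.

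The real content is the second isomorphism $\mathrm{M}^m(l,n)\cong\mc{M}_{H_\infty}(l,n)$ for $m\gg 0$, and my plan is to prove that the two moduli functors coincide once $m$ exceeds a threshold $m_0=m_0(r,c_1,l,n)$. First I would gather the boundedness inputs: $\mc{M}_{H_\infty}(l,n)$ and each $\mathrm{M}^m(l,n)$ are projective schemes, and the restrictions to the exceptional curve $C$ of the associated universal families vary in bounded families. Next, using these, I would show that for $m\geq m_0$ every $m$-stable perverse coherent sheaf $E$ with Chern class \cref{eq:moduli} is an honest torsion-free coherent sheaf: by the description of the perverse heart the only obstruction is that $\mc{H}^{-1}(E)$ could be a nonzero sheaf supported on $C$, but the defining vanishings $\operatorname{Hom}(\mc{O}_C,E(-mC))=0$ and $\operatorname{Hom}(E(-mC),\mc{O}_C(-1))=0$ turn, after the large twist by $\mc{O}(mC)$, into vanishings of cohomology on $C\cong\mb{P}^1$ that the uniform bounds show are incompatible with $\mc{H}^{-1}(E)\neq 0$, with \cref{thm:NY1} supplying the requisite control of $p_*$ near $C$. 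Finally, on honest torsion-free sheaves I would match the two stabilities: for torsion-free $E$ with Chern class \cref{eq:moduli}, ``$m$-stable perverse coherent'' and ``$(p^*H-\epsilon C)$-Gieseker stable for $0<\epsilon\ll 1$'' become equivalent once $m\gg 0$. The crucial point is that $\Gcd(r,c_1\cdot H)=1$ forbids any subsheaf $0\neq F\subsetneq E$ of marginal slope $\tfrac{c_1(F)\cdot p^*H}{\operatorname{rank}(F)}=\tfrac{c_1\cdot H}{r}$, so the small $\epsilon C$-correction to slopes and the finitely many corrections produced by the twist by $\mc{O}(mC)$ can never flip a (semi)stability inequality. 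Running each of these three steps in families over an arbitrary base then yields the isomorphism of moduli functors for $m\geq m_0$, hence $\mathrm{M}^m(l,n)\cong\mc{M}_{H_\infty}(l,n)$; composing with the first isomorphism completes the chain.

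The main obstacle is the middle step --- the collapse of the perverse heart onto $\mathrm{Coh}(\hat{S})$ for $m$ large --- because it asks for a single $m_0$ that simultaneously kills the $\mc{O}_C$-directions for \emph{every} member of the relevant bounded family and behaves well under base change; this is precisely the delicate cohomology-and-base-change analysis carried out by Nakajima--Yoshioka. Indeed, the entire statement is Proposition~3.37 of \cite{1050282810787470592}, and the outline above only records how that argument is organized; in the present paper we invoke it directly.
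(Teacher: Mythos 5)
Your proposal matches the paper's proof: the second isomorphism is simply invoked as Proposition~3.37 of \cite{1050282810787470592}, and the first is obtained exactly as you describe, by observing that $E\mapsto E\otimes\mc{O}(-mC)$ carries $m$-stable to $0$-stable perverse coherent sheaves and checking the resulting Chern character. The additional sketch you give of how Nakajima--Yoshioka's argument is organized is fine but not needed, since the paper (like you, in the end) cites that result directly.
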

\begin{proof}
  The second isomorphism $\mathrm{M}^{m}(l,n)\cong \mc{M}_{H_{\infty}}(l,n)$ when $m\gg 0$ is precisely Proposition 3.37 of \cite{1050282810787470592}. For the first isomorphism, we only need to notice that given $E\in \mathrm{M}^m(l,n)$, $E\otimes \mc{O}(-mC)$ is $0$-stable with the Chern character 
\begin{align*}
  (r,p^*c_1+(l-mr)[C],-n+ml-\frac{m^2}{2}r+\frac{l}{2}+\frac{c_1^2}{2})
\end{align*}
and hence is in $\mathrm{M}^0(l-mr,n-ml+\frac{m(m-1)}{2}r)$.
\end{proof}

\begin{theorem}[Lemma 3.22 of \cite{1050282810787470592} and Lemma 3.3 of \cite{koseki2021categorical}]
\label{thm:smooth} Assuming \cref{assumptionS},
For any $l,n\in \mb{Z}$, $\mathrm{M}^{0}(l,n)$ is either empty or a smooth variety of dimension 
  \begin{equation*}
    \mathrm{const}+2rn-l(l+r),
  \end{equation*}
  where $\mathrm{const}:=(1-r)c_1^2-(r^2-1)\chi(\mc{O}_S)+h^1(\mc{O}_S)$ only depends on $S$, $r$ and $c_1$.
  \end{theorem}

\begin{remark} We remind the reader that our convention is slightly different from \cite{1050282810787470592}, where they considered the Chern character $(r,p^*c_1-l[C],-n+\frac{l}{2}+\frac{c_1^2}{2})$, as we need to fit with the extended Clifford algebra action in \cref{sec:666}. 
\end{remark}

\section{Grassmannians of complexes and the Clifford algebra}
\label{sec:cli}
For any integer $r>0$, let the \textbf{rank $r$ Clifford algebra $\mathrm{Cl}(r)$} be the unital associative $\mathbb{Z}$-algebra generated by elements $p_i$ and $q_i$ for $i \in [r]$, subject to the relations $p_i^2=q_i^2=0$ for all $i \in [r]$, and 
\begin{equation*}
 \{p_i, p_j\} = 0, \quad \{q_i, q_j\} = 0, \quad
\{p_i, q_j\} = \delta_{ij} \cdot 1  \quad \text{for all } i, j \in [r].
\end{equation*} 
Let $\mathrm{F}(r):=\bigoplus_{d=0}^{r}\wedge^d(\mb{Z}^r)$ be the exterior algebra and $\{v_i \mid i\in [r]\}$ be a standard basis of $\mathbb{Z}^r$.
Then $\mathrm{F}(r)$ is the \textbf{spin representation} of $\mathrm{Cl}(r)$ by the action
  \begin{equation}
    \label{eq:rep1}
    p_i:=\frac{\partial}{\partial v_i}, \quad q_i:=v_i\wedge -.
\end{equation}

In this section, we first sketch a $\mathrm{Cl}(r)$-action on $\bigoplus_{d=0}^{r} \mb{H}^*(\mathrm{Gr}(r,d))$ for some (co)homology theory $\mb{H}^*$. We only need an identification of $\mathbb{Z}$-modules 
\begin{equation}
  \label{eq:model}
    \bigoplus_{d=0}^{r} \mb{H}^*(\mathrm{Gr}(r,d))\cong \mathrm{F}(r).
\end{equation}
We notice that the basis of $\mathrm{F}(r)$ is parametrized by all the subsets of $[r]$, and for any $I\subset [r]$, we define the \textbf{weight} and \textbf{length} of $I$ as $\mathrm{wt}(I):=\sum_{i\in I}i$ and  $\mathrm{len}(I):=\sum_{i\in I}1$. On the other hand, a \textbf{partition} $\lambda$ is a sequence of positive integers $\lambda=(\lambda_1\geq \lambda_2\geq \cdots \geq \lambda_l)$. Given any two integers $a,b\geq 0$, let $B_{a,b}$ be the set of partitions with  $l\leq a$ and $\lambda_1\leq b$. By the Schubert calculus, the basis of $\mb{H}^*(\mathrm{Gr}(r,d))$ can be parametrized by all partitions in $B_{d,r-d}$. Let $\mathrm{P}_r$ be the disjoint union of all $B_{d,r-d}$ for $0\leq d\leq r$. On the other hand, let $[r]:=\{0,\cdots,r-1\}$ be the set of $r$ elements. Then \cref{eq:model} is naturally induced from the bijection $\iota$ from all the subsets of $[r]$ to $\mathrm{P}_r$
  \begin{align}
  \label{eq:iotaa}
   \iota: (i_0<i_1<\cdots <i_{d-1})\to (i_{d-1}-d+1\geq \cdots \geq i_0).
\end{align}

From \cref{sec:cli} to \cref{sec:derived}, we generalize \cref{eq:model} to the Grassmannians of Tor-amplitude $[0,1]$-complexes and also to different (co)homology theories. In particular, we construct an isomorphism of graded $\mathrm{Cl}(r)$-representations: 
\begin{equation}
\label{eq:once}
    \bigoplus_{d \in \mathbb{Z}} \mb{H}^*(\mathrm{Gr}_X(\mathcal{E}, d))
    \cong
    \bigoplus_{d \in \mathbb{Z}} \mb{H}^*(\mathrm{Gr}_X(\mathcal{E}^{\vee}[1], d)) \otimes \mathrm{F}(r),
\end{equation}
where $\mb{H}^*$ can be the Chow groups (\cref{thm:4.4}), Hodge cohomology groups (\cref{thm:4.4}), Grothendieck group of coherent sheaves (\cref{thm:1}) and Hochschild homology groups (\cref{thm:hos}). The main tool for constructing \cref{eq:once} arises from the semi-orthogonal decomposition in derived categories, where it naturally induces a decomposition of the Grothendieck groups which is however generally not orthogonal and needs an \textbf{orthogonalization} procedure. Surprisingly, we find that this orthogonalization is particularly useful for constructing $\mathrm{Cl}(r)$-representations, which we explain in this section. We will give the geometric correspondences in \cref{sec:chow} and \cref{sec:derived}.

\begin{remark}
 Unlike $\mathrm{gl}_r$, where there are infinitely many irreducible representations, $\mathrm{F}(r)$ is the unique irreducible representation of $\mathrm{Cl}(r)$ (see \cref{lem:irreducible} for a proof). We will return to this perspective in \cref{sec:main}, via the Boson--Fermion correspondence.
\end{remark}

\subsection{The orthogonalization and \texorpdfstring{$\mathrm{Cl}(r)$-representations}{}}
\label{sec:444} In $\mathrm{Cl}(r)$ we define $p_{\emptyset} = q_{\emptyset}=1$ and for any subset $I=(a_0< \ldots< a_k) \subset [r]$ we define
\begin{equation*}
p_{I}:=p_{a_{k}}p_{a_{k-1}}\cdots p_{a_0}, \quad q_{I}:=q_{a_0}q_{a_1}\cdots q_{a_k}.
\end{equation*}
\begin{lemma}
\label{lem:irreducible}
Let $V$ be a $\mathrm{Cl}(r)$-representation and let $W:= \bigcap_{i\in [r]}\ker(p_i)$.
 Then we have the isomorphism of $\mathrm{Cl}(r)$-representations
\begin{equation}
  \label{eq:clifcong}
  W\otimes\mathrm{F}(r)\to V, \quad w\otimes v_I\mapsto q_Iw.
\end{equation}
\end{lemma}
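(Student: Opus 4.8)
The plan is to show that the stated map is a well-defined, $\mathrm{Cl}(r)$-equivariant isomorphism. I would begin by checking that the formula $w \otimes v_I \mapsto q_I w$ actually defines a map of $\mathrm{Cl}(r)$-representations, where the $\mathrm{Cl}(r)$-action on $W \otimes \mathrm{F}(r)$ is via the second factor (using $\mathrm{F}(r)$'s spin representation from \cref{eq:rep1}), and where $W = \bigcap_i \ker(p_i)$ is the subspace of ``lowest-weight'' vectors. Equivariance amounts to the identities $p_j(q_I w) = p_j q_I w$ and $q_j(q_I w) = q_j q_I w$ matching the action of $p_j, q_j$ on $v_I$ in $\mathrm{F}(r)$: since $w \in W$ we have $p_i w = 0$ for all $i$, so $p_j q_I w$ can be computed by commuting $p_j$ past the $q$'s using the Clifford relations $\{p_j, q_i\} = \delta_{ij}$, which telescopes to exactly $\sum$ (sign) $q_{I \setminus j} w$ when $j \in I$ and to $0$ when $j \notin I$ — precisely the formula for $\partial/\partial v_j$ acting on $v_I$. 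The identity $q_j(q_I w) = q_j q_I w$ matching $v_j \wedge v_I$ is immediate from the definition of $q_I$ and the anticommutation $\{q_i, q_j\} = 0$ (including $q_j^2 = 0$, so both sides vanish when $j \in I$).

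Next I would prove injectivity and surjectivity. For surjectivity: given any $v \in V$, I want to write it as $\sum_I q_I w_I$ with $w_I \in W$. The key mechanism is the central idempotent-like decomposition coming from the element $e_i := q_i p_i$; one checks $e_i^2 = e_i$ (using $p_i q_i p_i = p_i$, which follows from $\{p_i,q_i\}=1$ and $p_i^2=0$) and that the $e_i$ commute with each other. Then $\prod_{i \in [r]}(1 - e_i)$ projects $V$ onto $W$, and expanding $1 = \prod_i \big(e_i + (1-e_i)\big)$ and pushing all the $q$'s (from the $e_i = q_i p_i$ factors) to the left gives the desired expansion of any $v$ in the form $\sum_I q_I w_I$ — this shows the map is surjective. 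For injectivity: suppose $\sum_I q_I w_I = 0$ with $w_I \in W$. Pick $I$ minimal among those with $w_I \neq 0$ in some suitable order, say of maximal length, and apply $p_I$ (the appropriately ordered product of $p_i$, $i \in I$). Using the commutation relations and $p_i w_J = 0$, the operator $p_I$ kills $q_J w_J$ for every $J \neq I$ appearing (for $J$ not containing $I$, some $p_i$ with $i \in I \setminus J$ anticommutes through and hits $w_J$; by the maximal-length choice no $J \supsetneq I$ occurs), while $p_I q_I w_I = \pm w_I$. Hence $w_I = 0$, a contradiction.

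A cleaner alternative for injectivity/surjectivity together: establish the abstract fact that $\mathrm{Cl}(r) \cong \mathrm{End}_{\mathbb{Z}}(\mathrm{F}(r))$ as $\mathbb{Z}$-algebras (it is a $\mathbb{Z}$-form of a matrix algebra of size $2^r$, spanned freely over $\mathbb{Z}$ by the $2^{2r}$ monomials $q_I p_J$), so that $\mathrm{Cl}(r)$ is a (split) Azumaya/matrix algebra over $\mathbb{Z}$ and every representation is a direct sum of copies of $\mathrm{F}(r)$; the multiplicity space is recovered as $\mathrm{Hom}_{\mathrm{Cl}(r)}(\mathrm{F}(r), V) \cong W$ via evaluation at the lowest-weight vector $v_\emptyset = 1 \in \wedge^0 \mathbb{Z}^r$, since $W = \mathrm{Hom}_{\mathrm{Cl}(r)}(\mathrm{F}(r),V)$ is exactly the set of images of $v_\emptyset$, the unique (up to scalar) vector killed by all $p_i$ in $\mathrm{F}(r)$. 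Then \cref{eq:clifcong} is the canonical isomorphism $\mathrm{Hom}_{\mathrm{Cl}(r)}(\mathrm{F}(r),V) \otimes \mathrm{F}(r) \xrightarrow{\sim} V$ for a matrix algebra. I would likely present the hands-on version since it makes the explicit formula $w \otimes v_I \mapsto q_I w$ transparent and avoids invoking Azumaya machinery over $\mathbb{Z}$.

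The main obstacle I anticipate is bookkeeping the signs: the ordered products $p_I$ and $q_I$ are defined with specific orderings (ascending for $q_I$, descending for $p_I$), and verifying both the equivariance identity (that $p_j q_I w$ produces the same signed sum as $\partial_{v_j}(v_I)$) and the injectivity step (that $p_I q_I w_I = w_I$ with the right sign, and that the choice of ordering makes the cross-terms vanish) requires careful tracking of transposition signs through the Clifford relations. Everything else — the idempotent identities for $e_i = q_i p_i$, their mutual commutativity, the expansion of unity — is routine once the conventions are fixed. Working over $\mathbb{Z}$ rather than a field causes no trouble here: all the arguments are identities of operators and do not use invertibility of any integer.
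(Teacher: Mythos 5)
Your proof is correct, but it takes a genuinely different route from the paper's. The paper's argument is a two-line induction on $r$: it uses the $\mathbb{Z}$-module factorization $\mathrm{F}(r)\cong\bigotimes_{i=1}^r\mathrm{F}(1)$ to reduce to the case $r=1$, where the explicit inverse $v\mapsto p_0v\otimes v_{[1]}+p_0q_0v\otimes v_{\emptyset}$ is written down and checked directly. You instead work at rank $r$ throughout: equivariance by anticommuting $p_j$ through $q_I$ and using $p_iw=0$; surjectivity by expanding $1=\prod_i\bigl(q_ip_i+(1-q_ip_i)\bigr)$ over the commuting idempotents $q_ip_i$ and pushing the $q$'s to the left; injectivity by extracting a maximal-length term with $p_I$. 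All of these steps check out (including the sign bookkeeping you flag: $p_Iq_I|_W=\mathrm{Id}$ with the paper's ordering conventions, and the cross-terms vanish because some $p_i$ with $i\in I\setminus J$ reaches $w_J$). What each buys: the paper's induction is shorter, while your expansion of the identity produces an explicit rank-$r$ inverse and in effect re-derives the resolution of the identity $\sum_{I}q_Ip_{[r]}q_{[r]}p_I=1$ of \cref{cor:faith} directly (note $\prod_{i\in[r]}(1-q_ip_i)=p_{[r]}q_{[r]}$), whereas the paper obtains \cref{cor:faith} as a consequence of the lemma. Your alternative via $\mathrm{Cl}(r)\cong M_{2^r}(\mathbb{Z})$ and Morita theory is also sound over $\mathbb{Z}$, since the elements $q_Ip_{[r]}q_{[r]}p_J$ form an integral system of matrix units, but the hands-on version you chose to present is the better match for the explicit formula \cref{eq:clifcong}.
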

\begin{proof}
As $\mathrm{F}(r)\cong \bigotimes_{i=1}^r \mathrm{F}(1)$ as $\mathbb{Z}$-modules, by induction we only need to prove the case that $r=1$. In this case, $\mathrm{Im}(p_0)\subset W$ and the following morphism 
  \begin{equation*}
      V\to W\otimes \mathrm{F}(1) \quad v\to p_0v\otimes v_{[1]}+p_0q_0v\otimes v_{\emptyset}.
  \end{equation*}
  is the inverse of \cref{eq:clifcong} by direct computations.
\end{proof}
By \cref{lem:irreducible}, many properties of a $\mathrm{Cl}(r)$-representation $V$ can be reduced to the computations on $\mathrm{F}(r)$. For example, for any $\mathrm{Cl}(r)$-representation $V$, we have
\begin{equation}
  \label{lem:45}
  (p_{[r]}q_{[r]})|_{(\cap_{i\in [r]}\mathrm{ker}(p_i))}=\mathrm{Id}, \quad (q_{[r]}p_{[r]})|_{(\cap_{i\in [r]}\mathrm{ker}(q_i))}=\mathrm{Id},
\end{equation}
and $\sum_{I\subset [r]}q_Ip_{[r]}q_{[r]}p_I=\mathrm{Id}_V$. Regarding $\mathrm{Cl}(r)$ as a $\mathrm{Cl}(r)$-representation via left multiplication, in $\mathrm{Cl}(r)$ we have
\begin{equation}
      \label{cor:faith}
      \sum_{I\subset [r]}q_{I}p_{[r]}q_{[r]}p_{I}=1.
\end{equation}

Now let $W$ and $V$ be two $\mathbb{Z}$-modules and $\mc{P}$ be a total order on the set of all subsets $I\subset [r]$. Given morphisms $\mf{e}_I:W\to V$ and  $\mf{f}_I:V\to W$ for all $I\subset [r]$, we say that they are \textbf{semi-orthogonal} with respect to $\mc{P}$ if
\begin{equation*}
    \mf{f}_I\mf{e}_J=\delta_{IJ} \cdot \operatorname{id}_{W} \quad \text{for all} \quad I\geq_{\mc{P}}J.
\end{equation*}
We define their \textbf{orthogonalization} as 
\begin{gather}
    \label{e9}
    e_I:=\mf{e}_I, \quad f_{I}:=\mf{f}_I\prod_{J>_{\mc{P}}I}^{\mc{P}}(\mathrm{Id}_{V}-\mf{e}_J\mf{f}_J).
  \end{gather}
We collect all morphisms $e_I$ and $f_I$ and define the morphisms
 \begin{gather}
  \label{eq:ehat}
  \hat{e}:W\otimes\mathrm{F}(r)\to V, \quad x\otimes v_I\mapsto e_Ix, \\
  \nonumber
  \hat{f}:V\to W\otimes\mathrm{F}(r), \quad y\mapsto \sum_{I\subset [r]}f_Iy\otimes v_I.
\end{gather}
The surprising fact is that the morphism $\hat{e}$ (or $\hat{f}$) is an isomorphism of $\mathrm{Cl}(r)$-representations if and only if $\Gamma_{\mc{P}}:=\prod_{J\subset [r]}^{\mc{P}}(\mathrm{Id}_V-\mf{e}_J\mf{f}_J)$ acts as $0$ on $V$.
\begin{proposition}
    \label{prop:4}
    The following are equivalent:
     \begin{enumerate}
    \item The morphism $\hat{e}$ (resp. $\hat{f}$) is bijective.
    \item We have $\Gamma_{\mc{P}}=0$.
     \item  There exists a Clifford algebra $\mathrm{Cl}(r)$ representation on $V$ such that 
    \begin{gather}
      \label{eq:inverse}
      e_I=q_Ie, \quad  f_I=fp_I, \\
        \label{eq3}
        p_{i}e=0, \quad fq_{i}=0, \quad fe=\mathrm{Id}_{W}, \quad ef=p_{[r]}q_{[r]},
      \end{gather}
      where
      \begin{equation*}
      e:=e_{\emptyset}:W\to V, \quad f:=f_{\emptyset}:V\to W.
    \end{equation*}
  \end{enumerate}
\end{proposition}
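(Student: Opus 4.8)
The plan is to prove the cyclic chain of implications $(1)\Rightarrow(2)\Rightarrow(3)\Rightarrow(1)$, with the equivalence of the $\hat e$ and $\hat f$ versions in $(1)$ handled along the way (or treated symmetrically). The conceptual heart is $(2)\Rightarrow(3)$: from the single vanishing $\Gamma_{\mc P}=0$ we must manufacture an entire $\mathrm{Cl}(r)$-action on $V$. The natural candidates are forced on us by \cref{eq:inverse}: set $e:=\mf e_\emptyset=e_\emptyset$ and $f:=f_\emptyset=\mf f_\emptyset\prod_{J>_{\mc P}\emptyset}^{\mc P}(\mathrm{Id}_V-\mf e_J\mf f_J)$, and then \emph{define} $q_i$ and $p_i$ on $V$ by transporting the Clifford action on $W\otimes\mathrm{F}(r)$ through $\hat e$; equivalently one writes down explicit formulas for $p_i,q_i$ as combinations of the $e_I,f_J$ and checks the Clifford relations and \cref{eq3} directly.

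Concretely, I would first record the basic combinatorial lemma about the orthogonalization: the semi-orthogonality hypothesis $\mf f_I\mf e_J=\delta_{IJ}\operatorname{id}_W$ for $I\geq_{\mc P}J$ implies, after the correction in \cref{e9}, the \emph{full} orthogonality $f_I e_J=\delta_{IJ}\operatorname{id}_W$ for \emph{all} $I,J\subset[r]$ — for $I\geq_{\mc P}J$ the extra idempotent factors $(\mathrm{Id}_V-\mf e_K\mf f_K)$ with $K>_{\mc P}I$ kill nothing new, while for $I<_{\mc P}J$ the factor with $K=J$ annihilates $\mf e_J$ since $\mf f_J\mf e_J=\operatorname{id}_W$ makes $\mf e_J$ land in the $+1$-eigenspace of $\mf e_J\mf f_J$. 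This gives $\hat f\hat e=\mathrm{Id}_{W\otimes\mathrm{F}(r)}$ unconditionally, so $\hat e$ is always split injective and $\hat f$ always split surjective; the content of $(1)$ is thus exactly surjectivity of $\hat e$ (equivalently injectivity of $\hat f$). Next I would compute $\hat e\hat f=\sum_{I\subset[r]}e_I f_I:V\to V$ and identify the "defect" operator $\mathrm{Id}_V-\hat e\hat f$. Expanding the telescoping product, one sees $\sum_I e_I f_I = \mathrm{Id}_V-\Gamma_{\mc P}$: indeed $\sum_{J\subset[r]}\mf e_J\mf f_J\prod_{K>_{\mc P}J}^{\mc P}(\mathrm{Id}_V-\mf e_K\mf f_K)$ is a telescoping sum equal to $\mathrm{Id}_V-\prod_{J\subset[r]}^{\mc P}(\mathrm{Id}_V-\mf e_J\mf f_J)=\mathrm{Id}_V-\Gamma_{\mc P}$. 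Hence $\hat e\hat f=\mathrm{Id}_V-\Gamma_{\mc P}$, which immediately yields $(1)\Leftrightarrow(2)$: $\hat e$ (resp. $\hat f$) is bijective iff it is surjective (resp. injective) iff $\Gamma_{\mc P}=0$.

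For $(2)\Rightarrow(3)$: once $\Gamma_{\mc P}=0$ we have mutually inverse isomorphisms $\hat e,\hat f$ between $V$ and $W\otimes\mathrm{F}(r)$, so we transport the standard $\mathrm{Cl}(r)$-action — $p_i=\mathrm{Id}_W\otimes\frac{\partial}{\partial v_i}$, $q_i=\mathrm{Id}_W\otimes(v_i\wedge-)$ — to $V$ via $p_i^V:=\hat e\circ p_i\circ\hat f$, $q_i^V:=\hat e\circ q_i\circ\hat f$. The Clifford relations are preserved under conjugation by an isomorphism, so $V$ becomes a $\mathrm{Cl}(r)$-representation. It remains to verify the explicit identities in \cref{eq:inverse,eq3}. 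By \cref{lem:irreducible} applied to $W\otimes\mathrm{F}(r)$ (where $\bigcap_i\ker(p_i)=W\otimes v_\emptyset$), the map $w\otimes v_I\mapsto q_I(w\otimes v_\emptyset)$ is the identity, i.e. $q_I$ acts on $W\otimes\mathrm{F}(r)$ by $w\otimes v_\emptyset\mapsto w\otimes v_I$ up to sign bookkeeping from the ordering conventions in $q_I=q_{a_0}\cdots q_{a_k}$ and $\iota$; chasing this through $\hat e,\hat f$ and using $\hat e(w\otimes v_\emptyset)=e_\emptyset w=ew$, $\hat f(y)=\sum_I f_I y\otimes v_I$ gives $q_I^V e=e_I$ and $f p_I^V=f_I$ once we check the signs match the definitions $p_I=p_{a_k}\cdots p_{a_0}$, $q_I=q_{a_0}\cdots q_{a_k}$. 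The four relations in \cref{eq3} then follow: $p_i e=0$ because $ew=\hat e(w\otimes v_\emptyset)$ and $\frac{\partial}{\partial v_i}v_\emptyset=0$; $fq_i=0$ dually (using $f=\mathrm{pr}_{v_\emptyset}$-component, so $f\circ q_i$ picks the $v_\emptyset$-coefficient of $v_i\wedge(-)$, which vanishes); $fe=\operatorname{id}_W$ is $f_\emptyset e_\emptyset=\operatorname{id}_W$ from full orthogonality; and $ef=p_{[r]}q_{[r]}$ because on $W\otimes\mathrm{F}(r)$ the operator $(\mathrm{Id}_W\otimes v_\emptyset\text{-projection})$ composed appropriately equals $p_{[r]}q_{[r]}$ by \cref{lem:45}. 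Finally $(3)\Rightarrow(1)$ is the easiest: given such a $\mathrm{Cl}(r)$-structure on $V$, \cref{lem:irreducible} gives an isomorphism $W'\otimes\mathrm{F}(r)\xrightarrow{\sim}V$ with $W'=\bigcap_i\ker(p_i^V)$; the relations $p_ie=0$, $fe=\operatorname{id}_W$ identify $\operatorname{im}(e)$ with a copy of $W$ inside $W'$, and $ef=p_{[r]}q_{[r]}$ forces $W'=\operatorname{im}(e)\cong W$ and the isomorphism to coincide with $\hat e$.

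The main obstacle I anticipate is purely bookkeeping rather than conceptual: getting every sign right in the identification of $q_I$-action with the combinatorial bijection $\iota$ of \cref{eq:iotaa}, since $\mathrm{F}(r)$ is the exterior (not divided-power or symmetric) algebra and the chosen orderings in $p_I,q_I$ versus the increasing order $(a_0<\cdots<a_k)$ interact with the Koszul signs. I would isolate this into one preliminary lemma — "$q_I(w\otimes v_\emptyset)=w\otimes v_I$ exactly, with the stated conventions" — verified by the $r=1$ reduction exactly as in the proof of \cref{lem:irreducible}, so that the rest of $(2)\Rightarrow(3)$ is formal transport of structure. A secondary, milder subtlety is confirming that the telescoping identity $\sum_{J}\mf e_J\mf f_J\prod_{K>_{\mc P}J}^{\mc P}(\mathrm{Id}_V-\mf e_K\mf f_K)=\mathrm{Id}_V-\Gamma_{\mc P}$ holds for an \emph{arbitrary} total order $\mc P$ and with the $\mf e_J\mf f_J$ \emph{not} assumed to be commuting idempotents — but this is immediate from the identity $x\cdot y=(\mathrm{Id}-(\mathrm{Id}-x)y)-(\mathrm{Id}-y)$ applied iteratively down the ordered product, so no commutativity is needed.
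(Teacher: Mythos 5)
Your proposal is correct and follows essentially the same route as the paper: full orthogonality $f_Je_I=\delta_{IJ}$ together with the telescoping identity $\Gamma_{\mc P}=\mathrm{Id}-\sum_I e_If_I$ give $\hat f\hat e=\mathrm{Id}$ and $\hat e\hat f=\mathrm{Id}-\Gamma_{\mc P}$, hence (1)$\Leftrightarrow$(2), and the implication (2)$\Rightarrow$(3) is obtained by transporting the Clifford structure through $\hat e$ and checking the identities on the model $W\otimes\mathrm{F}(r)$, exactly as in the paper. The only, immaterial, divergence is that you close the loop by proving (3)$\Rightarrow$(1) directly from \cref{lem:irreducible} and $ef=p_{[r]}q_{[r]}$ (showing $\operatorname{im}(e)=\bigcap_i\ker(p_i)$), whereas the paper proves (3)$\Rightarrow$(2) via \cref{cor:faith}; both are short consequences of the same lemmas.
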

\begin{proof}
   By direct computations, the morphisms $e_I$ and $f_I$ in \eqref{e9} satisfy
  \begin{align}
    \label{eq:orthogonal}
    f_Je_I=\delta_{IJ}  \quad \text{ for all } \quad I\subset [r], \\
    \label{eq:GammaP2}
    \Gamma_{\mc{P}}=1-\sum_{I\subset [r]}e_If_I.
  \end{align}
By \cref{eq:orthogonal} and \eqref{eq:GammaP2} we have $\hat{f}\hat{e}=\mathrm{Id}$ and  $\hat{e}\hat{f}=\mathrm{Id}-\Gamma_{\mc{P}}$. Hence (1), (2) are equivalent. If (3) holds, then by \cref{cor:faith}
\begin{equation*}
    \sum_{I\subset [r]}e_If_I=\sum_{I\subset [r]}q_{I}p_{[r]}q_{[r]}p_I=1
\end{equation*}
 and hence (2) holds. If (1) and (2) hold, we consider the $V$ as a $\mathrm{Cl}(r)$-representation induced from the isomorphism $\hat{e}$. To verify \cref{eq:inverse} and \cref{eq3}, by \cref{lem:irreducible} we only need to verify the case that $W\cong \mathbb{Z}$ and $V\cong \mathrm{F}(r)$, where $e_I$ maps $1$ to $v_{I}$. It follows from direct computations.
\end{proof}

\begin{remark}
  By \cref{prop:4}, the operators $e_I$ and $f_I$ can be expressed in terms of the orthogonalized operators $e, f, p_i, q_i$, and vice versa.  Specifically, given the orthogonalized operators $e, f, p_i, q_i$, we define $e_I$ and $f_I$ via \eqref{eq:inverse}. Conversely, given the operators $e_I$ and $f_I$ such that \cref{eq:orthogonal} and \cref{eq:GammaP2} hold, by checking the actions on $\mathrm{F}(r)$, the operators $p_i$ and $q_i$ can be written as
\begin{equation*}
p_i := \sum_{I \subseteq [r] \setminus \{i\}} (-1)^{\mathrm{len}(I^{<i})} e_I f_{I \cup \{i\}}, \quad q_i := \sum_{I \subseteq [r] \setminus \{i\}} (-1)^{\mathrm{len}(I^{<i})} e_{I \cup \{i\}} f_I,
\end{equation*}
where $I^{<i} := \{j \in I \mid j < i\}, \quad I^{\geq i} := \{j \in I \mid j \geq i\}$.
\end{remark}

\subsection{The dual representation and the grading}
The isomorphism \cref{eq:ehat} has a duality property: let $e_I,f_I$ satisfy the equations \cref{eq3}. Then the operators 
\begin{equation*}
    \tilde{e}:=e_{[r]}, \quad \tilde{f}:=f_{[r]},\quad \tilde{p}_i:=q_i,\quad \tilde{q}_i:=p_i
\end{equation*}
also satisfy \cref{eq3}.  Moreover, let $\tilde{e}_I:=e_{[r]-I}$. Then $\tilde{e}_I$ also satisfies \cref{eq:inverse} and \cref{eq3}. Hence it induces another isomorphism of $\mathrm{Cl}(r)$-representations
\begin{equation*}
   \hat{\tilde{e}}: W\otimes \mathrm{F}(r)\cong V, \quad w\otimes v_I\mapsto \tilde{e}_Iw.
\end{equation*}
This is called the \textbf{dual representation} of $V$. Moreover, $\hat{\tilde{e}}$ is the composition of $\hat{e}$ with the Hodge star operator:
\begin{equation*}
  *: \mathrm{F}(r)\to \mathrm{F}(r), \quad v_I\mapsto s_I v_{[r]-I}.
\end{equation*}
where $s_I\in \{\pm 1\}$ is a sign depending on $I$. 

Now we consider the graded $\mathrm{Cl}(r)$-representations. The weight and length functions on the subsets $I\subset [r]$ naturally induce the \textbf{length grading} and \textbf{weight grading} on $\mathrm{F}(r)$. With respect to  the Hodge star operator action, we define the \textbf{dual length grading} (resp. \textbf{dual weight grading}) on $\mathrm{F}(r)$ by letting $v_I$ have degree $r-\mathrm{len}(I)$ (resp. $\frac{r(r-1)}{2}-\sum_{i\in I}i$). Given an isomorphism of representations \cref{eq:ehat} with graded $\mathbb{Z}$-modules $W$ and $V$, by direct computation we have the following condition to let \cref{eq:ehat} be an isomorphism of graded $\mathrm{Cl}(r)$-representations.
\begin{lemma}
\label{lem:grading2}
Given an isomorphism of  $\mathrm{Cl}(r)$-representations $\hat{e}$ in \cref{eq:ehat} and gradings on $W$ and $V$, respectively, the following are equivalent:
\begin{enumerate}[leftmargin=*]
  \item The morphism $\hat{e}$ is an isomorphism of graded $\mathrm{Cl}(r)$-representations with respect to the length grading (resp. weight grading) on $\mathrm{F}(r)$;
  \item  The dual representation $\hat{\tilde{e}}$ is an isomorphism of graded $\mathrm{Cl}(r)$-representations with respect to the dual length grading (resp. dual weight grading) on $\mathrm{F}(r)$.
  \item The operators $e,f,p_i,q_i$ shift the degree by $0,0,-1,1$ (resp. $0,0,-i ,i$).
  \item The operators $\tilde{e},\tilde{f},\tilde{p}_i,\tilde{q}_i$ shift the  degree by $r,-r,1,-1$ (resp. $\frac{r(r-1)}{2},-\frac{r(r-1)}{2},i,-i$), 
  \item The operators $e_I$ shift the degree by $\mathrm{len}(I)$ (resp. $\mathrm{wt}(I)$) for all $I\subset [r]$.
  \item The operators $\tilde{e}_I$ shift the degree by $r-\mathrm{len}(I)$ (resp. $\frac{r(r-1)}{2}-\mathrm{wt}(I)$) for all $I\subset [r]$.
\end{enumerate}
\end{lemma}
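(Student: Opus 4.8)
The plan is to reduce the entire statement, via Lemma \ref{lem:irreducible}, to the single case $W \cong \mathbb{Z}$ concentrated in degree $0$ and $V \cong \mathrm{F}(r)$ with the length grading (resp. weight grading) already in place, since every operator in sight is built $\mathbb{Z}$-linearly from $e, f, p_i, q_i$ and all the grading conditions are phrased degree-by-degree. Once we are on $\mathrm{F}(r)$ itself, everything becomes a bookkeeping computation with the basis $\{v_I\}_{I \subset [r]}$, where $v_I$ has length degree $\mathrm{len}(I)$ and weight degree $\mathrm{wt}(I)$ by definition. First I would record the elementary degree shifts of the four generators on $\mathrm{F}(r)$ in the length (resp. weight) grading: $q_i = v_i \wedge -$ sends $v_I$ to $\pm v_{I \cup \{i\}}$, hence raises length by $1$ and weight by $i$; dually $p_i = \partial/\partial v_i$ lowers length by $1$ and weight by $i$; and $e = e_\emptyset$, $f = f_\emptyset$ act — after the identification $W \cong \mathbb{Z}$, $V \cong \mathrm{F}(r)$ from Proposition \ref{prop:4}(3) — as the inclusion $\mathbb{Z} \hookrightarrow \mathrm{F}(r)$ onto $\mathbb{Z} v_\emptyset$ and its projection, both of which preserve degree $0$. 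This is exactly the content of condition (3), so (1) $\Leftrightarrow$ (3) follows once one observes that $\hat e$ is graded iff it matches the prescribed degrees of the generators on the irreducible model, which is immediate from the formula $\hat e(x \otimes v_I) = e_I x = q_I e\, x$ and the multiplicativity of degree shifts under composition.

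Next I would handle the equivalences involving the tilded operators and the dual grading. Since $\tilde p_i = q_i$, $\tilde q_i = p_i$, $\tilde e = e_{[r]} = q_{[r]} e$, $\tilde f = f_{[r]} = f p_{[r]}$, their length shifts on $\mathrm{F}(r)$ are $+1, -1, +r, -r$ respectively — but measured against the \emph{dual} length grading, in which $v_I$ sits in degree $r - \mathrm{len}(I)$, these become $-1, +1, -r, +r$; symmetrically for weights one replaces $\mathrm{len}$ by $\mathrm{wt}$, $r$ by $\frac{r(r-1)}{2}$, and $1$ by $i$. Comparing with the list in (4) shows (3) $\Leftrightarrow$ (4) after a single sign-of-grading flip, and since $\hat{\tilde e}$ is $\hat e$ composed with the Hodge star $*\colon v_I \mapsto s_I v_{[r]-I}$, which by construction intertwines the length grading with the dual length grading (and the weight grading with the dual weight grading), we get (1) $\Leftrightarrow$ (2). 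For (5) and (6) I would simply expand $e_I = q_I e$ and $\tilde e_I = e_{[r]-I} = q_{[r]-I} e$: the degree shift of $q_I = q_{a_0} \cdots q_{a_k}$ is $\sum_{i \in I} 1 = \mathrm{len}(I)$ in the length grading (resp. $\sum_{i \in I} i = \mathrm{wt}(I)$ in the weight grading), which is precisely (5); applying this to the complement $[r] - I$ gives $\mathrm{len}([r]-I) = r - \mathrm{len}(I)$ (resp. $\mathrm{wt}([r]-I) = \frac{r(r-1)}{2} - \mathrm{wt}(I)$), which is (6). Finally (3) $\Leftrightarrow$ (5) and (4) $\Leftrightarrow$ (6) are the two directions of "a product of generators shifts degree by the sum of the individual shifts," and conversely knowing all $e_I$ shifts recovers the individual generator shifts by taking $I = \{i\}$ and $I = \emptyset$.

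The only genuine subtlety — and the step I expect to need the most care — is verifying that the Hodge star operator $*$ is actually graded from the length (resp. weight) grading to the dual length (resp. dual weight) grading, i.e. that $s_I v_{[r]-I}$ lands in the correct degree; but this is forced: $v_{[r]-I}$ has dual length degree $r - \mathrm{len}([r]-I) = \mathrm{len}(I)$ and dual weight degree $\frac{r(r-1)}{2} - \mathrm{wt}([r]-I) = \mathrm{wt}(I)$, matching the length and weight degrees of $v_I$, and the signs $s_I$ do not affect degrees. Everything else is the routine "degree shifts compose additively" observation, carried out on the explicit model $\mathrm{F}(r)$, so I would present the proof as: reduce to $\mathrm{F}(r)$ by Lemma \ref{lem:irreducible} and Proposition \ref{prop:4}; tabulate the generator shifts; and read off all six equivalences from the composition formulas $e_I = q_I e$, $\tilde e_I = q_{[r]-I} e$ together with the compatibility of $*$ with the dual gradings.
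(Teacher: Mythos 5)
Your proposal is correct and is exactly the ``direct computation'' the paper invokes without writing out: reduce to the model $V\cong W\otimes\mathrm{F}(r)$ via \cref{lem:irreducible} and \cref{prop:4}, tabulate the generator shifts on the basis $\{v_I\}$, and read off the equivalences from $e_I=q_Ie$, $\tilde e_I=q_{[r]-I}e$ and the compatibility of the Hodge star with the dual gradings. The one muddled spot is your remark that the shifts of $\tilde e,\tilde f,\tilde p_i,\tilde q_i$ must be ``measured against the dual length grading'' with a ``sign-of-grading flip'' --- condition (4) refers to the given gradings on $W$ and $V$, and your directly computed shifts $+r,-r,+1,-1$ already match it with no flip; the dual grading enters only through the $\mathrm{F}(r)$ factor in conditions (2) and (6), which you handle correctly elsewhere.
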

\begin{remark}
  We will abuse the notation to use $e,f,p_i,q_i$ to denote $\tilde{e},\tilde{f},\tilde{p}_i,\tilde{q}_i$ on the dual representation if there is no confusion.
\end{remark}

\section{The Chow correspondences on Grassmannians of complexes}
\label{sec:chow}
Let $X$ be a smooth variety over $\mathbb{C}$ and $\mc{E}$ be a rank $r$ Tor-amplitude $[0,1]$-perfect complex. Now we state the cohomological and Chow-theoretical version of \cref{eq:once}. We have to impose a $G$-\textbf{smoothness} assumption, to say that $\mc{E}$ is a $G$-smooth complex if for any $d\geq 0$, the Grassmannians $\mathrm{Gr}_X(\mc{E},d)$ and $\mathrm{Gr}_X(\mc{E}^{\vee}[1],d)$ are all smooth varieties (if not empty), and their dimensions are $\dim(X)-d(d-r)$ and $\mathrm{dim}(X)-d(d+r)$ respectively. The complex $\mc{U}_o$ over $\mc{M}_{H}(n)$ under the assumption \cref{assumptionS} is $G$-smooth by \cref{thm:smooth}.

\begin{theorem}
  \label{thm:4.4} Let $\mc{E}$ be a $G$-smooth complex on a smooth variety $X$. If $r=\mathrm{rank}(\mathcal{E})>0$, we define the grading shift for all integers $m$:
    \begin{align*}
    &\mb{CH}_+^{l}(m):= \mathrm{CH}^{l-\frac{1}{2}m(m-1)}(\mathrm{Gr}_X(\mc{E},m)), & \mb{CH}_+:=\bigoplus_{l,m\in \mb{Z}}\mb{CH}_+^{l}(m)\\
     &\mb{CH}_-^{l}(m):= \mathrm{CH}^{l-\frac{1}{2}m(m-1)-mr}(\mathrm{Gr}_X(\mc{E}^{\vee}[1],m)), & \mb{CH}_-:=\bigoplus_{l,m\in \mb{Z}}\mb{CH}_-^{l}(m).
  \end{align*}
Then we have an isomorphism of bigraded $\mathrm{Cl}(r)$-representations
  \begin{equation*}
    \mb{CH}_+ \cong \mb{CH}_-\otimes \mathrm{F}(r)
  \end{equation*}
  where the grading by the rank of quotients on the Grassmannians is compatible with the length grading on $\mathrm{F}(r)$, and the Chow group grading is compatible with the weight grading on $\mathrm{F}(r)$. A similar argument holds for Hodge cohomology if we replace the cohomological grading with the bigrading.
\end{theorem}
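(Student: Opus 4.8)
The plan is to reduce \cref{thm:4.4} to the abstract machinery of \cref{prop:4} by producing explicit geometric operators $\mf{e}_I$ and $\mf{f}_I$ on Chow groups that are semi-orthogonal with respect to a suitable total order $\mc{P}$, and then verify the key vanishing $\Gamma_{\mc{P}}=0$. First I would set up the geometric correspondences: for each $d$, the Grassmannian $\mathrm{Gr}_X(\mc{E},d)$ fibers over $X$, and there are natural incidence correspondences between $\mathrm{Gr}_X(\mc{E},d)$ and $\mathrm{Gr}_X(\mc{E}^\vee[1],d')$ coming from the fact that a rank-$r$ Tor-amplitude $[0,1]$ complex $\mc{E}\cong\{V\xrightarrow{s}W\}$ and its shifted dual $\mc{E}^\vee[1]\cong\{W^\vee\xrightarrow{s^\vee}V^\vee\}$ have Grassmannians related by projection/inclusion diagrams. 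Concretely, choosing a subset $I\subset[r]$ should correspond to a flag-type correspondence which increases the rank of the quotient by $\mathrm{len}(I)$ and acts on Chow degree by $\mathrm{wt}(I)$ (up to the triangular-number normalizations built into the definitions of $\mb{CH}_\pm^l(m)$); these are the maps $\mf{e}_I\colon \mb{CH}_-\to\mb{CH}_+$, with $\mf{f}_I$ the transposes (Gysin pushforward composed with flat pullback). The $G$-smoothness hypothesis is exactly what guarantees all these Grassmannians are smooth of the predicted dimension, so that the pushforward/pullback operations and the dimension bookkeeping make sense.

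Next I would prove the semi-orthogonality relation $\mf{f}_I\mf{e}_J=\delta_{IJ}\,\mathrm{id}$ for $I\ge_{\mc{P}}J$. This is a local computation over $X$: it reduces, by base change and the structure of $\mc{E}$, to the case where $\mc{E}$ is a genuine two-term complex of trivial bundles, and then further to the classical Schubert calculus on honest Grassmannian bundles $\mathrm{Gr}(r,d)$, where the relevant intersection numbers are standard. The total order $\mc{P}$ on subsets $I\subset[r]$ must be chosen so that the composite correspondences $\mf{f}_I\mf{e}_J$ for $I\ge_{\mc{P}}J$ land in a regime where the excess intersection vanishes or reduces to the identity — I expect the order induced by $(\mathrm{len}(I),\mathrm{wt}(I))$ or the lexicographic order on the associated partition via $\iota$ from \cref{eq:iotaa} to work. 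Then the vanishing $\Gamma_{\mc{P}}=\prod_{J}^{\mc{P}}(\mathrm{Id}-\mf{e}_J\mf{f}_J)=0$ should follow from a completeness/spanning statement: the classes $\mf{e}_I(\text{classes on }\mc{E}^\vee[1]\text{-Grassmannians})$ exhaust $\mb{CH}_+$, which again reduces to the classical fact that Schubert classes span the Chow groups of Grassmannian bundles, organized through the bijection $\iota$. Once $\Gamma_{\mc{P}}=0$ is established, \cref{prop:4} immediately yields the $\mathrm{Cl}(r)$-representation structure and the isomorphism $\hat e\colon \mb{CH}_-\otimes\mathrm{F}(r)\xrightarrow{\sim}\mb{CH}_+$; the grading compatibility (length grading $\leftrightarrow$ rank of quotient, weight grading $\leftrightarrow$ Chow degree) is then checked by confirming that $\mf{e}_I$ shifts the two gradings by $\mathrm{len}(I)$ and $\mathrm{wt}(I)$ respectively, which is criterion (5) of \cref{lem:grading2}.

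For the Hodge cohomology statement, the same argument runs verbatim: one replaces $\mathrm{CH}^\bullet$ with $\bigoplus_{p}H^q(\mathrm{Gr},\Omega^p)$, uses that smooth Grassmannian bundles have Hodge cohomology which is free over the base with the Schubert classes as a basis sitting in bidegree $(p,p)$, and notes that the correspondence operators are algebraic hence act on Hodge cohomology preserving the appropriate bigrading; the single Chow grading is replaced by the $p=q$ diagonal bigrading, as the statement indicates.

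The main obstacle will be the semi-orthogonality computation together with pinning down the correct total order $\mc{P}$: getting $\mf{f}_I\mf{e}_J=\delta_{IJ}$ for $I\ge_{\mc{P}}J$ requires carefully analyzing the fiber products of the incidence correspondences and controlling the excess intersection contributions (via the excess bundle formula) so that off-diagonal terms with $I>_{\mc{P}}J$ genuinely vanish rather than merely being lower-order. This is where the precise normalization shifts $-\tfrac12 m(m-1)$ and $-\tfrac12 m(m-1)-mr$ in the definitions of $\mb{CH}_\pm^l(m)$ must be matched exactly against the degrees of the Gysin maps. I expect the honest content of the proof to be an identification of these geometric correspondences with the algebraic operators $e_I,f_I$ of \cref{sec:444} after the reduction to Grassmannian bundles, at which point \cref{prop:4} and \cref{lem:grading2} do the remaining work.
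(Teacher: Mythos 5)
Your overall plan coincides with the paper's strategy (geometric incidence correspondences, orthogonalization via \cref{prop:4}, grading check via \cref{lem:grading2} and \cref{lem:ccccccc}), but two of your key steps would not go through as written. The most serious gap is the completeness statement $\Gamma_{\mc{P}}=0$: it does not reduce to ``Schubert classes span the Chow groups of Grassmannian bundles.'' When $\mc{E}$ has nontrivial degeneracy loci, $\mathrm{Gr}_X(\mc{E},d)$ is \emph{not} a Grassmannian bundle over $X$ (even locally it is cut out inside one), and moreover the source of the sought decomposition is $\mb{CH}_-$, built from the Grassmannians of $\mc{E}^{\vee}[1]$, not from $X$ itself. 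The paper has to quote this spanning statement as Theorem~1.1 of \cite{jiang2020chow}, a substantive external input, and then upgrades the resulting operator identity to an identity of \emph{correspondences} by applying it to $\mc{E}_T$ over $X\times T$ and invoking Manin's identity principle (\cref{thm:manin}). That cycle-level identity is precisely what makes the Hodge cohomology case follow, via compatibility of the cycle class map with correspondences; your Hodge argument instead leans again on a bundle-type freeness statement that is unavailable in this generality.

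The second gap concerns the semi-orthogonality relation itself. It is not a purely local computation: the vanishing of the off-diagonal composites $\Psi^I\Phi^J$ is a global statement, proved in \cref{prop:910} by a dimension estimate on $\mathrm{Gr}_X(\mc{E}^{\vee}[1],d_1)\times_X\mathrm{Gr}_X(\mc{E}^{\vee}[1],d_2)$ stratified by the degeneracy loci of $\mc{E}$ (\cref{thm:expected}, \cref{cor:nonempty}); this shows the composite correspondence vanishes for degree reasons unless $\mathrm{len}(I)=\mathrm{len}(J)$ and $\mathrm{wt}(I)=\mathrm{wt}(J)$, and that in the remaining case the diagonal is the unique top-dimensional component, so that $\Psi^I\Phi^J=a_{IJ}\,\mathrm{Id}$. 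Only the computation of the multiplicity $a_{IJ}$ is local (reduction to the universal complex over $\mathrm{Hom}(\mb{C}^{d_1},\mb{C}^{d_1+r})$, excess intersection, and Schubert calculus). Relatedly, your $\mf{f}_I$ cannot simply be ``the transpose'' of $\mf{e}_I$: in \cref{def:correspondence} the operator $\Psi^I$ is the correspondence $\Phi^{I^c}$, i.e.\ it carries the \emph{complementary} Schur class $\Delta_{\iota(I^c)^t}(\mc{U}_+)$, and it is exactly this complementarity that makes the local evaluation $\int\Delta_{\lambda^t}\Delta_{\mu^{ct}}=\delta_{\lambda\mu}$ produce $\delta_{IJ}$; with the naive transpose the diagonal pairing would not be the identity, and the chosen total order $\mc{P}$ (any order refining the weight) would not yield the required triangularity.
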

In \cref{sec:geom}, we will give the geometric correspondences. Given a class $\alpha\in \mathrm{CH}^*(X)$, let $\int_X \alpha\in \mb{Z}$ be the push-forward of $\alpha$ to a closed point.

\subsection{Geometric correspondences and the Clifford algebra action}
\label{sec:geom}
For every rank $n$ locally free sheaf $E$ on $X$, let $c_i(E)\in \mathrm{CH}^{i}(X)$
be the $i$-th Chern class of $E$ and $c_{top}(E):=c_n(E)$ be the top Chern class. For any partition $\lambda$, we define the Schur class
\begin{equation*}
\Delta_{\lambda}(E):= \mathrm{det}(c_{\lambda_i-i+j}(E))|_{1\leq i,j\leq l} \in \mathrm{CH}^{\mathrm{size}(\lambda)}(X),
\end{equation*}
where $\mathrm{size}(\lambda):=\sum_{i\in \mathbb{Z}}\lambda_i$.
Given a partition $\lambda\in B_{a,b}$ (defined in \cref{sec:cli}), the complement $\lambda^c$ and transpose $\lambda^t$ are defined as 
  \begin{gather*}
    \lambda^c:=(b-\lambda_a,b-\lambda_{a-1},\cdots,b-\lambda_1)\in B_{a,b},\quad 
    \lambda^t:=(\lambda_1^t,\cdots,\lambda_b^t)\in B_{b,a}, 
  \end{gather*}
  where $\lambda_i^t$ is the number of elements in  $\lambda$ with value $\geq i$. We notice that
    \begin{equation*}
    (\lambda^t)^t=\lambda, \quad (\lambda^c)^c=\lambda, \quad (\lambda^c)^t=(\lambda^t)^c.
    \end{equation*}
 We recall the bijection $\iota$ in \cref{eq:iotaa}. For any set $I\subset [r]$, let $ I^{c}:=\{r-1-i\mid i\in I\}$ so that $\iota(I)^c=\iota(I^c)$ for all $I\subset [r]$ where $b=r-d$. Under the bijection $\iota$, the weight function satisfies
\begin{equation}
  \label{eq:weight}
    \mathrm{wt}(I)=\mathrm{size}(\iota(I))+\frac{\mathrm{len}(I)(\mathrm{len}(I)-1)}{2}
\end{equation}
Given integers $a,b\geq 0$, let the partition $(a^{b})$ be $\underbrace{(a,a,\cdots,a)}_{b \text{ times}}$.

Now we define the geometric correspondences. Let $\mc{E}$ be a $G$-smooth complex such that $r:=\mathrm{rank}(\mc{E})>0$. Given integers $d_{-},d_+\geq 0$, we consider the incidence variety as the (classical) fiber product 
\begin{equation*}
    \mathrm{Inc}_{X}(\mc{E},d_-,d_+):=  \mathrm{Gr}_X(\mc{E}^{\vee}[1],d_-)\times_{X}\mathrm{Gr}_X(\mc{E},d_+).
\end{equation*}
By Theorem 1.3 of \cite{zhao2024degeneracy}, $\mathrm{Inc}_{X}(\mc{E},d_-,d_+)$ is a locally complete intersection variety and its dimension is
 \begin{equation}
    \label{lem:excess}
        \mathrm{dim}(\mathrm{Inc}_{X}(\mc{E},d_-,d_+))=\mathrm{dim}(X)-d_+(d_+-r)-d_-(r+d_-)+d_+d_-
    \end{equation}
For any integers $d_-,d_+\geq 0$, let $\mc{U}_+$ be the universal quotient bundle on  $\mathrm{Gr}_X(\mc{E},d_+)$. We consider the  fundamental class of $\mathrm{Inc}_{X}(\mc{E},d_-,d_+)$ in $\mathrm{Gr}_X(\mc{E}^{\vee}[1],d_-)\times \mathrm{Gr}_X(\mc{E},d_+)$:
\begin{equation}
  \label{eq:incidence}
  [\mathrm{Inc}_{X}(\mc{E},d_-,d_+)]\in \mathrm{CH}^{\mathrm{dim}(X)-d_-d_+}(\mathrm{Gr}_X(\mc{E}^{\vee}[1],d_-)\times \mathrm{Gr}_X(\mc{E},d_+)).
\end{equation}
\begin{definition}
    \label{def:correspondence}
  Given $d\geq 0$ and $0\leq j\leq r$, for any subset $I\subset [r]$, we define 
  \begin{align*}
    \Phi^{I}:=\Delta_{\iota(I)^t}(\mc{U}_+)\cap [\mathrm{Inc}_{X}(\mc{E},d-\mathrm{len}(I),d)], 
   \end{align*}
  as a correspondence in  $\mathrm{CH}^{*}(\mathrm{Gr}_X(\mc{E}^{\vee}[1],d-\mathrm{len}(I))\times \mathrm{Gr}_X(\mc{E},d))$. We also define the correspondence $\Psi^I$ as $\Phi^{I^c}$, but shift the order and regard it as a correspondence in $\mathrm{CH}^{*}(\mathrm{Gr}_X(\mc{E},d)\times \mathrm{Gr}_X(\mc{E}^{\vee}[1],d-\mathrm{len}(I)) )$.
\end{definition}

The following lemma follows from direct computations:
\begin{lemma}
    \label{lem:ccccccc}
    For any $I\subset [r]$, the cohomological degree of $\Phi^I$ and $\Psi^I$ are 
    \begin{align*}
        \mathrm{dim}(X)-d(d-\mathrm{len}(I))-\frac{\mathrm{len}(I)(\mathrm{len}(I)-1)}{2}+\mathrm{wt}(I), \\ \mathrm{dim}(X)-d(d-\mathrm{len}(I))+\mathrm{len}(I)r-\frac{\mathrm{len}(I)(\mathrm{len}(I)+1)}{2}-\mathrm{wt}(I).
    \end{align*}
    respectively. Moreover, after the grading shift, $\Psi^I_*$ maps  $\mb{CH}_{+}^{l}(d)$ to $\mb{CH}_{-}^{l-\mathrm{wt}(I)}(d-\mathrm{len}(I))$ and $\Phi^I_*$ maps $\mb{CH}_{-}^{l}(d-\mathrm{len}(I))$ to $\mb{CH}_{+}^{l+\mathrm{wt}(I)}(d)$.
\end{lemma}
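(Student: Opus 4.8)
The plan is to compute the two cohomological degrees by bookkeeping the codimensions of the three ingredients that build $\Phi^I$ and $\Psi^I$, namely the Schur class $\Delta_{\iota(I)^t}(\mc{U}_+)$, the incidence class $[\mathrm{Inc}_X(\mc{E},d-\mathrm{len}(I),d)]$, and (for the degree-shift statement) the grading shifts built into $\mb{CH}_\pm$. First I would record that, by definition, $\Delta_{\lambda}(\mc{U}_+)$ lives in $\mathrm{CH}^{\mathrm{size}(\lambda)}$, and that for a partition $\lambda=\iota(I)$ one has $\mathrm{size}(\lambda^t)=\mathrm{size}(\lambda)$, so $\Delta_{\iota(I)^t}(\mc{U}_+)$ has codimension $\mathrm{size}(\iota(I))$. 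Combining this with the identity \cref{eq:weight}, which rewrites $\mathrm{size}(\iota(I))=\mathrm{wt}(I)-\tfrac{\mathrm{len}(I)(\mathrm{len}(I)-1)}{2}$, converts the Schur contribution into the stated $\mathrm{wt}(I)$-and-$\mathrm{len}(I)$ expression. Next, by \cref{eq:incidence}, the fundamental class $[\mathrm{Inc}_X(\mc{E},d_-,d_+)]$ with $d_-=d-\mathrm{len}(I)$, $d_+=d$ sits in codimension $\dim(X)-d_-d_+$ inside the product of the two Grassmannians; since a correspondence of codimension $c$ in $\mathrm{Gr}_X(\mc{E}^\vee[1],d_-)\times\mathrm{Gr}_X(\mc{E},d_+)$ is said to have "cohomological degree" $\dim(X)-c$ in the convention used here (the degree in which $\Phi^I_*$ raises/lowers dimension), I would add the Schur codimension to $d_-d_+=d(d-\mathrm{len}(I))$ and simplify, which yields exactly the first displayed quantity. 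For $\Psi^I=\Phi^{I^c}$, the only change is replacing $I$ by $I^c$; using $\mathrm{len}(I^c)=r-\mathrm{len}(I)$ and $\mathrm{wt}(I^c)=\tfrac{r(r-1)}{2}-\mathrm{wt}(I)$ (which follows from $\iota(I)^c=\iota(I^c)$ and the complement formula for partitions), substituting into the first formula and simplifying produces the second displayed quantity.

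For the degree-shift statement, I would start from the definitions in \cref{thm:4.4}: $\mb{CH}_+^l(d)=\mathrm{CH}^{l-\frac12 d(d-1)}(\mathrm{Gr}_X(\mc{E},d))$ and $\mb{CH}_-^l(d-\mathrm{len}(I))=\mathrm{CH}^{l-\frac12(d-\mathrm{len}(I))(d-\mathrm{len}(I)-1)-(d-\mathrm{len}(I))r}(\mathrm{Gr}_X(\mc{E}^\vee[1],d-\mathrm{len}(I)))$. A correspondence $\gamma\in\mathrm{CH}^c(Y\times Z)$ induces $\gamma_*\colon\mathrm{CH}^k(Y)\to\mathrm{CH}^{k+c-\dim(Z)}(Z)$ (pull back, intersect, push forward), so I would apply this with $Y=\mathrm{Gr}_X(\mc{E},d)$, $Z=\mathrm{Gr}_X(\mc{E}^\vee[1],d-\mathrm{len}(I))$ for $\Psi^I_*$ (and the reverse for $\Phi^I_*$), using the dimension formulas for the Grassmannians that are part of the $G$-smoothness hypothesis, i.e. $\dim\mathrm{Gr}_X(\mc{E},d)=\dim(X)-d(d-r)$ and $\dim\mathrm{Gr}_X(\mc{E}^\vee[1],d)=\dim(X)-d(d+r)$. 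Plugging the codimension of $\Psi^I$ (the second displayed formula, but with the roles of the factors swapped so it is read inside $\mathrm{Gr}_X(\mc{E},d)\times\mathrm{Gr}_X(\mc{E}^\vee[1],d-\mathrm{len}(I))$) together with these dimensions into $k\mapsto k+c-\dim(Z)$, and then matching the source index $l-\tfrac12 d(d-1)$ to the target index of $\mb{CH}_-$, the quadratic-in-$d$ and linear-in-$r$ terms should cancel and leave a net shift of $-\mathrm{wt}(I)$ in the superscript $l$ and $-\mathrm{len}(I)$ in the argument $d$; the computation for $\Phi^I_*$ is the mirror image and gives $+\mathrm{wt}(I)$, $+\mathrm{len}(I)$.

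None of this is conceptually hard — it is entirely a matter of careful arithmetic with the several competing conventions (codimension versus "cohomological degree" of a correspondence, the $-\tfrac12 m(m-1)$ and $-mr$ shifts baked into $\mb{CH}_\pm$, and the $\iota$, transpose, complement dictionary between subsets and partitions). The main obstacle, and the one place where a sign or an off-by-one error is easy to make, will be keeping straight the direction in which each correspondence acts and which Grassmannian plays the role of the target in the formula $\gamma_*\colon\mathrm{CH}^k(Y)\to\mathrm{CH}^{k+c-\dim Z}(Z)$; in particular for $\Psi^I$ one must remember that the order of the two factors has been deliberately swapped relative to $\Phi^{I^c}$, so that $\Psi^I_*$ genuinely goes from $\mathrm{Gr}_X(\mc{E},d)$ to $\mathrm{Gr}_X(\mc{E}^\vee[1],d-\mathrm{len}(I))$. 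Once the conventions are pinned down, the identity \cref{eq:weight} together with the complement formulas does all the remaining work, so I would present the proof as a short explicit substitution rather than belabor the "direct computation" it is advertised to be.
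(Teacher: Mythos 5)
Your overall plan is the right one: the paper offers no written argument beyond ``direct computations,'' and the intended computation is exactly your bookkeeping — the class $\Phi^I$ has codimension equal to the codimension $\dim(X)-d(d-\mathrm{len}(I))$ of the incidence class from \cref{eq:incidence} plus the size of the Schur class, converted via \cref{eq:weight}, which indeed yields the first displayed degree (note, though, that the ``cohomological degree'' in the lemma is just this codimension in the product; your parenthetical claiming it is $\dim(X)-c$ for a correspondence of codimension $c$ is not the convention in play, even though your arithmetic for $\Phi^I$ lands correctly). The genuine problem is your treatment of $\Psi^I$: in this paper $I^c:=\{\,r-1-i\mid i\in I\,\}$ is the \emph{reflection} of $I$, not the set-theoretic complement, so $\mathrm{len}(I^c)=\mathrm{len}(I)$ and $\mathrm{wt}(I^c)=\mathrm{len}(I)(r-1)-\mathrm{wt}(I)$, rather than the identities $\mathrm{len}(I^c)=r-\mathrm{len}(I)$ and $\mathrm{wt}(I^c)=\tfrac{r(r-1)}{2}-\mathrm{wt}(I)$ you use. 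With your identities the substitution does \emph{not} produce the second displayed quantity (it differs by $d(r-2\,\mathrm{len}(I))$), and it would also place $\Phi^{I^c}$ on $\mathrm{Gr}_X(\mc{E}^{\vee}[1],d-r+\mathrm{len}(I))\times\mathrm{Gr}_X(\mc{E},d)$, contradicting the target $\mathrm{Gr}_X(\mc{E}^{\vee}[1],d-\mathrm{len}(I))$ that you yourself correctly insist on at the end. With the correct identities the Schur class $\Delta_{\iota(I^c)^t}$ has codimension $\mathrm{len}(I)r-\tfrac{\mathrm{len}(I)(\mathrm{len}(I)+1)}{2}-\mathrm{wt}(I)$ and the second displayed degree falls out immediately.

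The degree-shift verification also rests on a wrong formula: a correspondence $\gamma\in\mathrm{CH}^c(Y\times Z)$ induces $\gamma_*\colon\mathrm{CH}^k(Y)\to\mathrm{CH}^{\,k+c-\dim Y}(Z)$ — one subtracts the dimension of the \emph{source} $Y$ (pullback preserves codimension, intersecting with $\gamma$ adds $c$, and the pushforward is governed by the dimension grading), not $\dim Z$ as you wrote. Since $\dim\mathrm{Gr}_X(\mc{E},d)=\dim(X)-d(d-r)$ and $\dim\mathrm{Gr}_X(\mc{E}^{\vee}[1],d-\mathrm{len}(I))=\dim(X)-(d-\mathrm{len}(I))(d-\mathrm{len}(I)+r)$ differ in general, using $k+c-\dim Z$ the promised cancellation does not occur (for $\Phi^I_*$ the outcome is off by $(2d-\mathrm{len}(I))(\mathrm{len}(I)-r)$). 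With $k+c-\dim Y$, the $G$-smooth dimension formulas, and the corrected codimension of $\Psi^I$, the shifts $l\mapsto l-\mathrm{wt}(I)$, $d\mapsto d-\mathrm{len}(I)$ for $\Psi^I_*$ and $l\mapsto l+\mathrm{wt}(I)$, $d\mapsto d+\mathrm{len}(I)$ for $\Phi^I_*$ do check out, so the lemma itself is fine; it is these two conventions — the meaning of $I^c$ in \cref{def:correspondence} and which dimension is subtracted in the correspondence formula — that your write-up needs to fix before the ``short explicit substitution'' closes up.
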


The key point to prove \cref{thm:4.4} is the 
following semi-orthogonal relation, which will be proved in \cref{sec:prop:910}.
\begin{proposition}
    \label{prop:910} Let $\mc{P}$ be a total order on $\{I|I\subset [r]\}$ such that if $I\geq_{\mc{P}}J$, then $\mathrm{wt}(I)\geq \mathrm{wt}(J)$. Then
  \begin{align}
\label{rule1c}
  \Psi^I \circ \Phi^J =\delta_{IJ} \operatorname{Id} \text{ if } I\geq_{\mc{P}}J. \\
    \label{rule2c} \prod_{I\subset [r]}^{\mc{P}}(1-\Phi^{I}\Psi^{I}) =0
  \end{align}
\end{proposition}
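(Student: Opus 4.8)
The plan is to compute the composition $\Psi^I\circ\Phi^J$ and the self-correspondences $\Phi^I\circ\Psi^I$ directly, by intersection theory on the (iterated) incidence varieties, using the excess-intersection description behind \cref{lem:excess} and classical Schubert calculus on the Grassmannian bundles occurring as their fibres. Structurally, \cref{rule1c} and \cref{rule2c} are the cycle-class incarnations of, respectively, the semi-orthogonality and the fullness of the semi-orthogonal decomposition of $\mathrm{D}^b_{coh}(\mathrm{Gr}_X(\mc E,d))$ attached to the Fourier--Mukai kernels of \cite{jiang2023derived}; that decomposition motivates the argument, but we carry it out with Chow groups.

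First I would reduce to a global two-term presentation. The Grassmannians $\mathrm{Gr}_X(\mc E,-)$, $\mathrm{Gr}_X(\mc E^\vee[1],-)$, the incidence varieties, the universal quotients, and hence the classes $\Phi^I,\Psi^I$ of \cref{def:correspondence}, are all compatible with base change, so it suffices to treat the universal $G$-smooth complex. Choosing $\mc E\simeq[\mc V\xrightarrow{s}\mc W]$ with $\mc V,\mc W$ locally free, one realises $\mathrm{Gr}_X(\mc E,d)$ and $\mathrm{Gr}_X(\mc E^\vee[1],d')$ as zero loci of tautological sections inside the honest Grassmannian bundles $\mathrm{Gr}_X(\mc W,d)$ and $\mathrm{Gr}_X(\mc V^\vee,d')$, and $\mathrm{Inc}_X(\mc E,d_-,d_+)$ as the common zero locus inside $\mathrm{Gr}_X(\mc V^\vee,d_-)\times_X\mathrm{Gr}_X(\mc W,d_+)$; under $G$-smoothness its excess over the expected dimension is the explicit rank-$d_-d_+$ bundle of \cite{zhao2024degeneracy}, so $[\mathrm{Inc}_X(\mc E,d_-,d_+)]$ in the product is a refined Euler class. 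This turns every composition below into an intersection computation on Grassmannian bundles over $X$, with all excess contributions written explicitly in Chern classes of tautological bundles.

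For \cref{rule1c}: the composition $\Psi^I\circ\Phi^J$ is supported, as a cycle, on the double incidence variety $\mathrm{Gr}_X(\mc E^\vee[1],d-\mathrm{len}(J))\times_X\mathrm{Gr}_X(\mc E,d)\times_X\mathrm{Gr}_X(\mc E^\vee[1],d-\mathrm{len}(I))$, with the universal quotient $\mc U_+$ of the middle factor entering through $\Delta_{\iota(J)^t}(\mc U_+)\cdot\Delta_{(\iota(I)^t)^c}(\mc U_+)$ and with the two incidence conditions contributing the excess classes of the previous step. Forgetting the middle factor is, by the projection formula, a fibrewise integration over the fibres of $\mathrm{Gr}_X(\mc E,d)\to X$; the integral there of a product of two Schur classes of the universal quotient against the ambient classes is governed by Schubert duality, $\int\Delta_\alpha\Delta_\beta=\delta_{\beta,\alpha^c}$. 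Combined with $\iota(I)^c=\iota(I^c)$, $(\lambda^c)^t=(\lambda^t)^c$ and \cref{eq:weight}, the outcome vanishes unless $\iota(J)^t$ and $(\iota(I)^t)^c$ are Poincar\'e-complementary, i.e. unless $I=J$, when it equals $1$; for $I>_{\mc P}J$ the hypothesis on $\mc P$ and \cref{lem:ccccccc} force a weight or length mismatch, hence vanishing, and for $I=J$ the surviving term is the identity correspondence. This gives \cref{rule1c}.

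For \cref{rule2c}: once \cref{rule1c} holds, $\mf e_I:=\Phi^I$ and $\mf f_I:=\Psi^I$ are semi-orthogonal with respect to $\mc P$, so the orthogonalized operators $e_I,f_I$ of \cref{e9} are defined and, by \cref{eq:GammaP2}, $\prod_{I\subset[r]}^{\mc P}(1-\Phi^I\Psi^I)=\operatorname{Id}-\sum_{I\subset[r]}e_If_I$; hence \cref{rule2c} is equivalent to the assertion that the geometric projectors $e_If_I$, summed over all $I\subset[r]$, reconstitute the diagonal class of $\mathrm{Gr}_X(\mc E,d)$. Using the zero-locus descriptions and refined Euler classes of the first step, this becomes an identity of Schur polynomials in Chern classes of tautological bundles over $X$ whose fibrewise content is the classical resolution of the diagonal of a Grassmannian bundle by Poincar\'e-dual Schubert classes: over the locus where $\mc E$ is locally free the fibre of $\mathrm{Gr}_X(\mc E,d)$ is $\mathrm{Gr}(r,d)$, only the $I$ with $\mathrm{len}(I)=d$ survive, and $\sum_{\mathrm{len}(I)=d}e_If_I$ restricts to $\sum_\lambda\sigma_\lambda\boxtimes\sigma_{\lambda^c}=[\Delta_{\mathrm{Gr}(r,d)}]$ via the bijection $\iota$; the global identity is then obtained by carrying the same symmetric-function computation out over all of $X$, the excess classes accounting for the degeneracy loci of $\mc E$. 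Then $\Gamma_{\mc P}=[\Delta]-[\Delta]=0$. I expect \cref{rule2c} to be the main obstacle, together with, throughout both parts, the uniform bookkeeping of the $d_-d_+$-dimensional excess of the iterated incidence varieties --- one must check that these excess classes combine with the Schur-class twists to reproduce exactly the Schubert-duality and diagonal-resolution identities globally, not merely on fibres.
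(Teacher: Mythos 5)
Your overall architecture (degree bookkeeping plus Schubert calculus for \cref{rule1c}; the orthogonalization identity \cref{eq:GammaP2} plus a diagonal decomposition for \cref{rule2c}) points in the right direction, but both halves have genuine gaps at exactly the places where the difficulty of the statement lives.

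For \cref{rule1c}, your vanishing argument for $I>_{\mc P}J$ is not valid as stated. A weight or length mismatch only shows that $\Psi^I\Phi^J$ has the wrong cohomological degree to be a multiple of the identity; it does not show that the correspondence is zero (e.g.\ $\Delta_\lambda(\mc U)\cap[\Delta]$ is a nonzero correspondence of nonzero degree). Likewise, the ``fibrewise integration over the fibres of $\mathrm{Gr}_X(\mc E,d)\to X$'' with Schubert duality $\int\Delta_\alpha\Delta_\beta=\delta_{\beta,\alpha^c}$ is not available, because this map is not a fibration: over the deeper degeneracy strata $X^{=k}(\mc E)$ the fibres are larger Grassmannians and the fibre integrals need not vanish there. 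The missing ingredient is a dimension estimate: the composite is a cycle of dimension $\beta_1$ supported on $\mathrm{Gr}_X(\mc E^\vee[1],d_1)\times_X\mathrm{Gr}_X(\mc E^\vee[1],d_2)$, whose dimension $\beta_2$ one computes by stratifying over the $X^{=k}$ (using \cref{thm:expected} and \cref{cor:nonempty}); one finds $\beta_1\ge\beta_2$ with equality only when $d_1=d_2$ and $\mathrm{wt}(I)=\mathrm{wt}(J)$, forcing the cycle to vanish otherwise and to be an integer multiple $a_{IJ}$ of the diagonal in the equality case. The Schubert-duality computation you describe then enters only to evaluate $a_{IJ}=\delta_{IJ}$, and it is carried out not ``fibrewise'' but after localizing at a point of the deepest stratum $X^{=d_1}(\mc E^\vee[1])$ and replacing $X$ by the local model $\mathrm{Hom}(\mb C^{d_1},\mb C^{d_1+r})$, where the incidence variety becomes an honest Grassmannian and the excess bundle is explicit.

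For \cref{rule2c}, your reduction to $\operatorname{Id}-\sum_I e_If_I$ via \cref{eq:GammaP2} is correct, but the asserted global ``resolution of the diagonal'' is only checked over the open locus where $\mc E$ is locally free; extending it across the degeneracy loci is precisely the hard content, and you flag it without supplying an argument. The economical route is to first prove the operator-level statement $\prod^{\mc P}(1-\Phi^I_*\Psi^I_*)=0$ on Chow groups, which by \cref{prop:4} is equivalent to $\bigoplus_I\Phi^I_*$ being an isomorphism --- this is exactly Theorem 1.1 of \cite{jiang2020chow} and need not be re-derived --- and then upgrade the operator identity to the cycle identity \cref{rule2c} by applying the operator statement to $\mc E_T$ over $X\times T$ for all smooth $T$ and invoking Manin's identity principle (\cref{thm:manin}). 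Both of these steps are absent from your proposal, and without them the proof of \cref{rule2c} is incomplete.
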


\begin{proof}[Proof of \cref{thm:4.4}]
We let $e_I$ and $f_I$ be the orthogonalization of $\Psi_*^I$ and $\Phi_*^I$ for all $I\subset [r]$. Then \cref{thm:4.4} directly follows from \cref{prop:4} and \cref{prop:910}. The compatibility of gradings follows from \cref{lem:grading2} and \cref{lem:ccccccc}.

For Hodge cohomology groups, we note that the cycle class map $\mathrm{Hdg}:\mathrm{CH}^p(X)\to H^{p,p}(X,\mb{Q})$ for smooth projective varieties is compatible with the correspondences. Hence we obtain \cref{thm:4.4} for Hodge cohomology groups.
\end{proof}

  \begin{remark}
    \label{rem:dualrep}
    We should remark that we will use the dual representation on $\mb{CH}_+$ more often. The reason is that for the dual representation, the operators \begin{equation*}
    e:\mb{CH}_-^{l}(m)\to \mb{CH}_+^{l+\frac{1}{2}r(r-1)}(m+r), \quad f:\mb{CH}_+^{l+\frac{1}{2}r(r-1)}(m+r)\to \mb{CH}_{-}^{l}(m)
  \end{equation*}
 are correspondences between birational varieties $\mathrm{Gr}_X(\mc{E}^\vee[1],m)$ and $\mathrm{Gr}_X(\mc{E},m+r)$ induced by the fundamental class
  \begin{equation*}
      [\mathrm{Ind}_X(\mc{E},m,m+r)]\in \mathrm{CH}^*(\mathrm{Gr}_X(\mc{E}^\vee[1],m)\times \mathrm{Gr}_X(\mc{E},m+r)).
  \end{equation*}
\end{remark}
\subsection{The semi-orthogonal relations}
\label{sec:prop:910} The proof of \cref{prop:910} is similar to \cite
{nakajima1997heisenberg}, where the key point is a dimension estimate on the incidence varieties. First, we recall the relation between the $G$-smooth property and the degeneracy loci:
\begin{lemma}[Part of Theorem 1.3 of \cite{zhao2024degeneracy}]
     \label{thm:expected} If $\mc{E}$ is a $G$-smooth complex of rank $r$, for any $d\in \mb{Z}$, we consider the $d$-th (Fitting) degeneracy locus
  \begin{equation*}
    X^{\geq d}(\mc{E}):=\{x\in X \mid \mathrm{rank}(h^{0}(\mc{E}_x))\geq d\}, \quad X^{=d}(\mc{E}):=X^{\geq d}(\mc{E})\backslash X^{\geq d+1}(\mc{E}).
  \end{equation*}
Then for any $d\geq 0$,  $X^{=d}(\mc{E})$ is a locally complete intersection variety and 
  \begin{equation*}
          \min\{\mathrm{dim}(X)-d(d-r),\mathrm{dim}(X)\}=\mathrm{dim}(X^{=d}(\mc{E}))= \mathrm{dim}(X^{\geq d}(\mc{E})).
  \end{equation*}
\end{lemma}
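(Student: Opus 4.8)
The plan is to derive this statement from the structure theory of Tor-amplitude $[0,1]$-complexes together with the $G$-smoothness hypothesis, in the spirit of the classical Thom--Porteous / Eagon--Northcott story but with the twist that the Grassmannians, not the degeneracy loci themselves, are assumed smooth of expected dimension. First I would reduce to a local model: since $\mc{E}$ is Tor-amplitude $[0,1]$ of rank $r$, étale-locally (or Zariski-locally) on $X$ it is represented by a two-term complex $\{V \xrightarrow{s} W\}$ of locally free sheaves with $\operatorname{rank}(W)-\operatorname{rank}(V)=r$; write $a=\operatorname{rank}(V)$, $b=a+r=\operatorname{rank}(W)$. Then $h^0(\mc{E})=\operatorname{coker}(s)$, and $X^{\geq d}(\mc{E})$ is exactly the locus where $\operatorname{rank}(s)\leq b-d$, i.e. the classical determinantal locus $D_{b-d}(s)$ attached to the map $s\colon V\to W$ of the generic rank condition; note $X^{\ge d}(\mc E)=X$ automatically for $d\le 0$, which handles that case, so assume $d\ge 1$.

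Next I would make the bridge to the Grassmannians explicitly. There is a surjection $\mathrm{Gr}_X(\mc{E},d)\to X^{\ge d}(\mc E)$ whose fiber over a point $x$ with $\operatorname{rank}(h^0(\mc E_x))=e\ge d$ is the ordinary Grassmannian $\mathrm{Gr}(e,d)$ of $d$-dimensional quotients, of dimension $d(e-d)$; thus over the open stratum $X^{=d}(\mc E)$ this map is an isomorphism, while over deeper strata the fibers are positive-dimensional. Running the same analysis with $\mc E^\vee[1]$, whose local model is the dual map $s^\vee\colon W^\vee\to V^\vee$ and whose rank is $-r$ (so one uses the convention for negative rank), gives a surjection $\mathrm{Gr}_X(\mc E^\vee[1],d)\to X^{\ge d}(\mc E^\vee[1])$. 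The combinatorial input — carried out in \cite{zhao2024degeneracy} and which I would cite rather than redo — is the dimension bookkeeping that relates these two degeneracy loci and the two Grassmannians: one checks that $X^{=d}(\mc E)$ (resp. its closure) coincides set-theoretically with the image of $\mathrm{Gr}_X(\mc E,d)$ minus the image of $\mathrm{Gr}_X(\mc E,d+1)$, and that the $G$-smoothness hypothesis — $\dim\mathrm{Gr}_X(\mc E,d)=\dim(X)-d(d-r)$ for all $d$ — forces the degeneracy locus to have the expected dimension $\min\{\dim(X)-d(d-r),\dim(X)\}$. Indeed, $\dim X^{\ge d}(\mc E)=\max_{e\ge d}\big(\dim\mathrm{Gr}_X(\mc E,e)-d(e-d)\big)$ by the fiber-dimension formula above, and substituting the assumed dimensions of the Grassmannians, the quadratic $e\mapsto \dim(X)-e(e-r)-d(e-d)$ in $e$ is maximized (over $e\ge d$) at $e=d$ when $d\ge 1$ and $r\ge 1$ — here is where the sign of $r$ matters — yielding exactly $\dim(X)-d(d-r)$, or $\dim(X)$ if the formula would exceed it (which happens when $d(d-r)<0$, i.e. $1\le d< r$).

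For the local complete intersection assertion I would argue that on the smooth open locus $X^{=d}(\mc E)$ the stratum is cut out (in a neighborhood, after the local presentation) by the vanishing of the appropriate minors, and Theorem 1.3 of \cite{zhao2024degeneracy} already packages precisely the statement that the strata $X^{=d}(\mc E)$ are l.c.i.; so I would phrase the proof as: "This is the restriction to our setting of Theorem 1.3 of \cite{zhao2024degeneracy}; we spell out the dimension count." The honest content left to us is thus just the two short verifications — (i) the identification of $X^{\ge d}(\mc E)$ with the image of the $d$-th Grassmannian and the fiber-dimension formula, and (ii) the elementary optimization of the quadratic in $e$ — together with the observation that $\dim X^{=d}=\dim X^{\ge d}$ because the extra strata $X^{\ge d+1}$ have strictly smaller dimension (again by the same optimization, since replacing $d$ by $d+1$ strictly decreases $\dim(X)-(d{+}1)(d{+}1{-}r)$ once $d\ge \tfrac{r-1}{2}$, and when $d$ is small both equal $\dim X$ only if $X^{\ge d+1}$ is still all of $X$, which is excluded for $d+1\ge 1$ unless $\mc E$ has rank $\ge d+1$ everywhere, a case absorbed into the "$=\dim X$" branch). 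The main obstacle is purely expository: making sure the negative-rank conventions for $\mc E^\vee[1]$ and the "$\min$ with $\dim X$" truncation are handled uniformly so that the single quadratic-optimization argument covers all four assertions (l.c.i.\ and the two dimension equalities, for both $\mc E$ and implicitly $\mc E^\vee[1]$) without case explosion; there is no serious geometric difficulty beyond what \cite{zhao2024degeneracy} supplies.
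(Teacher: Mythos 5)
There is no internal proof in the paper to compare against: \cref{thm:expected} is imported verbatim as part of Theorem 1.3 of \cite{zhao2024degeneracy} and is used as a black box (only through \cref{cor:nonempty} and the dimension estimates in the proof of \cref{prop:910}). So your decision to defer the locally-complete-intersection statement, and ultimately the whole lemma, to that reference is exactly the paper's approach. What needs scrutiny is the dimension count you present as the ``honest content'' you would supply yourself, and that part has genuine gaps.

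First, your displayed fiber-dimension formula $\dim X^{\ge d}(\mc{E})=\max_{e\ge d}\bigl(\dim\mathrm{Gr}_X(\mc{E},e)-d(e-d)\bigr)$ does not follow from the stratified fibration you describe; the correct relation is $\dim\mathrm{Gr}_X(\mc{E},d)=\max_{e\ge d}\bigl(\dim X^{=e}(\mc{E})+d(e-d)\bigr)$, and from $G$-smoothness this a priori yields only the upper bound $\dim X^{=e}(\mc{E})\le \dim X-e(e-r)$ (take $d=e$). Second, the claim that the quadratic $e\mapsto\dim X-e(e-r)-d(e-d)$ is maximized over $e\ge d$ at $e=d$ whenever $d\ge 1$ and $r\ge 1$ is false: its critical point is $e=(r-d)/2$, so this holds only when $3d\ge r$ (for small $d$ your $\min\{\cdot,\dim X\}$ truncation happens to rescue the numerical bound, but the stated justification is wrong). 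Third, and most seriously, even a corrected version of this argument only bounds $\dim X^{\ge d}$ from above, whereas the lemma asserts equalities — in particular that the open stratum $X^{=d}$ is nonempty (when $X^{\ge d}$ is, in the range $d\ge r$) and attains the expected dimension, which is precisely what \cref{cor:nonempty} later extracts. Nothing in your sketch provides this lower bound; you would need either the classical determinantal codimension estimate (every component of $X^{\ge d}$ has codimension at most $d(d-r)$), or an extra argument from $G$-smoothness: in $\dim\mathrm{Gr}_X(\mc{E},d)=\max_{e\ge d}(\dim X^{=e}+d(e-d))$ the maximizing stratum must satisfy $(e-d)(e-r)\le 0$ because of the upper bound $\dim X^{=e}\le\dim X-e(e-r)$, hence $e=d$ when $d\ge r$. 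Finally, the l.c.i.\ assertion cannot be obtained from ``cut out by the appropriate minors'': determinantal loci are Cohen--Macaulay but not locally complete intersections in general, so that statement genuinely has to come from \cite{zhao2024degeneracy}, as you in effect concede.
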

\begin{corollary}
  \label{cor:nonempty}
  If $\mc{E}$ is a $G$-smooth complex of rank $r$, then for any $d\geq \max\{r, 0\}$, $X^{\geq d}(\mc{E})$ is non-empty if and only if $X^{=d}(\mc{E})$ is non-empty.
\end{corollary}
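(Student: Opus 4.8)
The plan is to derive \cref{cor:nonempty} directly from the dimension formula of \cref{thm:expected}, using no further geometry. The easy implication is immediate: since $X^{=d}(\mc{E})\subseteq X^{\geq d}(\mc{E})$ by the very definition of these loci, if $X^{=d}(\mc{E})$ is non-empty then so is $X^{\geq d}(\mc{E})$.

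For the converse, the key structural observation I would exploit is that $X^{\geq d+1}(\mc{E})$ is a closed subscheme of $X^{\geq d}(\mc{E})$ and that $X^{=d}(\mc{E})$ is exactly the open complement $X^{\geq d}(\mc{E})\setminus X^{\geq d+1}(\mc{E})$; hence $X^{=d}(\mc{E})=\emptyset$ if and only if $X^{\geq d}(\mc{E})=X^{\geq d+1}(\mc{E})$. Assuming $X^{\geq d}(\mc{E})\neq\emptyset$, I would argue by contradiction: if $X^{=d}(\mc{E})=\emptyset$, then $X^{\geq d+1}(\mc{E})=X^{\geq d}(\mc{E})$ is in particular non-empty, so \cref{thm:expected} applies to both $d$ and $d+1$. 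Since $d\geq\max\{r,0\}$, both $d(d-r)$ and $(d+1)(d+1-r)$ are non-negative, so the minima occurring in \cref{thm:expected} are attained by $\dim(X)-d(d-r)$ and $\dim(X)-(d+1)(d+1-r)$ respectively. Now $(d+1)(d+1-r)-d(d-r)=2d+1-r\geq d+1>0$, where the first inequality uses $d\geq r$ and the second uses $d\geq 0$. Therefore $\dim X^{\geq d+1}(\mc{E})<\dim X^{\geq d}(\mc{E})$, which contradicts the equality $X^{\geq d+1}(\mc{E})=X^{\geq d}(\mc{E})$, completing the proof.

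I do not anticipate a genuine obstacle here. The one point requiring care is that the dimension comparison coming from \cref{thm:expected} must be invoked on \emph{non-empty} loci; this is precisely why the argument is organized by contradiction, so that $X^{\geq d+1}(\mc{E})$, being equal to the non-empty $X^{\geq d}(\mc{E})$, is itself non-empty and \cref{thm:expected} applies to it verbatim. It is also worth recording that the strict inequality $2d+1-r>0$ is exactly where the hypothesis $d\geq\max\{r,0\}$ enters: for smaller $d$ the loci $X^{\geq d}(\mc{E})$ and $X^{\geq d+1}(\mc{E})$ can coincide while being non-empty (e.g.\ when $\mc{E}$ has constant rank $>d$ on a component), so the restriction on $d$ is essential.
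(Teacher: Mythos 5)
Your proof is correct and follows the same route as the paper: the paper's one-line argument is precisely that $\dim(X^{\geq d}(\mc{E}))=\dim(X)-d(d-r)$ is strictly decreasing in $d$ once $d\geq\max\{r,0\}$, so $X^{\geq d}(\mc{E})$ and $X^{\geq d+1}(\mc{E})$ cannot coincide while non-empty. Your write-up just makes explicit the contradiction structure and the care needed to apply \cref{thm:expected} only to non-empty loci.
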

\begin{proof}
  We only need to notice that when $d\geq \max\{r, 0\}$, $\mathrm{dim}(X^{\geq d}(\mc{E}))=\mathrm{dim}(X)-d(d-r)$ is a strictly decreasing function of $d$.
\end{proof}

\begin{proof}[Proof of \cref{prop:910}]
  Assuming \cref{rule1c}, to prove  \cref{rule2c}, we first prove that
    \begin{align}
    \label{rule2} \prod_{I\subset [r]}^{\mc{P}}(1-\Phi^{I}_*\Psi^{I}_*) =0.
  \end{align}
  By \cref{prop:4}, we only need to know that the morphism 
 \begin{equation*}
   \bigoplus_{j=0}^r\bigoplus_{I\subset [r],\mathrm{len}(I)=j}\mb{CH}_-^{l-\mathrm{wt}(I)}(d-j)\xrightarrow{\Phi^I_*}\mb{CH}_+^{l}(d)
    \end{equation*}
 is an isomorphism for any $d\in \mb{Z}$. It is precisely Theorem 1.1 in \cite{jiang2020chow}. To lift \cref{rule2} to \cref{rule2c}, we notice that for any smooth variety $T$, $\mc{E}_T$ is $G$-smooth over $X\times T$, where $\mc{E}_T$ is the pullback of $\mc{E}$ from $X$ to $X\times T$. Hence \cref{rule2c} follows from \cref{rule2} and Manin's identity principle (\cref{thm:manin}).

 Now we prove \cref{rule1c}.  We assume that $\mathrm{wt}(I)\geq \mathrm{wt}(J)$. Let $d_1:=d-\mathrm{len}(J),d_2:=d-\mathrm{len}(I)$. By \cref{def:correspondence} and direct computation, the homological degree of the correspondence $\Psi^I\Phi^J$ is 
 \begin{equation*}
    \beta_1:=\dim(X)-\frac{d_1^2+d_2^2}{2}+\frac{d_1-d_2}{2}-d_1r+\mathrm{wt}(J)-\mathrm{wt}(I). 
 \end{equation*}
On the other hand, we prove that $\dim(\mathrm{Gr}_{X}(\mc{E}^{\vee}[1],d_1)\times_X \mathrm{Gr}_{X}(\mc{E}^{\vee}[1],d_2))$ is
  \begin{equation}
    \label{eq:dimform}
  \beta_2:=\mathrm{dim}(X)-d_2^2-d_1^2+d_2d_1-\max\{d_1,d_2\}r.
  \end{equation} 
if the fiber product is not empty. By \cref{cor:nonempty}, we can assume that the variety $X^{=\max\{d_1,d_2\}}(\mc{E}^{\vee}[1])$ is non-empty. We consider the projection
  \begin{equation*}
    \mathrm{Gr}_{X}(\mc{E}^{\vee}[1],d_1)\times_X \mathrm{Gr}_{X}(\mc{E}^{\vee}[1],d_2)\to X.
    \end{equation*}
The fiber over $X^{=k}(\mc{E})$ is not empty only if $k\geq \max\{d_1,d_2\}$, where it is a $\mathrm{Gr}(k,d_1)\times \mathrm{Gr}(k,d_2)$-bundle, and hence by \cref{thm:expected} has dimension
    \begin{align}
        \label{eq:dimcalc}
     \mathrm{dim}(X)-k(k+r)+d_1(k-d_1)+d_2(k-d_2)
    \end{align}
    which is a strictly decreasing function on $k$ when $k\geq \max\{d_1,d_2\}$. Hence the maximum dimension is attained when $k=\max\{d_1,d_2\}$, and \cref{eq:dimform} is proved. We notice that $\beta_1-\beta_2$ is 
\begin{equation*}
    \frac{(d_1-d_2)^2}{2}+\frac{(d_1-d_2)}{2}-(d_1-\max\{d_1,d_2\})r+\mathrm{wt}(J)-\mathrm{wt}(I)\geq 0,
\end{equation*}
and the equality holds if and only if $d_1=d_2$ and $\mathrm{wt}(I)=\mathrm{wt}(J)$. Hence we have $\Psi^I\Phi^J=0$ unless $\mathrm{len}(I)=\mathrm{len}(J)$ and $\mathrm{wt}(I)=\mathrm{wt}(J)$.

Now we assume that $\mathrm{len}(I)=\mathrm{len}(J)$ and $\mathrm{wt}(I)=\mathrm{wt}(J)$. Still by \cref{eq:dimcalc}, we notice that the image of the diagonal map \begin{equation*}
  \Delta_{d_1}:\mathrm{Gr}_{X}(\mc{E}^{\vee}[1],d_1)\to \mathrm{Gr}_{X}(\mc{E}^{\vee}[1],d_1)\times_X \mathrm{Gr}_{X}(\mc{E}^{\vee}[1],d_1)
\end{equation*}
is the only irreducible component which has dimension $\beta_2$. Hence there exists an integer $a_{IJ}$ such that 
\begin{equation*}
    \Psi^I\Phi^J=a_{IJ}\mathrm{Id}.
\end{equation*}
Finally we compute the constant $a_{IJ}$. Let $\lambda=\iota(J)^t$ and $\mu=\iota(I)^t$, where $\lambda,\mu\in B_{r-d+d_1,d-d_1}$.  
We can assume that $X^{\geq d_1}(\mc{E}^{\vee}[1])\neq \emptyset$. Otherwise, we have $\mathrm{Gr}_X(\mc{E}^{\vee}[1],d_1)=\emptyset$ and $a_{IJ}$ can be any integer. By \cref{cor:nonempty}, we can choose a closed point $x\in X^{=d_1}(\mc{E}^{\vee}[1])=X^{=(d_1+r)}(\mc{E})$. To compute $a_{IJ}$, we can replace $X$ by any open neighborhood $U\subset X$, as $\mc{E}|_{U}$ is also $G$-smooth over $U$. It is well-known (we refer to Proposition 22.53 of \cite{gortz2020algebraic}) that there is a neighborhood $U$ of $x$ such that $\mc{E}|_U$ is quasi-isomorphic to a two-term complex 
  \begin{equation*}
    \mc{O}_U^{d_1}\xrightarrow{\beta} \mc{O}_U^{d_1+r},
  \end{equation*}
where $\beta$ is a $d_1\times (d_1+r)$ matrix over $\mc{O}(U)$ such that $\beta|_x=0$. Thus we have a morphism from $U$ to the affine space $\mathrm{Hom}(\mb{C}^{d_1},\mb{C}^{d_1+r})$ such that $\mc{E}|_{U}$ is the pull-back of the universal complex 
  \begin{equation*}
    \mc{E}_{r,d_1}:\mc{O}^{d_1}\to \mc{O}^{r+d_1}
  \end{equation*}
 from $\mathrm{Hom}(\mb{C}^{d_1},\mb{C}^{d_1+r})$ to $U$. Moreover, by dimension counting in \cref{lem:excess}, there are no excess bundles for the Cartesian diagrams:
\begin{equation*}
    \begin{tikzcd}
\mathrm{Inc}_U(\mc{E}|_{U},d_-,d_+) \ar{r}\ar{d}& \mathrm{Inc}_{\mathrm{Hom}(\mb{C}^{d_1},\mb{C}^{d_1+r})}(\mc{E}_{r,d_1},d_-,d_+)\ar{d} \\
U\ar{r} & \mathrm{Hom}(\mb{C}^{d_1},\mb{C}^{d_1+r})
    \end{tikzcd}
\end{equation*}
for all $d_-,d_+\geq 0$. Thus, to compute the multiplicity $a_{IJ}$, we can replace $U$ by the affine space  $\mb{V}_{r,d_1}:=\mathrm{Hom}(\mb{C}^{d_1},\mb{C}^{r+d_1})$. In this case, we have
 \begin{gather*}
  \mathrm{Gr}_{\mb{V}_{r,d_1}}(\mc{E}^{\vee}[1],d_1)=\{0\}, \quad  \mathrm{Gr}_{\mb{V}_{r,d_1}}(\mc{E},d)\cong \mathrm{Tot}_{\mathrm{Gr}(d_1+r,d)}(\mc{S}^{\oplus d_1}),\\
   \mathrm{Inc}_{\mb{V}_{r,d_1}}(\mc{E},d_1,d)\cong \mathrm{Gr}(d_1+r,d),
\end{gather*}
where $\mc{S}$ is the universal subbundle on $\mathrm{Gr}(d_1+r,d)$:
\begin{equation*}
  0\to\mc{S}\to \mc{O}^{r+d_1}\to \mc{U}\to 0.
\end{equation*}
The projection map from $\mathrm{Inc}_{\mb{V}_{r,d_1}}(\mc{E},d_1,d)$ to $\mathrm{Gr}_{\mb{V}_{r,d_1}}(\mc{E},d)$ is the inclusion of the zero section of the total space.  By the excess intersection formula and the Schubert calculus, we have
\begin{align*}
    a_{IJ}&=\int_{\mathrm{Gr}(d_1+r,d)}(\Delta_{\lambda}(\mc{U})\Delta_{\mu^c}(\mc{U})\cdot c_{top}(\mc{O}^{d_1}\otimes \mc{S}^{\vee}))\\
    &= \int_{\mathrm{Gr}(d_1+r,d)}(\Delta_{\lambda^t}(\mc{S}^{\vee})\Delta_{\mu^{ct}}(\mc{S}^{\vee})\cdot \Delta_{(r+d_1-d)^{d_1}}(\mc{S}^{\vee})), \\
    &=\int_{\mathrm{Gr}(r,d-d_1)}(\Delta_{\lambda^t}(\mc{S}^{\vee})\Delta_{\mu^{ct}}(\mc{S}^{\vee}))=\delta_{\lambda^t,\mu^t}=\delta_{IJ}.
\end{align*}
\end{proof}
\begin{theorem}[Manin's identity principle \cite{manin1968correspondences}]\label{thm:manin}
  Let $X$ and $Y$ be two smooth projective varieties, and $f\in \mathrm{CH}^*(X\times Y)$ be a correspondence. Then $f=0$ if and only if for any smooth variety $T$, $(f\times id)_*=0$ for the correspondence
  \begin{equation*}
    (f\times \operatorname{id})_*:\mathrm{CH}^*(X\times T)\to \mathrm{CH}^*(Y\times T).
  \end{equation*}
\end{theorem}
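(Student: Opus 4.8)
The plan is to prove the two implications separately; all the content sits in the ``if'' direction, while the ``only if'' part is a triviality. Indeed, if $f=0$ in $\mathrm{CH}^*(X\times Y)$, then for every smooth variety $T$ the class $f\boxtimes[\Delta_T]$ defining the correspondence $f\times\mathrm{id}$ from $X\times T$ to $Y\times T$ vanishes, and hence so does the induced map $(f\times\mathrm{id})_*$, which by definition is cap product with that class followed by the (proper, since $X$ is projective) pushforward to $Y\times T$.

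For the converse --- this is the ``identity principle'' proper --- I would test the whole family $T\mapsto(f\times\mathrm{id})_*$ against a single universal object. Take $T=X$ and evaluate $(f\times\mathrm{id}_X)_*$ on the diagonal class $[\Delta_X]\in\mathrm{CH}^*(X\times X)=\mathrm{CH}^*(X\times T)$. Writing $\sigma\colon X\times Y\xrightarrow{\sim}Y\times X$ for the transposition of factors, the crux is the identity
\[
(f\times\mathrm{id}_X)_*\big([\Delta_X]\big)=\sigma^*f\ \in\ \mathrm{CH}^*(Y\times X).
\]
Granting it, the hypothesis applied to this single $T=X$ forces $\sigma^*f=0$; since $\sigma$ is an isomorphism of varieties, $f=0$, as desired.

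It remains to verify the displayed identity, which is the only step requiring any care --- and even this is routine, with no genuine obstacle. Label the four factors of $X\times X\times Y\times X$ by $1,2,3,4$, so that factors $1,2$ form the source $X\times T$ (with $T=X$), factors $3,4$ form the target $Y\times T$, the class $f$ lives on factors $1,3$, the diagonal $[\Delta_X]$ (as an element of $\mathrm{CH}^*(X\times T)$) lives on factors $1,2$, and the diagonal implementing $\mathrm{id}_T$ lives on factors $2,4$. By the definition of the induced correspondence,
\[
(f\times\mathrm{id}_X)_*\big([\Delta_X]\big)=\mathrm{pr}_{34,*}\big(\mathrm{pr}_{12}^*[\Delta_X]\cdot\mathrm{pr}_{13}^*f\cdot\mathrm{pr}_{24}^*[\Delta_X]\big).
\]
The two diagonal factors cut out the locus $x_1=x_2=x_4$, so the product is supported on a copy of $X\times Y$ in the coordinates $(x_1,y_3)$ lying over $f$; pushing it forward to the $(y_3,x_4)$-plane, in which $x_4=x_1$, returns exactly the transpose $\sigma^*f$. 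Since $X$ is smooth, $\Delta_X\hookrightarrow X\times X$ is a regular embedding and these intersections are clean, so every multiplicity equals $1$ and no excess-intersection correction intervenes; this is the entirety of the bookkeeping. The only tools are the projection formula and the usual base-change compatibilities of flat pullback and proper pushforward on Chow groups, and precisely the same argument establishes the identity principle for each of the (co)homology theories $\mb{H}^*$ considered in this paper.
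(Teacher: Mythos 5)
Your argument is correct: evaluating the family of induced maps at the single test object $T=X$ on the diagonal class $[\Delta_X]$ recovers the transpose of $f$ (your bookkeeping of the transverse intersection $\{x_1=x_2=x_4\}$ and the projection formula is accurate), so the hypothesis forces $f=0$. The paper states this result as a citation to Manin and gives no proof of its own, and your argument is precisely the standard Yoneda-style proof of the identity principle, so there is nothing further to compare.
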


\section{Derived Categories of Derived Grassmannians}
\label{sec:derived}
In this section, we prove the Grothendieck group and Hochschild homology versions of \cref{eq:once}. The method is different from \cref{sec:chow}. The semiorthogonal decomposition of $\mathrm{D}^b_{\mathrm{coh}}(\mathrm{Gr}_{X}(\mc{E}))$ established by the first-named author in \cite{jiang2023derived} induces several \textbf{categorical equations}. By considering their decategorification, we uncover the underlying representation-theoretic structures through the lens of Clifford algebra representations.

In this section, we apply the derived algebraic geometry framework. All the categories in this section are $\infty$-categories and all the schemes in this section are derived schemes.

\subsection{Semiorthogonal decompositions and categorical equations}
We begin by describing how semiorthogonal decompositions can be expressed in terms of categorical equations.

A \textbf{semiorthogonal decomposition} of a stable $\infty$-category $\mathcal{D}$, denoted by
\[
\mathcal{D} = \langle \mathcal{D}_1, \mathcal{D}_2, \ldots, \mathcal{D}_n \rangle,
\]
is a sequence of stable $\infty$-subcategories $\mathcal{D}_1, \ldots, \mathcal{D}_n$ satisfying:
\begin{enumerate}
    \item $\operatorname{Hom}(D_j, D_i) \cong 0$ for all $D_i \in \mathcal{D}_i$, $D_j \in \mathcal{D}_j$, whenever $j > i$.
    \item For any $D \in \mathcal{D}$, there is a sequence
    \[
    0 = D_n \to D_{n-1} \to \cdots \to D_1 \to D_0 = D
    \]
    in $\mathcal{D}$ such that $\mathrm{cone}(D_j \to D_{j-1}) \in \mathcal{D}_j$ for each $1 \leq j \leq n$.
\end{enumerate}
The sequence $\mathcal{D}_1, \ldots, \mathcal{D}_n$ is called \textbf{semiorthogonal} if only condition (1) is required.

Now, let $\mathcal{A}_1, \ldots, \mathcal{A}_n$ be stable $\infty$-categories and consider functors
\[
\Phi_j \colon \mathcal{A}_j \to \mathcal{D}.
\]
Suppose that each $\Phi_j$ admits a left adjoint $\Phi_j^L$. The adjunction yields natural unit and counit maps:
\[
\operatorname{id}_{\mathcal{D}} \to \Phi_j \Phi_j^L, \qquad \Phi_j^L \Phi_j \to \operatorname{id}_{\mathcal{A}_j}.
\]
Set $\mathfrak{R}_j := \mathrm{cone}\big(\operatorname{id}_{\mathcal{D}} \to \Phi_j \Phi_j^L\big)[-1]$, the homotopy fiber of the unit map.

We now reformulate the defining conditions for a semiorthogonal decomposition in terms of the following \textbf{categorical equations}:
\begin{align}
   \label{cat1} \mathrm{cone}(\Phi_j^L\Phi_j \to \operatorname{id}_{\mathcal{A}_j}) \cong 0 \\
   \label{cat2} \Phi_i^L \circ \Phi_j \cong 0 \text{ for all } j>i\\
   \label{cat3} \mathfrak{R}_n \circ \cdots \circ \mathfrak{R}_1 \cong 0
\end{align}
\begin{proposition}
\label{lem:515}
    For each $j$, the functor $\Phi_j$ is fully faithful if and only if \cref{cat1} holds. If all $\Phi_j$ are fully faithful, the sequence of essential images 
      \[
    \Phi_1(\mathcal{A}_1),\ \Phi_2(\mathcal{A}_2),\ \ldots,\ \Phi_n(\mathcal{A}_n)
    \]
    is semiorthogonal if and only if \cref{cat2} holds. Moreover, if the above conditions are satisfied, the sequence of essential images of $\Phi_j$ forms a semiorthogonal decomposition of $\mathcal{D}$ if and only if \cref{cat3} holds.
\end{proposition}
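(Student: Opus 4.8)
The plan is to prove each of the three equivalences in turn, translating the standard homological characterizations of fully faithfulness and semiorthogonality into the language of the unit/counit of the adjunctions $(\Phi_j^L, \Phi_j)$, and then handling the final ``generation'' statement via the filtration produced by iterating the fiber functors $\mf{R}_j$.

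\textbf{Step 1: Fully faithfulness and \cref{cat1}.} First I would recall that for an adjunction $\Phi_j^L \dashv \Phi_j$, the functor $\Phi_j$ is fully faithful if and only if the counit $\Phi_j^L \Phi_j \to \operatorname{id}_{\mathcal{A}_j}$ is an equivalence. Indeed, for $A, A' \in \mathcal{A}_j$ the adjunction gives $\operatorname{Hom}_{\mathcal{D}}(\Phi_j A, \Phi_j A') \cong \operatorname{Hom}_{\mathcal{A}_j}(\Phi_j^L \Phi_j A, A')$, and this matches $\operatorname{Hom}_{\mathcal{A}_j}(A, A')$ functorially precisely when the counit is an equivalence; conversely, taking $A' = \Phi_j^L \Phi_j A$ and chasing the identity recovers an inverse to the counit. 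Since in a stable $\infty$-category a map is an equivalence iff its cone vanishes, this is exactly \cref{cat1}. (The triangle identities guarantee the comparison map is the one induced by the counit, so there is no subtlety about which map one inverts.)

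\textbf{Step 2: Semiorthogonality of the images and \cref{cat2}.} Assuming all $\Phi_j$ fully faithful, for $j > i$ and objects $A_i \in \mathcal{A}_i$, $A_j \in \mathcal{A}_j$, adjunction gives $\operatorname{Hom}_{\mathcal{D}}(\Phi_j A_j, \Phi_i A_i) \cong \operatorname{Hom}_{\mathcal{A}_i}(\Phi_i^L \Phi_j A_j, A_i)$. Thus $\operatorname{Hom}_{\mathcal{D}}(\Phi_j A_j, \Phi_i A_i) \cong 0$ for all such $A_i, A_j$ if and only if $\Phi_i^L \Phi_j A_j$ is zero for all $A_j$ — here I use that a spectrum $M$ with $\operatorname{Hom}(M, N) \cong 0$ for all $N$ is itself $\cong 0$ (take $N = M$), applied in the presentable stable category $\mathcal{A}_i$ — i.e. $\Phi_i^L \circ \Phi_j \cong 0$, which is \cref{cat2}. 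Running this over all pairs $j > i$ gives the equivalence with semiorthogonality of the sequence $\Phi_1(\mathcal{A}_1), \ldots, \Phi_n(\mathcal{A}_n)$.

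\textbf{Step 3: Generation and \cref{cat3}.} Now assume \cref{cat1} and \cref{cat2} hold, so we have a semiorthogonal sequence; it forms an SOD iff the smallest stable subcategory containing all $\Phi_j(\mathcal{A}_j)$ is $\mathcal{D}$, equivalently iff every $D \in \mathcal{D}$ admits a filtration with subquotients in the $\Phi_j(\mathcal{A}_j)$. The key is that $\mf{R}_j$ is precisely the ``projection away from $\Phi_j(\mathcal{A}_j)$'': by definition there is a fiber sequence $\mf{R}_j(D) \to D \to \Phi_j \Phi_j^L(D)$, with the third term in $\Phi_j(\mathcal{A}_j)$. So for any $D$, iterating gives a tower $D = \mf{R}_0(D) \to \cdots$ where at each stage $\operatorname{cone}(\mf{R}_j \cdots \mf{R}_1 (D) \to \mf{R}_{j-1}\cdots \mf{R}_1(D))[-1] = \mf{R}_j(\mf{R}_{j-1}\cdots\mf{R}_1(D))$ is, up to a shift, an object of $\Phi_j(\mathcal{A}_j)$ — wait, more precisely $\operatorname{cone}(\mf{R}_j\cdots\mf{R}_1(D) \to \mf{R}_{j-1}\cdots\mf{R}_1(D)) = \Phi_j\Phi_j^L(\mf{R}_{j-1}\cdots\mf{R}_1(D)) \in \Phi_j(\mathcal{A}_j)$. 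Hence $\mf{R}_n\circ\cdots\circ\mf{R}_1 \cong 0$ forces $D$ to be built from the $\Phi_j(\mathcal{A}_j)$, giving condition (2) of the SOD definition; conversely, if the images generate, then by semiorthogonality the composite $\mf{R}_n\cdots\mf{R}_1$ kills every object (one checks on the generating subcategories $\Phi_j(\mathcal{A}_j)$: $\mf{R}_j$ kills $\Phi_j(\mathcal{A}_j)$ since $\Phi_j^L\Phi_j \cong \operatorname{id}$, and $\mf{R}_i$ for $i < j$ preserves $\Phi_j(\mathcal{A}_j)$ by semiorthogonality because $\Phi_i^L \Phi_j \cong 0$ implies $\mf{R}_i|_{\Phi_j(\mathcal{A}_j)} \cong \operatorname{id}$), so $\mf{R}_n\cdots\mf{R}_1 \cong 0$.

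\textbf{Main obstacle.} The routine homological-algebra translations in Steps 1 and 2 are standard; the real care is needed in Step 3, in making the ``iterated filtration'' argument precise at the $\infty$-categorical level — namely, showing that $\mf{R}_n\circ\cdots\circ\mf{R}_1 \cong 0$ is genuinely equivalent to the generation condition, rather than merely implied by it. The subtle direction is that generation implies the vanishing of the composite: this requires knowing that each $\mf{R}_i$ acts as the identity on $\Phi_j(\mathcal{A}_j)$ for $i<j$ and annihilates $\Phi_j(\mathcal{A}_j)$ for $i = j$, and then that a natural transformation (here, the composite $\mf{R}_n\cdots\mf{R}_1 \to \operatorname{id}$, or rather the functor $\mf{R}_n\cdots\mf{R}_1$ itself) vanishing on a set of generators of a stable $\infty$-category vanishes everywhere — which holds because each $\mf{R}_i$ is exact and the class of objects on which the composite vanishes is closed under cones, shifts, and retracts. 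I would assemble these observations carefully, invoking the references \cite{SAG,HA} for the relevant $\infty$-categorical formalism of adjunctions, fiber sequences, and generation.
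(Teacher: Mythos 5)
Your proposal is correct and follows essentially the same route as the paper: the first two equivalences via the unit/counit formalism of the adjunctions $\Phi_j^L\dashv\Phi_j$, and the third via the iterated fiber functors $\mf{R}_j\circ\cdots\circ\mf{R}_1$ producing the filtration whose successive cones are $\Phi_j\Phi_j^L$ applied to the previous stage. The only difference is presentational: you spell out the converse direction of Step 3 (checking that the composite kills each $\Phi_j(\mathcal{A}_j)$ and using exactness plus closure under extensions), which the paper leaves implicit by citing \cite[Theorem 3.2]{jiang2023derived}.
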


\begin{proof}
These statements are implicitly proved in \cite[Theorem 3.2]{jiang2023derived}. Specifically, the first two statements follow from adjunction and the definition of semiorthogonality. For the last statement, set $\mathfrak{Fil}_{0} = \operatorname{id} \colon \mathcal{D} \to \mathcal{D}$. For each $1 \leq j \leq n$, define:
\begin{align*}
  \mathfrak{Fil}_{j} := \mathfrak{R}_j \circ \mathfrak{R}_{j-1} \circ \cdots \circ \mathfrak{R}_1 \colon \mathcal{D} \to \mathcal{D},  \quad \mathfrak{pr}_{j} := \Phi_j^L \circ \mathfrak{R}_{j-1} \circ \cdots \circ \mathfrak{R}_1 \colon \mathcal{D} \to \mathcal{A}_j.
\end{align*}
These fit into a canonical diagram in the stable $\infty$-category $\mathrm{Fun}(\mathcal{D}, \mathcal{D})$:
\[
\begin{tikzcd}[back line/.style={dashed}, row sep=1.8em, column sep=0.8em]
 \mathfrak{Fil}_{n} \ar{rr} & & \mathfrak{Fil}_{n-1} \ar{rr} \ar{ld} & & \cdots \ar{ld} \ar{r} & \mathfrak{Fil}_{1} \ar{rr} & & \mathfrak{Fil}_{0} := \operatorname{id} \ar{ld} \\
& \Phi_{n} \circ \mathfrak{pr}_n \ar[dashed]{lu} & & \Phi_{n-1} \circ \mathfrak{pr}_{n-1} \ar[dashed]{lu} & & & \Phi_{1} \circ \mathfrak{pr}_1, \ar[dashed]{lu}
\end{tikzcd}
\]
where each triangle is an exact triangle. Thus, the functors $\Phi_j$ induce a semiorthogonal decomposition of $\mathcal{D}$ if and only if $\mathfrak{Fil}_n \cong 0$.
\end{proof}

\begin{remark}
Dually, suppose that each $\Phi_j$ admits a right adjoint $\Phi_j^R$ (and not necessarily a left adjoint),
define $\mathfrak{L}_j := \mathrm{cone}(\Phi_j \Phi_j^R \to \operatorname{id})$, the cone of the counit. Then, by a similar argument, the lemma holds with \cref{cat1,cat2,cat3} replaced by:
\begin{align}
   \mathrm{cone}(\operatorname{id}_{\mathcal{A}_j} \to \Phi_j^R\Phi_j) \cong 0, \tag{\ref{cat1}$'$} \\
   \Phi_j^R \circ \Phi_i \cong 0 \quad \text{for all } j > i, \tag{\ref{cat2}$'$} \\
   \mathfrak{L}_1 \circ \cdots \circ \mathfrak{L}_n \cong 0. \tag{\ref{cat3}$'$}
\end{align}
\end{remark}

\begin{remark}
  For any integer $n\geq 1$, we consider the unital algebra generated by $m_i,\psi_i,\phi_i$ for $1\leq i\leq n$ and $m_{\infty}$ with relations
  \begin{gather*}
     m_im_{j}=\delta_{ij}m_i, \quad
    \phi_i m_j=\delta_{ij}\phi_i, \quad m_i \psi_j=\delta_{ij}\psi_j, \text{ for all } i,j\in [1,n]\cup \{\infty\} \\
    m_{\infty}\phi_i=\phi_i, \quad \psi_i m_{\infty}=\psi_i, \quad  m_{j}\phi_i=0, \quad \psi_i m_j=0 \text{ for all }1\leq i,j\leq n\\
    \psi_{i}\phi_{j}=\delta_{ij}m_j, \quad \text{ for } 1\leq i\leq j\leq n, \\
\sum_{i=1}^n m_i=(1-\phi_1\psi_1)\cdots (1-\phi_n\psi_n)=1-m_{\infty}.
  \end{gather*}
  Then by \cref{lem:515}, any semi-orthogonal decomposition of a stable $\infty$-category $\mc{D}$ into $n$ components can be formulated as a categorical representation of the above algebra, where $m_i$ acts as the projection to the $i$-th component, and $m_{\infty}$ acts as the projection to $\mc{D}$. 
\end{remark}
\subsection{Derived Grassmannians of perfect complexes}
Next, we focus on derived Grassmannians of perfect complexes and their incidence correspondences \cite{jiang2022grassmanian, jiang2023derived}.

Let $X$ be a scheme (or, more generally, a derived stack), and let $\mathcal{E}$ be a perfect complex on $X$ of Tor amplitude in $[0,1]$ and rank $r \ge 0$. For an integer $d_+ \geq 0$, we consider the (rank $d_+$) \textbf{derived Grassmannian}
\begin{equation*}
 \mathrm{Gr}_X(\mathcal{E},d_+) \to X
\end{equation*}
of the complex $\mathcal{E}$ over $X$ as defined in \cite{jiang2022grassmanian}. It assigns to any morphism $T \to X$ (with $T$ a derived scheme) the space of quotients $\rho_+ \colon \mathcal{E}_T \twoheadrightarrow \mathcal{Q}_+$, where $\mathcal{Q}_+$ is a vector bundle on $T$ of rank $d_+$, and $\rho_+$ induces a surjection on zeroth homology. Here, $\mathcal{E}_T$ denotes the derived pullback of $\mathcal{E}$ to $T$. It is a derived enhancement of the classical Grassmannian of quotients of the sheaf $h^0(\mathcal{E})$, i.e. they have the same scheme structure, but different derived structures.

By convention, we set $\mathrm{Gr}_X(\mathcal{E}, 0) = X$ and $\mathrm{Gr}_X(\mathcal{E}, d_+) = \emptyset$ for $d_+ < 0$. The projection $\mathrm{Gr}_X(\mathcal{E},d_+) \to X$ is representable, proper, and quasi-smooth, with virtual relative dimension $d_+(r - d_+)$ (see \cite[Corollary 4.47]{jiang2022grassmanian}).

The shifted derived dual $\mathcal{E}^{\vee}[1]$ is also a perfect complex of Tor amplitude $[0,1]$, now with rank $-r$. For any $d_- \in \mathbb{Z}$, we similarly have the derived Grassmannian
\begin{equation*}
    \mathrm{Gr}_X(\mathcal{E}^{\vee}[1],d_-) \to X,
\end{equation*}
which is proper and quasi-smooth, with virtual relative dimension $-d_-(r + d_-)$.

Given $d_+, d_- \in \mathbb{Z}$, the \textbf{derived incidence scheme} defined in \cite[\S2.2]{jiang2022grassmanian},
\[
\mathrm{Inc}_X(\mathcal{E},d_+,d_-) \to X,
\]
has the same underlying classical scheme as the derived fiber product
\begin{equation*}
    \mathrm{Gr}_X(\mathcal{E}^{\vee}[1],d_-) \times_X \mathrm{Gr}_X(\mathcal{E},d_+),
\end{equation*}
but with a \textbf{different} derived structure. Explicitly, for any $T \to X$ (with $T$ a derived scheme), the space of $T$-points of $\mathrm{Inc}_X(\mathcal{E},d_+,d_-)$ is given by triples
\[
\Big(\rho_+ \colon  \mathcal{E}_T \twoheadrightarrow \mathcal{Q}_+,\, \rho_- \colon \mathcal{E}_T^\vee[1] \twoheadrightarrow \mathcal{Q}_-,\, \gamma\Big),
\]
where $\rho_+$ and $\rho_-$ are $T$-points of $\mathrm{Gr}_X(\mathcal{E},d_+)$ and $\mathrm{Gr}_X(\mathcal{E}^\vee[1], d_-)$, respectively, and $\gamma$ is a homotopy between the composition
\[
\mathcal{Q}_-^\vee[1] \xrightarrow{\rho_-^\vee[1]} \mathcal{E}_T \xrightarrow{\rho_+} \mathcal{Q}_+
\]
and zero. The ``convolution" of the composition yields a canonical perfect complex
\[
\mathrm{cone}\Big( \mathrm{cone} \big( \mathcal{Q}_-^\vee[1] \xrightarrow{\rho_-^\vee[1]} \mathcal{E}_T\big)  \xrightarrow{\rho_+}  \mathcal{Q}_+ \Big)[-1] \in \mathrm{D}_{\mathrm{perf}}(T).
\]
This defines a \textbf{universal perfect complex}
\[
\mathcal{E}^{\mathrm{univ}}_{d_+, d_-} \in  \mathrm{D}_{\mathrm{perf}}(\mathrm{Inc}_X(\mathcal{E},d_+,d_-))
\]
on $\mathrm{Inc}_X(\mathcal{E},d_+,d_-)$ of Tor amplitude in $[0,1]$ and rank $r-d_+ + d_-$.

By construction, we have a correspondence diagram
\[
\begin{tikzcd}
& \mathrm{Inc}_X(\mathcal{E}, d_+, d_- ) \ar{dl}[swap]{r_+} \ar{dr}{r_-} & \\
\mathrm{Gr}_X(\mathcal{E}, d_+) & & \mathrm{Gr}_X(\mathcal{E}^\vee[1], d_-)
\end{tikzcd}
\]
where the projections $r_+$, $r_-$, and $\mathrm{Inc}_X(\mathcal{E},d_+,d_-) \to X$ are proper and quasi-smooth, with virtual relative dimensions
\begin{align*}
    d_-(d_+ - r - d_-), \quad d_+(r + d_- - d_+), \quad \text{and} \quad d_+(r - d_+) - d_-(r + d_-) + d_-d_+,
\end{align*}
respectively (\cite[Lemma 2.10]{jiang2023derived}).

\subsection{Semiorthogonal decompositions and Clifford algebra representations}
We now relate the semiorthogonal decompositions of derived Grassmannians to Clifford algebra representations, completing the construction of the Grothendieck group and Hochschild homology version of \cref{eq:once}.

Throughout, for any scheme or derived stack $Y$, we let $\mathrm{D}(Y)$ denote one of the following derived categories:
\begin{equation*}
  \mathrm{D}_{\mathrm{qc}}(Y), \qquad \mathrm{D}^{\mathrm{b}}_{\mathrm{coh}}(Y), \qquad \mathrm{D}_{\mathrm{perf}}(Y),
\end{equation*}
namely, the derived category of quasi-coherent sheaves, bounded coherent complexes, or perfect complexes, respectively.

Continuing with the previous subsection, we consider the unions
\[
\mathrm{Gr}_X(\mathcal{E}, \bullet) = \coprod_{d_+ \in \mathbb{Z}} \mathrm{Gr}_X(\mathcal{E}, d_+), \qquad
\mathrm{Gr}_X(\mathcal{E}^\vee[1], \bullet) = \coprod_{d_- \in \mathbb{Z}} \mathrm{Gr}_X(\mathcal{E}^\vee[1], d_-).
\]
and their corresponding derived categories:
\[
\mathrm{D}(\mathrm{Gr}_X(\mathcal{E}, \bullet)) = \bigoplus_{d_+ \in \mathbb{Z}} \mathrm{D}(\mathrm{Gr}_X(\mathcal{E}, d_+)), 
\quad 
\mathrm{D}(\mathrm{Gr}_X(\mathcal{E}^\vee[1], \bullet)) = \bigoplus_{d_- \in \mathbb{Z}} \mathrm{D}(\mathrm{Gr}_X(\mathcal{E}^\vee[1], d_-)).
\]
For each $i \in \mathbb{Z}$, we consider the incidence correspondence diagram
\[
\begin{tikzcd}
& \mathrm{Inc}_X(\mathcal{E}, i) := \coprod_{d \in \mathbb{Z}} \mathrm{Inc}_X(\mathcal{E}, d+ i, d) \ar{dl}[swap]{r_+} \ar{dr}{r_-} & \\
\mathrm{Gr}_X(\mathcal{E}, \bullet + i) & & \mathrm{Gr}_X(\mathcal{E}^\vee[1], \bullet)
\end{tikzcd}
\]
and the universal perfect complex
\[
\mathcal{E}^{\mathrm{univ}}_i = \big(\mathcal{E}^{\mathrm{univ}}_{d + i, d}\big)_{d \in \mathbb{Z}} \in \mathrm{D}_{\mathrm{perf}}(\mathrm{Inc}_X(\mathcal{E}, i))
\]
which has Tor amplitude in $[0,1]$ and rank $r-i$ over $\mathrm{Inc}_X(\mathcal{E}, i)$.

For any subset $I \subset [r]$,  write $I = (i_0 < i_1 < \cdots < i_{j-1})$ with $\mathrm{len}(I)=j$. We recall the associated partition
\[
\iota(I) := (i_{j-1} - (j-1) \geq  \cdots \geq i_1 - 1 \geq i_0) \in B_{j, r-j}
\]
defined in \cref{eq:iotaa}. Consider the Fourier--Mukai functor
\[
\Phi^I \colon \mathrm{D}(\mathrm{Gr}_X(\mathcal{E}^\vee[1], \bullet) \to \mathrm{D}(\mathrm{Gr}_X(\mathcal{E}, \bullet))
\]
which sends an object $\mathcal{F} \in \mathrm{D}(\mathrm{Gr}_X(\mathcal{E}^\vee[1], d -r +j))$ to 
\[
\Phi^I(\mathcal{F}) =
r_{+*}\left(r_-^* (\mathcal{F}) \otimes \mathbb{S}^{\iota(I)}\big(\mathcal{E}_{r-j}^{\mathrm{univ}}\big)\right) \otimes \det(\mathcal{U}_+)^{r-j} \in  \mathrm{D}(\mathrm{Gr}_X(\mathcal{E}, d)),
\]
where $\mathbb{S}^{\iota(I)}(-)$ denotes the derived Schur functor (\cite{jiang2022grassmanian}), $\mathcal{E}_{r-j}^{\mathrm{univ}}$ is the universal perfect complex on $\mathrm{Inc}_X(\mathcal{E},r-j)$, and $\mathcal{U}_+$ is the universal quotient bundle on $\mathrm{Gr}_X(\mathcal{E},d)$. 

By \cite[Lemma 2.11]{jiang2023derived}, the functor $\Phi^I$ admits a left adjoint
\[
\Psi^I \colon  \mathrm{D}(\mathrm{Gr}_X(\mathcal{E}, \bullet)) \to \mathrm{D}(\mathrm{Gr}_X(\mathcal{E}^\vee[1], \bullet)),
\]
which is explicitly given by a Fourier--Mukai kernel supported on $\mathrm{Inc}_X(\mathcal{E}, r-j)$.

There is a natural lexicographic order on subsets of $[r]$: for $I = \{i_0 < \cdots < i_{j-1}\}$ and $J = \{k_0 < \cdots < k_{\ell-1}\}$, we write $I <_{\mathrm{lex}} J$ if the sequence $(i_{j-1}, \ldots, i_0, -1, -1, \ldots)$ precedes $(k_{\ell-1}, \ldots, k_0, -1, -1, \ldots)$ in the usual lexicographic order. Under the bijection $\iota: \{I|I\subset [r]\} \to \mathrm{P}_r$ from \cref{eq:iotaa}, this lexicographic order $<_{\mathrm{lex}}$ is precisely the reverse of the total order $<$ on $\mathrm{P}_r$ defined in \cite[Notation 3.1]{jiang2023derived}, i.e., $I <_{\mathrm{lex}} J$ if and only if $\iota(I) > \iota(J)$ in $\mathrm{P}_r$. Therefore, we have:

\begin{theorem}[{Cf. \cite[Theorem 3.2]{jiang2023derived}}]
\label{thm:SOD}
With notation as above:
\begin{enumerate}
     \item For any $I \subset [r]$, we have $\mathrm{cone}(\Psi^I \Phi^I \to \operatorname{id}) \cong 0$.
    \item For any $I, J \subset [r]$, we have $\Psi^I \circ \Phi^J \cong 0$ whenever $J <_{\mathrm{lex}} I$.
    \item Setting $\mathfrak{R}_I := \mathrm{cone}(\operatorname{id} \to \Phi^I \Psi^I)[-1]$, we have $\prod_{I \subset [r]}^{<_{\mathrm{lex}}} \mathfrak{R}_I \cong 0$.
\end{enumerate}
In particular, all $\Phi^I$ are fully faithful, and induce a semiorthogonal decomposition 
\[
\mathrm{D}(\mathrm{Gr}_X(\mathcal{E}, \bullet)) =  \left \langle \mathrm{Im} (\Phi^I) \mid I \subset [r] \right \rangle
\]
which is ordered by $>_{\mathrm{lex}}$, i.e., $\operatorname{Hom}(\mathrm{Im}(\Phi^J),\, \mathrm{Im}(\Phi^I)) \cong 0$ whenever $J <_{\mathrm{lex}} I$.
\end{theorem}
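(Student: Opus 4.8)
The plan is to obtain \cref{thm:SOD} as a reformulation of \cite[Theorem 3.2]{jiang2023derived} in the present combinatorial language, and then feed the resulting three categorical equations into \cref{lem:515}. First I would recall what \cite[Theorem 3.2]{jiang2023derived} delivers: for a Tor-amplitude $[0,1]$ perfect complex $\mathcal{E}$ of rank $r\geq 0$ on $X$ and each fixed $d_+\geq 0$, the Fourier--Mukai functors built from the incidence correspondence $r_\pm$ together with the derived Schur functors $\mathbb{S}^{\lambda}(\mathcal{E}^{\mathrm{univ}}_{d_+,d_-})$ and a determinant twist --- indexed by a partition $\lambda$ and the defect $d_+-d_-$ --- satisfy \cref{cat1,cat2,cat3}, hence assemble into a semiorthogonal decomposition of $\mathrm{D}(\mathrm{Gr}_X(\mathcal{E},d_+))$ ordered by the total order $<$ on $\mathrm{P}_r$ of \cite[Notation 3.1]{jiang2023derived}. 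Forming the direct sum of these statements over all $d_+=d\in\mathbb{Z}$, and bundling the incidence schemes as $\mathrm{Inc}_X(\mathcal{E},i)=\coprod_d\mathrm{Inc}_X(\mathcal{E},d+i,d)$ accordingly, promotes everything to the graded categories $\mathrm{D}(\mathrm{Gr}_X(\mathcal{E},\bullet))$ and $\mathrm{D}(\mathrm{Gr}_X(\mathcal{E}^\vee[1],\bullet))$.

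Next I would spell out the dictionary. The bijection $\iota$ of \cref{eq:iotaa} sends a subset $I\subset[r]$ of length $j$ to the partition $\iota(I)\in B_{j,r-j}$; since $\mathcal{E}^{\mathrm{univ}}_{r-j}$ has rank $j$, the Schur functor $\mathbb{S}^{\iota(I)}$ is the correct one to apply, and $\Phi^I$ --- with its twist $\det(\mathcal{U}_+)^{r-j}$ and source/target grading $\mathrm{D}(\mathrm{Gr}_X(\mathcal{E}^\vee[1],d-r+j))\to\mathrm{D}(\mathrm{Gr}_X(\mathcal{E},d))$ --- is exactly the Fourier--Mukai functor of \cite{jiang2023derived} attached to the pair $(r-j,\iota(I))$, whose left adjoint $\Psi^I$ exists by \cite[Lemma 2.11]{jiang2023derived}. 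As recorded just above the statement, $<_{\mathrm{lex}}$ on subsets is the reverse of $<$ on $\mathrm{P}_r$, so ``$J<_{\mathrm{lex}}I$'' translates to ``$\iota(J)>\iota(I)$'', the inequality governing \cref{cat2,cat3} in \cite{jiang2023derived}. Under this translation, statements (1), (2), (3) of \cref{thm:SOD} become precisely \cref{cat1}, \cref{cat2}, \cref{cat3} for the family $\{\Phi^I,\Psi^I\}_{I\subset[r]}$; in (3) one checks that the ordered product $\prod_{I\subset[r]}^{<_{\mathrm{lex}}}\mathfrak{R}_I$ equals the iterated composite $\mathfrak{R}_{I_n}\circ\cdots\circ\mathfrak{R}_{I_1}$ with $I_1>_{\mathrm{lex}}\cdots>_{\mathrm{lex}}I_n$, which is the composite appearing in \cref{cat3}.

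The ``in particular'' clause is then immediate from \cref{lem:515}, applied with $\mathcal{D}=\mathrm{D}(\mathrm{Gr}_X(\mathcal{E},\bullet))$, each $\mathcal{A}_I=\mathrm{D}(\mathrm{Gr}_X(\mathcal{E}^\vee[1],\bullet))$, the $2^r$ functors $\Phi^I$ listed in decreasing lexicographic order, and $\Phi_I^L=\Psi^I$: equation (1) yields full faithfulness of each $\Phi^I$; equation (2) yields that the essential images $\mathrm{Im}(\Phi^I)$, ordered by $>_{\mathrm{lex}}$, form a semiorthogonal sequence; and equation (3) promotes this to a semiorthogonal decomposition of $\mathrm{D}(\mathrm{Gr}_X(\mathcal{E},\bullet))$.

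The main difficulty I anticipate is entirely bookkeeping rather than conceptual: one must verify that the precise normalizations agree --- which box $B_{j,r-j}$ the partition $\iota(I)$ lives in, whether \cite{jiang2023derived} indexes by $\iota(I)$ or by a transpose/complement, and the exact power of $\det(\mathcal{U}_+)$ --- and, above all, that the reversal between $<$ and $<_{\mathrm{lex}}$ has been applied consistently across all three equations rather than only some of them. A single unreversed inequality, or an off-by-one in the determinant twist, would break the decomposition, so I would write out the translation of each of \cref{cat1,cat2,cat3} explicitly instead of citing \cite[Theorem 3.2]{jiang2023derived} wholesale.
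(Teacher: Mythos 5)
Your proposal matches the paper's own treatment: \cref{thm:SOD} is obtained there exactly by translating \cite[Theorem 3.2]{jiang2023derived} through the bijection $\iota$ of \cref{eq:iotaa} (with $\Phi^I$ the Fourier--Mukai functor attached to the pair $(r-\mathrm{len}(I),\iota(I))$, its left adjoint from \cite[Lemma 2.11]{jiang2023derived}, and $<_{\mathrm{lex}}$ identified as the reverse of the order of \cite[Notation 3.1]{jiang2023derived}), after which the ``in particular'' clause is \cref{lem:515}. The bookkeeping you flag (the box $B_{j,r-j}$, the twist $\det(\mathcal{U}_+)^{r-j}$, the order reversal) is precisely what the paper records in the paragraphs preceding the theorem, so your argument is correct and essentially identical to the paper's.
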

Consequently, for each $d \in \mathbb{Z}$, the fully faithful functors $\Phi^I$ induce a semiorthogonal decomposition of the degree-$d$ component:
\[
\mathrm{D}(\mathrm{Gr}_X(\mathcal{E}, d)) =
\left\langle \Phi^I\left(\mathrm{D}\big(\mathrm{Gr}_X(\mathcal{E}^\vee[1],\, d - r + \mathrm{len}(I))\big)\right) \;\middle|\; I \subset [r] \right\rangle.
\]

\begin{theorem}
  \label{thm:1} 
  We have an isomorphism of graded $\mathrm{Cl}(r)$-representations given by $\mathrm{G}_0$-theoretic correspondences
  \begin{equation*}
    \bigoplus_{m\in \mathbb{Z}}\mathrm{G}_0(\mathrm{Gr}_X(\mc{E},m))\cong \bigoplus_{m\in \mb{Z}}\mathrm{G}_0(\mathrm{Gr}_X(\mc{E}^{\vee}[1],m))\otimes \mathrm{F}(r)
  \end{equation*}
 where $\mathrm{F}(r)$ is equipped with the dual length grading. The same statement applies to $\mathrm{K}_0$. 
\end{theorem}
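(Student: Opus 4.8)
The plan is to decategorify the semiorthogonal decomposition of \cref{thm:SOD} and feed the resulting relations into the orthogonalization formalism of \cref{prop:4}. I work first with $\mathrm{D} = \mathrm{D}^{\mathrm{b}}_{\mathrm{coh}}$, so that the relevant $K$-group is $\mathrm{G}_0$; the $\mathrm{K}_0$ statement is then obtained verbatim by taking $\mathrm{D} = \mathrm{D}_{\mathrm{perf}}$ instead, since \cref{thm:SOD} holds for both. Since the projections $r_\pm$ and $\mathrm{Inc}_X(\mc{E},i)\to X$ are proper and quasi-smooth and the Fourier--Mukai kernels are perfect, the functors $\Phi^I$ and their left adjoints $\Psi^I$ are exact and preserve $\mathrm{D}^{\mathrm{b}}_{\mathrm{coh}}$ (resp. $\mathrm{D}_{\mathrm{perf}}$); hence they descend to $\mb{Z}$-linear maps $\mathfrak{e}_I := [\Phi^I]\colon W\to V$ and $\mathfrak{f}_I := [\Psi^I]\colon V\to W$, where $V := \bigoplus_{m}\mathrm{G}_0(\mathrm{Gr}_X(\mc{E},m))$ and $W := \bigoplus_{m}\mathrm{G}_0(\mathrm{Gr}_X(\mc{E}^\vee[1],m))$. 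Here composition of functors becomes composition of maps, and a cofiber sequence $A\to B\to C$ of exact functors induces the relation $[B]=[A]+[C]$ on $\mathrm{G}_0$; in particular a natural transformation of exact functors whose cone is the zero functor induces an equality of the two maps it defines on $\mathrm{G}_0$. No smoothness hypothesis on $X$ is required here, in contrast with the Chow-theoretic version.

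Next I would translate the three conditions of \cref{thm:SOD} into relations among the $\mathfrak{e}_I,\mathfrak{f}_I$. Condition (1) gives $\mathfrak{f}_I\mathfrak{e}_I=\operatorname{id}_W$, and condition (2) gives $\mathfrak{f}_I\mathfrak{e}_J=0$ whenever $J<_{\mathrm{lex}}I$; taking the total order $\mc{P}$ on the subsets of $[r]$ to be the lexicographic order $<_{\mathrm{lex}}$ of \cref{thm:SOD}, these two facts say precisely that $\mathfrak{e}_I,\mathfrak{f}_I$ are semi-orthogonal with respect to $\mc{P}$ in the sense of \cref{sec:444}. For condition (3), the defining cofiber sequence $\mathfrak{R}_I\to\operatorname{id}\to\Phi^I\Psi^I$ yields $[\mathfrak{R}_I]=\operatorname{id}_V-\mathfrak{e}_I\mathfrak{f}_I$ on $\mathrm{G}_0$; since the ordered-product convention of \cref{thm:SOD}(3) coincides with the one defining $\Gamma_{\mc{P}}$, decategorifying $\prod_{I\subset[r]}^{<_{\mathrm{lex}}}\mathfrak{R}_I\cong 0$ gives $\Gamma_{\mc{P}}=\prod_{I\subset[r]}^{\mc{P}}(\operatorname{id}_V-\mathfrak{e}_I\mathfrak{f}_I)=0$. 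Now \cref{prop:4} applies: the orthogonalization $e_I,f_I$ of $\mathfrak{e}_I,\mathfrak{f}_I$ equips $V$ with a $\mathrm{Cl}(r)$-module structure and produces an isomorphism $\hat e\colon W\otimes\mathrm{F}(r)\xrightarrow{\sim}V$ of $\mathrm{Cl}(r)$-representations, all of whose structure maps are $\mathrm{G}_0$-theoretic correspondences.

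Finally I would pin down the grading. The induced semiorthogonal decomposition of the degree-$d$ component of $\mathrm{D}(\mathrm{Gr}_X(\mc{E},\bullet))$ shows that $\Phi^I$, hence $e_I=\mathfrak{e}_I$, raises the grading by rank of the universal quotient by $r-\mathrm{len}(I)$, while $f_I$ lowers it by the same amount. By \cref{lem:grading2}, applied in the dual version as discussed in \cref{rem:dualrep}, this is exactly the condition for $\hat e$ to be an isomorphism of graded $\mathrm{Cl}(r)$-representations with $\mathrm{F}(r)$ carrying the dual length grading, which is the assertion of the theorem; replacing $\mathrm{D}^{\mathrm{b}}_{\mathrm{coh}}$ by $\mathrm{D}_{\mathrm{perf}}$ throughout gives the $\mathrm{K}_0$ version. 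I expect essentially all of the geometric content to sit in \cref{thm:SOD} itself (that is, in \cite{jiang2023derived}); at the level of the present argument the only delicate points are checking that $\Phi^I$ and $\Psi^I$ preserve the bounded-coherent (resp. perfect) categories and are exact, so that $\mathrm{G}_0$ (resp. $\mathrm{K}_0$) is functorial along them, and the bookkeeping matching the lexicographic order and ordered-product conventions of \cref{thm:SOD} against those required by \cref{prop:4}.
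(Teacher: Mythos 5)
Your proposal is correct and follows essentially the same route as the paper: decategorify the semiorthogonal decomposition of \cref{thm:SOD} to get semi-orthogonal maps $\mf{e}_I=[\Phi^I]$, $\mf{f}_I=[\Psi^I]$ with respect to the lexicographic order, apply the orthogonalization machinery of \cref{prop:4} to obtain the $\mathrm{Cl}(r)$-isomorphism, and invoke \cref{lem:grading2} for the dual length grading. Your extra remarks on exactness and preservation of $\mathrm{D}^{\mathrm{b}}_{\mathrm{coh}}$ (resp.\ $\mathrm{D}_{\mathrm{perf}}$) just make explicit what the paper leaves implicit.
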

\begin{proof}
In the setting of \cref{thm:SOD}, consider the morphisms induced on Grothendieck groups (where $\mathrm{D}$ denotes either $\mathrm{D}^\mathrm{b}_{\mathrm{coh}}$ or $\mathrm{D}_{\mathrm{perf}}$):
\begin{align*}
    \mf{e}_I &:= [\Phi^I] \colon K_0(\mathrm{D}(\mathrm{Gr}_X(\mathcal{E}, \bullet))) \to K_0(\mathrm{D}(\mathrm{Gr}_X(\mathcal{E}^\vee[1], \bullet))), \\
    \mf{f}_I &:= [\Psi^I] \colon K_0(\mathrm{D}(\mathrm{Gr}_X(\mathcal{E}^\vee[1], \bullet))) \to K_0(\mathrm{D}(\mathrm{Gr}_X(\mathcal{E}, \bullet))).
\end{align*}
By \cref{thm:SOD}, the morphisms $\mf{e}_I$ and $\mf{f}_I$ for all $I\subset [r]$ are semi-orthogonal with the lexicographic order $\mathcal{P}={\mathrm{lex}}$. Let $e_I$ and $f_I$ be the orthogonalization of  $\{\mf{e}_I\}$ and $\{\mf{f}_I\}$. Then \cref{thm:1} directly follows from (3) of \cref{thm:SOD} and \cref{prop:910}. The compatibility of gradings follows from  \cref{lem:grading2}.
\end{proof}

\begin{remark}
Given a derived scheme, by Proposition 3.3 of \cite{Adeel}, it has the same Grothendieck group of coherent sheaves as its underlying classical scheme. Therefore, \cref{thm:1} also holds for the classical Grassmannians.
\end{remark}

A parallel result holds for Hochschild homology 

 \begin{theorem}
  \label{thm:hos}
  Suppose $\mathcal{E}$ is a $G$-smooth complex of rank $r \ge 0$ over a smooth projective variety $X$. Then there is an isomorphism of bi-graded $\mathrm{Cl}(r)$-representations
  \begin{equation*}
    \bigoplus_{m\in \mathbb{Z}} \mathrm{HH}_*(\mathrm{Gr}_X(\mathcal{E},m))\cong \bigoplus_{m\in \mathbb{Z}} \mathrm{HH}_*(\mathrm{Gr}_X(\mathcal{E}^{\vee}[1],m))\otimes \mathrm{F}(r)
  \end{equation*}
  where $\mathrm{F}(r)$ is equipped with dual length grading and is concentrated in Hochschild degree zero. In particular, the operators $e,f, p_{i}, q_{i}$ preserve the homological grading. 
\end{theorem}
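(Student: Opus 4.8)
The plan is to run the proof of \cref{thm:1} essentially verbatim, replacing the Grothendieck group by Hochschild homology $\mathrm{HH}_*$. The first point is that, since $\mc{E}$ is $G$-smooth over the smooth projective variety $X$, every derived Grassmannian $\mathrm{Gr}_X(\mc{E},m)$ and $\mathrm{Gr}_X(\mc{E}^\vee[1],m)$ is proper and quasi-smooth of the expected dimension, hence a \emph{classical} smooth projective variety, so that $\mathrm{HH}_*$ of its derived category is ordinary Hochschild homology, equipped with the Hodge decomposition $\mathrm{HH}_n(Y)=\bigoplus_{q-p=n}H^p(Y,\Omega^q_Y)$. The Fourier--Mukai functors $\Phi^I$ of \cref{thm:SOD} and their left adjoints $\Psi^I$ then induce morphisms of graded $\mathbb{Z}$-modules $\mf{e}_I:=[\Phi^I]$ from $\bigoplus_m\mathrm{HH}_*(\mathrm{Gr}_X(\mc{E}^\vee[1],m))$ to $\bigoplus_m\mathrm{HH}_*(\mathrm{Gr}_X(\mc{E},m))$ and $\mf{f}_I:=[\Psi^I]$ in the opposite direction, and we let $e_I,f_I$ be their orthogonalization with respect to the lexicographic order $\mc{P}={\mathrm{lex}}$ in the sense of \cref{sec:444}.

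The substantive input, not needed in \cref{thm:1} where it is tautological, is the behaviour of $\mathrm{HH}_*$ under Fourier--Mukai functors between smooth proper varieties: an integral functor $\Phi_{\mc{K}}$ induces a map $\mathrm{HH}_*(\Phi_{\mc{K}})$, composition of kernels goes to composition of maps, and $\mathrm{HH}_*(\Phi_{\mc{K}})$ depends only on the class of $\mc{K}$ in $\mathrm{K}_0$ because it is computed from the Mukai vector $\mathrm{ch}(\mc{K})\sqrt{\mathrm{td}}$, which is additive over exact triangles. Consequently the three categorical identities of \cref{thm:SOD} descend numerically: $\mathrm{cone}(\Psi^I\Phi^I\to\operatorname{id})\cong 0$ gives $\mf{f}_I\mf{e}_I=\operatorname{id}$; $\Psi^I\circ\Phi^J\cong 0$ for $J<_{\mathrm{lex}}I$ gives $\mf{f}_I\mf{e}_J=0$ for $J<_{\mathrm{lex}}I$, so the $\mf{e}_I,\mf{f}_I$ are semi-orthogonal with respect to $\mathrm{lex}$; and, using $[\mathfrak{R}_I]=\operatorname{id}-\mf{e}_I\mf{f}_I$ together with the vanishing $\prod_{I\subset[r]}^{<_{\mathrm{lex}}}\mathfrak{R}_I\cong 0$, one gets $\Gamma_{\mathrm{lex}}=\prod_{I\subset[r]}^{<_{\mathrm{lex}}}(\operatorname{id}-\mf{e}_I\mf{f}_I)=0$. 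By \cref{prop:4}, the orthogonalized operators $e_I,f_I$ equip $\bigoplus_m\mathrm{HH}_*(\mathrm{Gr}_X(\mc{E},m))$ with a $\mathrm{Cl}(r)$-representation structure and $\hat e$ is the asserted isomorphism onto $\bigoplus_m\mathrm{HH}_*(\mathrm{Gr}_X(\mc{E}^\vee[1],m))\otimes\mathrm{F}(r)$.

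It remains to match the two gradings. For the grading by the Grassmannian index $m$, the functor $\Phi^I$ shifts $m$ by $r-\mathrm{len}(I)$, so exactly as in \cref{thm:1} (via \cref{lem:grading2} and the dual representation) the isomorphism is graded with $\mathrm{F}(r)$ carrying the dual length grading. For the Hochschild homological grading, the key observation is that the Mukai vector of any kernel lies in Hochschild degree $0$, being a sum of classes in $H^p(Y,\Omega^p_Y)$; hence every $\mathrm{HH}_*(\Phi^I)$ and $\mathrm{HH}_*(\Psi^I)$, therefore every $\mf{e}_I,\mf{f}_I$, and therefore — being built from these by composition and $\mathbb{Z}$-linear combination — every $e_I,f_I$ and each of $e,f,p_i,q_i$ preserves the Hochschild degree. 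By the same computation as \cref{lem:grading2}, the simultaneous vanishing of all these degree shifts is equivalent to $\hat e$ being a morphism of graded $\mathrm{Cl}(r)$-representations in which $\mathrm{F}(r)$ is concentrated in Hochschild degree zero, which yields the final assertion.

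The main obstacle is thus isolated in the second paragraph: one must know that passing to $\mathrm{HH}_*$ converts the categorical cone relation $\prod_{I}\mathfrak{R}_I\cong 0$ into the numerical identity $\Gamma_{\mathrm{lex}}=0$. This rests on (i) the multiplicativity $\mathrm{HH}_*(\Phi\circ\Psi)=\mathrm{HH}_*(\Phi)\circ\mathrm{HH}_*(\Psi)$ and (ii) the additivity of $\mathrm{HH}_*(\Phi_{\mc{K}})$ in exact triangles of the kernel $\mc{K}$, both of which follow from the Mukai-pairing/trace description of Fourier--Mukai functors on Hochschild homology (equivalently, from the additivity of Hochschild homology as a localizing invariant). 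A secondary, routine point is to confirm that $G$-smoothness forces the derived Grassmannians to be honest smooth projective varieties, so that this Hochschild formalism (HKR, Mukai pairing) applies directly; once these are in place the remainder is formal, parallel to \cref{thm:1}.
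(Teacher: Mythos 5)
Your proposal is correct and follows essentially the same route as the paper: the paper's proof likewise invokes the functoriality of Hochschild homology under Fourier--Mukai transforms from \cite{caldararu2003mukai} and then repeats the argument of \cref{thm:1}, with the last assertion coming from the fact that $\Phi_{\mathrm{HH}}$ preserves the homological degree. The only difference is that you make explicit the additivity of the induced maps in the kernel (needed to descend the cone relations of \cref{thm:SOD} to the numerical identity $\Gamma_{\mathrm{lex}}=0$) and the smoothness of the Grassmannians under $G$-smoothness, points the paper leaves implicit in ``the remaining proof is the same as \cref{thm:1}.''
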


\begin{proof}
For the construction of Hochschild homology, we refer to \cite{caldararu2003mukai}, which associates to each smooth projective complex variety $Y$ a graded vector space $\mathrm{HH}_*(Y)$, and to any Fourier--Mukai transform $\Phi \colon \mathrm{D}^\mathrm{b}_{\mathrm{coh}}(Y_1) \to \mathrm{D}^\mathrm{b}_{\mathrm{coh}}(Y_2)$ an induced map
\[
\Phi_{\mathrm{HH}}  \colon \mathrm{HH}_*(Y_1) \to \mathrm{HH}_*(Y_2),
\]
which preserves the homological degree and is functorial with respect to composition and identities; see \cite[\S 5]{caldararu2003mukai} for details. The remaining proof is the same as \cref{thm:1}, while the fact that the $\mathrm{Cl}(r)$ action preserves the homological grading follows from the fact that $\Phi_{\mathrm{HH}}$ preserves the homological degree.
\end{proof}

\begin{remark}
A similar result holds for cohomology. Specifically, the compatibility of the Mukai vector with Fourier--Mukai transforms (see \cite[Cor.~5.29, Lem.~5.32]{huybrechts2006fm}) implies that the induced maps on cohomology satisfy the same relations as those on $K$-theory. Thus, we obtain an isomorphism of $\mathrm{Cl}(r)$-representations:
\begin{equation*}
  \bigoplus_{m\in \mathbb{Z}} \mathrm{H}^*(\mathrm{Gr}_X(\mathcal{E},m), \mathbb{Q})\cong \bigoplus_{m\in \mathbb{Z}} \mathrm{H}^*(\mathrm{Gr}_X(\mathcal{E}^{\vee}[1],m), \mathbb{Q})\otimes \mathrm{F}(r).
\end{equation*}
However, this isomorphism cannot be lifted to an isomorphism of graded representations for the cohomological grading, as the correspondences are not homogeneous.
 \end{remark}

\section{Perverse stable sheaves on blow-ups, the extended algebra \texorpdfstring{$\mc{E}(r)$}{} and the Morita equivalence}
\label{sec:666}
Now we recall $\mathrm{M}^0(l,n)$ from \cref{sec:gra}. For any $n,l\in \mathbb{Z}$, by \cref{thm:NY1}, we have isomorphisms
\begin{equation*}
    \mathrm{M}^0(l,n)\cong \mathrm{Gr}_{\mc{M}_H(n)}(\mc{U}_o,-l), \quad \mathrm{M}^0(l,n+l)\cong \mathrm{Gr}_{\mc{M}_H(n)}(\mc{U}_o^{\vee}[1],-l).
\end{equation*}
By \cref{thm:1} and \cref{prop:4}, for any $n\in \mathbb{Z}$, we have a $\mathrm{Cl}(r)$ action on $\oplus_{n\in \mathbb{Z}} \mb{H}^*( \mathrm{M}^0(l,n))$ and maps 
\begin{align*}
  e:\mb{H}^*(\mathrm{M}^0(l+r,n+l+r))\to \mb{H}^*(\mathrm{M}^0(l,n)), \\ f:\mb{H}^*(\mathrm{M}^0(l,n))\to \mb{H}^*(\mathrm{M}^0(l+r,n+l+r))
\end{align*}
such that \cref{eq3} holds. Thus $e,f$ induce endomorphisms on $\bigoplus_{l,n\in \mb{Z}} \mb{H}^*(\mathrm{M}^0(l,n))$. Let $\mathcal{E}(r)$ be the extended algebra over $\mathrm{Cl}(r)$ with two additional elements $e$ and $f$ with the relations \eqref{eq3}
\begin{equation*}
     p_{i}e=0, \quad fq_{i}=0, \quad fe=1, \quad ef=p_{[r]}q_{[r]}.
\end{equation*}
Then by the above explanation, the algebra $\mc{E}(r)$ naturally acts on
\begin{equation}
\label{eq:act}
    \mc{E}(r) \curvearrowright	\bigoplus_{l,n\in \mb{Z}}\mb{H}^*(\mathrm{M}^0(l,n)).
\end{equation}

On the other hand, we can also obtain $\mc{E}(r)$ through the Morita theory: let the infinite-dimensional Clifford algebra $\mf{E}(r)$ be the unital algebra generated by elements $P_{a,i}, Q_{a,i}, a\in \mb{Z}, i\in [r]$ and an invertible element $E$ with the relations 
\begin{equation}
  \label{eq:clifford}
  \begin{split}
    P_{a,i}^2=0, \quad Q_{a,i}^2=0, \quad EP_{a,i}=P_{a-1,i}E, \quad EQ_{a,i}=Q_{a-1,i}E \quad  \text{ for all } a,i\\
  \{P_{a,i}, P_{b,j}\}=0, \quad \{Q_{a,i}, Q_{b,j}\}=0, \quad \{P_{a,i}, Q_{b,j}\}=\delta_{ij}\delta_{ab} \quad \text{for all $a,b,i,j$,}
    \end{split}
\end{equation}
Let $F:=E^{-1}$. In this section, we will prove that

 \begin{proposition}
  \label{prop:bimodule}
  Let $\Lambda_{r}$ be the $\mf{E}(r)$-left module with a generator $1_{\Lambda_r}$ with the relation 
\begin{equation*}
P_{a,i}|1_{\Lambda_r} =0, \quad \forall a>0, i\in [r].
\end{equation*}
  Then we have $\mc{E}(r)\cong \mathrm{End}_{\mf{E}(r)}(\Lambda_{r})^{op}$. Moreover, let $\mathrm{Mod}_{\infty}^R$ be the abelian category of right-admissible left $\mf{E}(r)$-modules (which will be defined in \cref{sec:Morita}). Then $\Lambda_{r}$ is a small projective generator of $\mathrm{Mod}_{\infty}^R$.

  As a right $\mc{E}(r)$-module,  $\Lambda_r$ is generated by $F^{l}1_{\Lambda_r}$  with the relation $F^{l}1_{\Lambda_r}-F^{l+1}1_{\Lambda_r}e=0$ for all $l\in \mathbb{Z}$.
\end{proposition}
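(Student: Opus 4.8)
The plan is to establish the ring isomorphism $\mc{E}(r)\cong\operatorname{End}_{\mf{E}(r)}(\Lambda_r)^{op}$ first, then the small‑projective‑generator claim, and finally to deduce the presentation of $\Lambda_r$ as a right $\mc{E}(r)$‑module from the first two. Write $I\subset\mf{E}(r)$ for the left ideal generated by all $P_{a,i}$ with $a>0$, so $\Lambda_r=\mf{E}(r)/I$, and put $\Lambda_r^{+}:=\{m\in\Lambda_r: P_{a,i}m=0\text{ for all }a>0,\ i\in[r]\}$. I would begin by recording a normal form for $\mf{E}(r)$: it is the crossed product $\mathrm{Cl}_\infty\rtimes\mb{Z}$ of the (infinite) Clifford algebra $\mathrm{Cl}_\infty$ on $\{P_{a,i},Q_{a,i}\}_{a\in\mb{Z},i\in[r]}$ with the split hyperbolic form $\{P_{a,i},Q_{b,j}\}=\delta_{ij}\delta_{ab}$, by the shift $x\mapsto ExE^{-1}$; thus every element is uniquely $\sum_k c_k E^k$ with $c_k\in\mathrm{Cl}_\infty$, and passing to $\Lambda_r$ one reads off the $\mb{Z}$‑basis $\{Q_S P_T E^k 1_{\Lambda_r}\}$ with $k\in\mb{Z}$ and $S\subset\mb{Z}\times[r]$, $T\subset\mb{Z}_{\le -k}\times[r]$ finite, so $\Lambda_r$ is $\mb{Z}$‑graded by the $E$‑degree. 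Since $\Lambda_r$ is cyclic, $\operatorname{Hom}_{\mf{E}(r)}(\Lambda_r,M)\cong\{m\in M:P_{a,i}m=0\ \forall a>0\}$ for any left module $M$, so $\operatorname{End}_{\mf{E}(r)}(\Lambda_r)^{op}\cong\Lambda_r^{+}$ as $\mb{Z}$‑modules, the product being $m\cdot m'=\overline{\widetilde m\,\widetilde{m'}}$ for any lifts $\widetilde m,\widetilde{m'}\in\mf{E}(r)$ (well defined because $m'\in\Lambda_r^{+}$). I would then define $\Theta\colon\mc{E}(r)\to\operatorname{End}_{\mf{E}(r)}(\Lambda_r)^{op}=\Lambda_r^{+}$ on generators by
\[
p_i\longmapsto P_{0,i}1_{\Lambda_r},\qquad q_i\longmapsto Q_{0,i}1_{\Lambda_r},\qquad e\longmapsto E1_{\Lambda_r},\qquad f\longmapsto\Big(\textstyle\prod_{i\in[r]}P_{1,i}Q_{1,i}\Big)F\,1_{\Lambda_r}.
\]
Each of these lies in $\Lambda_r^{+}$; this is immediate for the first three, while $F1_{\Lambda_r}$ itself is \emph{not} annihilated by the $P_{1,i}$, the idempotent prefactor $\prod_i P_{1,i}Q_{1,i}$ being exactly what repairs this. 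I expect choosing this prefactor correctly to be the one genuinely non‑obvious point in the construction.

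Using the crossed‑product relations ($EP_{a,i}=P_{a-1,i}E$, etc.) together with the Clifford relations, the defining relations of $\mc{E}(r)$ are then checked directly on the lifts: $p_i^{2}=q_i^{2}=0$ and the anticommutators come from the subalgebra $\mathrm{Cl}_0:=\langle P_{0,i},Q_{0,i}\rangle\cong\mathrm{Cl}(r)$; $p_ie=0$ because $P_{0,i}E1_{\Lambda_r}=EP_{1,i}1_{\Lambda_r}=0$; $fq_i=0$ because $Q_{1,i}^{2}=0$ collapses the $i$‑th factor of the prefactor; $fe=1$ because $(\prod_i P_{1,i}Q_{1,i})FE=\prod_i(1-Q_{1,i}P_{1,i})$ fixes $1_{\Lambda_r}$ (as $P_{1,i}1_{\Lambda_r}=0$); and $ef=p_{[r]}q_{[r]}$ because both sides equal $(\prod_i P_{0,i}Q_{0,i})1_{\Lambda_r}$, using the identity $p_{[r]}q_{[r]}=\prod_i p_iq_i$ inside $\mathrm{Cl}(r)$. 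So $\Theta$ is a ring homomorphism. For bijectivity, $\Theta$ is injective on $\mathrm{Cl}(r)$ because $\mathrm{Cl}_0\to\Lambda_r$, $x\mapsto x1_{\Lambda_r}$, is injective (the level‑$0$ PBW vectors are linearly independent); this, combined with a $\mb{Z}$‑basis count done degree by degree in the $E$‑grading — on both sides the graded pieces are indexed by the same data, namely an $E$‑degree $k$, a pair of finite subsets of $\mb{Z}_{\le 0}\times[r]$, and, when $k<0$, the forced prefactor $\prod_{0<a\le|k|}\prod_i P_{a,i}$ — yields that $\Theta$ is an isomorphism.

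For the second assertion: every element of $\Lambda_r$ is annihilated by $P_{a,i}$ for $a\gg0$, so $\Lambda_r\in\mathrm{Mod}_\infty^{R}$. It is a generator, because if $M\in\mathrm{Mod}_\infty^{R}$, $m\in M$, and $P_{a,i}m=0$ for $a>N$ (admissibility), then $F^{N}m$ is annihilated by all $P_{a,i}$ with $a>0$, so $1_{\Lambda_r}\mapsto F^{N}m$ defines a map $\Lambda_r\to M$ whose image contains $E^{N}F^{N}m=m$; summing over $m$ gives $\bigoplus\Lambda_r\twoheadrightarrow M$. It is small since it is cyclic, so $\operatorname{Hom}_{\mf{E}(r)}(\Lambda_r,-)$ commutes with direct sums. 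For projectivity, given a surjection $M\twoheadrightarrow M''$ in $\mathrm{Mod}_\infty^{R}$ and $m''\in\operatorname{Hom}(\Lambda_r,M'')=\{m'':P_{a,i}m''=0\ \forall a>0\}$, lift $m''$ to $\widetilde m\in M$, pick $N$ with $P_{a,i}\widetilde m=0$ for $a>N$, and form the element
\[
\pi_N\ :=\ \prod_{1\le a\le N,\ i\in[r]}P_{a,i}Q_{a,i}\ \in\ \mf{E}(r),
\]
which is idempotent since each $P_{a,i}Q_{a,i}$ is idempotent and these commute. Then $P_{b,j}\pi_N=0$ for $1\le b\le N$ and $P_{b,j}\pi_N=\pi_N P_{b,j}$ for $b>N$, so $\pi_N\widetilde m$ is annihilated by every $P_{a,i}$ with $a>0$, i.e.\ $\pi_N\widetilde m\in\operatorname{Hom}(\Lambda_r,M)$, while its image equals $\pi_N m''=m''$ since $m''$ already lies in the joint kernel. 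Hence $\operatorname{Hom}_{\mf{E}(r)}(\Lambda_r,-)$ is exact on $\mathrm{Mod}_\infty^{R}$ and $\Lambda_r$ is projective; here I use that $\mathrm{Mod}_\infty^{R}$ is an abelian category in which these surjections behave as expected, which is part of the setup of \cref{sec:Morita}.

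Finally, via $\Theta$ the left module $\Lambda_r$ becomes a right $\mc{E}(r)$‑module in which $e$ acts as $\overline x\mapsto\overline{xE}$, so $F^{l+1}1_{\Lambda_r}\cdot e=\overline{F^{l+1}E}=\overline{F^{l}}=F^{l}1_{\Lambda_r}$, which is the asserted relation. To see there are no further relations and that the $F^{l}1_{\Lambda_r}$ generate, let $C$ be the right $\mc{E}(r)$‑module presented by generators $\lambda_l$ $(l\in\mb{Z})$ and relations $\lambda_l=\lambda_{l+1}e$; since $\lambda_l=\lambda_L e^{L-l}$ for $L\ge l$, every element of $C$ is $\lambda_L\cdot y$ for some $L$ and $y\in\mc{E}(r)$, and $\lambda_L\cdot y\mapsto F^{L}\cdot\Theta(y)$ is a well‑defined map $C\to\Lambda_r$. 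It is surjective because for $m\in\Lambda_r$ admissibility gives $N$ with $E^{N}m\in\Lambda_r^{+}=\operatorname{Im}\Theta$, so $m=F^{N}\cdot\Theta(y)$ for suitable $y$; it is injective because $F^{L}$ acts invertibly on $\Lambda_r$ and $\Theta$ is injective, so $F^{L}\Theta(y)=0$ forces $y=0$. Thus $C\cong\Lambda_r$. The main obstacle, as I see it, is the bijectivity of $\Theta$ — concretely, matching $\mb{Z}$‑bases closely enough to see that $\operatorname{Im}\Theta$ exhausts $\Lambda_r^{+}$, i.e.\ that every basis vector $Q_S P_T E^{k}1_{\Lambda_r}$ of $\Lambda_r^{+}$ is a word in $P_{0,i},Q_{0,i},E,(\prod_i P_{1,i}Q_{1,i})F$ modulo $I$, which requires reconciling the $e$‑ and $f$‑tails of normal forms in $\mc{E}(r)$ with the twisted image of $f$; a secondary point is having the categorical foundations of $\mathrm{Mod}_\infty^{R}$ (its abelian structure and exactness) available, which is deferred to \cref{sec:Morita}.
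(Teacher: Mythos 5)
Your construction of $\Theta$ and the verification of the relations \cref{eq3} on the images are correct (your element $(\prod_iP_{1,i}Q_{1,i})F1_{\Lambda_r}$ is exactly the paper's $FP_{0,[r]}Q_{0,[r]}1_{\Lambda_r}$), and the smallness and projectivity arguments go through. But the heart of the proposition --- bijectivity of $\Theta\colon\mc{E}(r)\to\mathrm{End}_{\mf{E}(r)}(\Lambda_r)^{op}\cong\Lambda_r^{+}$ --- is precisely the step you defer to a ``basis count'', and as sketched that step does not work. Each $E$-graded piece of $\Lambda_r^{+}$ is a free $\mb{Z}$-module of countably infinite rank, so comparing graded pieces proves nothing; to match bases you would first need a normal form ($\mb{Z}$-basis) for $\mc{E}(r)$, which is given only by generators and relations, and producing such a normal form is essentially equivalent to the statement being proved. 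Moreover, your proposed description of the degree-$k$ piece of $\Lambda_r^{+}$ for $k<0$ (modes from $\mb{Z}_{\le0}\times[r]$ together with the forced prefactor $\prod_{0<a\le|k|,\,i}P_{a,i}$) is not correct: on the modes $0<a\le|k|$ the joint kernel of the $P_{a,i}$ has rank $2^{r|k|}$ (for a single mode it is spanned by $Pv$ \emph{and} $PQv$, not by $Pv$ alone); indeed your own element $\Theta(f)=\prod_iP_{1,i}Q_{1,i}F1_{\Lambda_r}$ is not of the claimed shape. So surjectivity of $\Theta$ (that every vector of $\Lambda_r$ killed by all positive $P_{a,i}$ is a word in $P_{0,i},Q_{0,i},E,\prod_iP_{1,i}Q_{1,i}F$ applied to $1_{\Lambda_r}$) and injectivity beyond $\mathrm{Cl}(r)$ remain unproved, and your final identification $C\cong\Lambda_r$ also depends on them.

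This is exactly the gap the paper closes with \cref{lem:tilde}: identifying $\Lambda_r$ with the colimit $\Lambda_{\infty,r}$ of \cref{lambda:r}, after which surjectivity follows from a short downward induction --- if $w=t^{m}v$ with $m>0$ is killed by all positive $P_{a,i}$, then $p_{0,i}v=0$ for all $i$, hence $v=p_{[r]}q_{[r]}v=efv$ by \cref{lem:45} and \cref{eq3}, so $w=t^{m-1}(fv)$ --- and injectivity holds because $fe=1$ makes the transition maps of the colimit injective. The price paid there is the construction of the left $\mf{E}(r)$-structure on $\Lambda_{\infty,r}$ (the operators $p_{a,i},q_{a,i}$ for $a\le0$ via the orthogonalization, \cref{def:ind}), and your outline offers no substitute for that work. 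Separately, fix a direction slip in your generator argument: if $P_{a,i}m=0$ for $a>N$, then it is $E^{N}m$, not $F^{N}m$, that is annihilated by all $P_{a,i}$ with $a>0$ (since $P_{a,i}E^{N}=E^{N}P_{a+N,i}$ while $P_{a,i}F^{N}=F^{N}P_{a-N,i}$); the map should send $1_{\Lambda_r}\mapsto E^{N}m$, and then $m=F^{N}E^{N}m$ lies in its image, as in the paper's choice $E^{a_0-1}w$.
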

Let $\mathrm{Mod}_0$ (resp. $\mathrm{Mod}_{\infty}$) be the abelian category of left $\mc{E}(r)$-modules (resp. left $\mf{E}(r)$-modules). By the Morita theory (see Theorem 2.5 of \cite{Morita}), there is a Morita equivalence between $\mathrm{Mod}_0$ and $\mathrm{Mod}_{\infty}^R$ by the functors: 
\begin{align*}
  \mf{H}_{\infty}: \mathrm{Mod}_0 &\to \mathrm{Mod}_{\infty}, \quad V\mapsto \Lambda_{r}\otimes_{\mc{E}(r)}V, \\
  \mc{H}_0: \mathrm{Mod}_{\infty} &\to \mathrm{Mod}_0, \quad W\mapsto \mathrm{Hom}_{\mf{C}(r)}(\Lambda_{r},W).
\end{align*}

\subsection{The bimodule structure}\label{sec:Morita} 
We first give the definition of admissible left $\mf{E}(r)$-modules.
For any $I=(d_0<d_1<\cdots<d_k)\subset [r]$ and $a,b\in \mb{Z}$, let
$$P_{a,I}:=P_{a,d_{k}}P_{a,d_{k-1}}\cdots P_{a,d_0},Q_{b,I}:=Q_{b,d_{0}}Q_{b,d_{1}}\cdots Q_{b,d_k}.$$ For $V\in \mathrm{Mod}_\infty$,  let
\begin{gather*}
    \mc{H}_m(V):=\bigcap_{a> m, i\in [r]} \ker(P_{a,i}), \quad  \mc{K}_m(V):=\bigcap_{a\leq m, i\in [r]} \ker(Q_{a,i}). \\
    \quad \mc{H}_{\infty}(V):=\bigcup_{m\in \mb{Z}}\mc{H}_m(V),\quad 
          \quad \mc{K}_{-\infty}(V):=\bigcup_{m\in \mb{Z}}\mc{K}_m(V).
  \end{gather*}
We notice that $\mc{H}_{\infty}(V)$ and $\mc{K}_{-\infty}(V)$ are also $\mf{C}(r)$ sub-representations of $V$. The representation $V$ is defined to be \textbf{right admissible} (resp. \textbf{left admissible}, \textbf{admissible}) if $\mc{H}_{\infty}(V)=V$ (resp. $\mc{K}_{-\infty}(V)=V$, $\mc{H}_{\infty}(V)\cap\mc{K}_{-\infty}(V)=V$). Let $\mathrm{Mod}_{\infty}^{R}$ be the full subcategory of $\mathrm{Mod}_{\infty}$ consisting of right admissible representations.

Before proving \cref{prop:bimodule}, we give the following corollary, which describes the functors $\mc{H}_0$ and $\mf{H}_{\infty}$ explicitly.
  \begin{corollary}
    \label{cor:equivalence1} For any left $\mf{E}(r)$-module $W$, we have 
  \begin{equation*}
  \mc{H}_0(W)\cong \{w\in W \mid P_{a,i}w=0, \forall a>0, i\in [r]\};  
  \end{equation*}
  and for any left $\mc{E}(r)$-module $V$, we have the \textbf{colimit}
  \begin{equation}
    \label{colimit}
  \mf{H}_{\infty}(V)=\varinjlim_{a\to +\infty} (t^{a}V)  
  \end{equation}
  where $t^aV$ is a copy of $V$ for every $a\in \mathbb{Z}$ and the morphism from $t^{a}V$ to $t^{a+1}V$ is given by $t^av\to t^{a+1}\cdot (ev)$. In particular, $t^aV$ is a $\mb{Z}$-submodule of $\mf{H}_{\infty}(V)$.

\end{corollary}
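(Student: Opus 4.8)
The plan is to read off both assertions directly from the two presentations of $\Lambda_r$ recorded in \cref{prop:bimodule}, by plugging them into the definitions $\mc{H}_0(W)=\mathrm{Hom}_{\mf{E}(r)}(\Lambda_r,W)$ and $\mf{H}_\infty(V)=\Lambda_r\otimes_{\mc{E}(r)}V$ of the Morita functors. No input beyond \cref{prop:bimodule} and the relations \cref{eq3} defining $\mc{E}(r)$ should be needed.

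For the first identity, I would argue that by definition $\Lambda_r$ is the cyclic left $\mf{E}(r)$-module $\mf{E}(r)/\mf{a}$, where $\mf{a}:=\sum_{a>0,\,i\in[r]}\mf{E}(r)\,P_{a,i}$ is the left ideal annihilating the generator $1_{\Lambda_r}$. Evaluation at the image of $1_{\Lambda_r}$ then gives a bijection
\[
\mathrm{Hom}_{\mf{E}(r)}(\Lambda_r,W)\ \xrightarrow{\ \sim\ }\ \{\,w\in W \mid \mf{a}\,w=0\,\}=\{\,w\in W\mid P_{a,i}w=0,\ \forall a>0,\ i\in[r]\,\},
\]
and the right-hand side is precisely $\mc{H}_0(W)$ in the sense of the definition preceding \cref{prop:bimodule} (namely $\mc{H}_m(W)$ with $m=0$). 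It then remains to check that this bijection is $\mc{E}(r)$-linear: the left $\mc{E}(r)$-action on $\mathrm{Hom}_{\mf{E}(r)}(\Lambda_r,W)$ is induced by the right $\mc{E}(r)$-action on $\Lambda_r$ of \cref{prop:bimodule} via $(x\cdot\phi)(1_{\Lambda_r})=\phi(1_{\Lambda_r}\cdot x)$, and one unwinds this to see it agrees with the $\mc{E}(r)$-action already carried by the subspace $\mc{H}_0(W)\subset W$.

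For the colimit, I would use the other presentation in \cref{prop:bimodule}: as a right $\mc{E}(r)$-module, $\Lambda_r$ is the quotient of the free right $\mc{E}(r)$-module on symbols $g_l$ ($l\in\mathbb Z$) by the relations $g_l=g_{l+1}e$. Comparing this with the presentation of a sequential colimit identifies $\Lambda_r$, as a right $\mc{E}(r)$-module, with the colimit of the directed system indexed by $l\in\mathbb Z$,
\[
\cdots\ \longrightarrow\ \mc{E}(r)\ \xrightarrow{\ x\,\mapsto\,ex\ }\ \mc{E}(r)\ \xrightarrow{\ x\,\mapsto\,ex\ }\ \mc{E}(r)\ \longrightarrow\ \cdots,
\]
with $g_l$ corresponding to the image of $1$ in the $l$-th copy. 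Since $-\otimes_{\mc{E}(r)}V$ preserves colimits, applying it gives
\[
\mf{H}_\infty(V)=\Lambda_r\otimes_{\mc{E}(r)}V\ \cong\ \varinjlim_{l}\big(\mc{E}(r)\otimes_{\mc{E}(r)}V\big)=\varinjlim_{l}\,(t^l V),
\]
where $t^lV=V$ and the transition map $t^lV\to t^{l+1}V$ is $t^lv\mapsto t^{l+1}(ev)$, which is the claimed colimit. Finally, the relation $fe=1$ of \cref{eq3} makes $v\mapsto ev$ injective on $V$ (if $ev=0$ then $v=f(ev)=0$), so every map in the directed system is injective and each $t^aV$ embeds as a $\mathbb Z$-submodule of $\mf{H}_\infty(V)$.

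Given \cref{prop:bimodule}, the argument is essentially formal, and I do not expect a serious obstacle; the only point demanding care is the bookkeeping of the two commuting actions on $\Lambda_r$ — the left $\mf{E}(r)$-action and the right $\mc{E}(r)$-action coming from $\mc{E}(r)=\mathrm{End}_{\mf{E}(r)}(\Lambda_r)^{op}$ — so as to be sure the identifications above are equivariant and that the colimit runs in the direction $a\to+\infty$ rather than $a\to-\infty$.
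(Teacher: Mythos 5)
Your proof is correct and follows essentially the same route as the paper: the first identity by evaluating homomorphisms out of the cyclic module $\Lambda_r\cong\mf{E}(r)/\mf{J}(r)$ at its generator, and the colimit formula by using the right-module presentation of $\Lambda_r$ from \cref{prop:bimodule} (i.e.\ $\Lambda_r\cong\Lambda_{\infty,r}\cong\varinjlim t^a\mc{E}(r)$) together with the fact that $-\otimes_{\mc{E}(r)}V$ commutes with colimits. Your added observation that $fe=1$ forces $v\mapsto ev$ to be injective, justifying the ``in particular'' clause, is a correct point the paper leaves implicit.
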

\begin{proof}
  The formula for $\mc{H}_0(W)$ follows from the definition of $\Lambda_r$. The formula \cref{colimit} follows from the fact that, as a right $\mc{E}(r)$-module, $\Lambda_r$ can be written as
    \begin{equation}
      \label{lambda:r}
      \Lambda_{\infty,r}:= \bigoplus_{a\in \mb{Z}} t^{a}\mc{E}(r)/\langle t^{a} - t^{a+1}e, a\in \mb{Z}\rangle\cong\varinjlim_{a\to +\infty} (t^{a}\mc{E}(r))  
    \end{equation}
  where $t^{a}\mc{E}(r)$ is a copy of $\mc{E}(r)$ for every $a\in \mathbb{Z}$ and the morphism from $t^{a}\mc{E}(r)$ to $t^{a+1}\mc{E}(r)$ is given by $t^av\to t^{a+1}\cdot (e\cdot v)$ for every $v\in \mc{E}(r)$. 
\end{proof}

The key to proving \cref{prop:bimodule} is the identification of $\Lambda_r$ and $\Lambda_{\infty,r}$ as an $\mf{E}(r)$-$\mc{E}(r)$ bimodule.
\begin{lemma}
  \label{lem:tilde}
    Let $\widetilde{\Lambda}_{r}$ be the $\mf{E}(r)-\mc{E}(r)$-bimodule with a generator $1_{\widetilde{\Lambda}_r}$ and the relation 
\begin{align}
  \label{eq:verma}
  P_{a,i}1_{\widetilde{\Lambda}_r}=0, \quad \forall a>0, i\in [r], \\
 \nonumber 1_{\widetilde{\Lambda}_r}e = E 1_{\widetilde{\Lambda}_r}, \quad 
  1_{\widetilde{\Lambda}_r}f = FP_{0,[r]}Q_{0,[r]} 1_{\widetilde{\Lambda}_r}, \\
 \nonumber 1_{\widetilde{\Lambda}_r}p_{i} = P_{0,i} 1_{\widetilde{\Lambda}_r}, \quad
  1_{\widetilde{\Lambda}_r}q_{i} = Q_{0,i} 1_{\widetilde{\Lambda}_r}.
\end{align}
Then $\widetilde{\Lambda}_{r}\cong \Lambda_r$ as a left $\mf{E}(r)$-module and $\widetilde{\Lambda}_{r}\cong \Lambda_{\infty,r}$ as a right $\mc{E}(r)$-module. 
\end{lemma}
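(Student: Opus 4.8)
The plan is to establish the two isomorphisms separately, in each case by exhibiting a pair of mutually inverse module homomorphisms. Throughout I identify $\Lambda_r$ with the quotient $\mf{E}(r)/L$, where $L:=\sum_{a>0,\ i\in[r]}\mf{E}(r)\,P_{a,i}$ is the defining left ideal, so that $\mathrm{ann}_{\mf{E}(r)}(1_{\Lambda_r})=L$ exactly.

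For $\widetilde{\Lambda}_r\cong\Lambda_r$ as left $\mf{E}(r)$-modules, one direction is formal: since the right-action relations defining $\widetilde{\Lambda}_r$ rewrite $1_{\widetilde{\Lambda}_r}\cdot x$, for $x$ any generator of $\mc{E}(r)$, as an element of $\mf{E}(r)\cdot 1_{\widetilde{\Lambda}_r}$, an induction on word length gives $\widetilde{\Lambda}_r=\mf{E}(r)\cdot 1_{\widetilde{\Lambda}_r}$; as $P_{a,i}1_{\widetilde{\Lambda}_r}=0$ for $a>0$, the assignment $1_{\Lambda_r}\mapsto 1_{\widetilde{\Lambda}_r}$ is a well-defined surjection of left $\mf{E}(r)$-modules $\Lambda_r\twoheadrightarrow\widetilde{\Lambda}_r$. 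For the reverse, I would equip $\Lambda_r$ with a right $\mc{E}(r)$-action commuting with its left $\mf{E}(r)$-action, by letting $e,f,p_i,q_i$ act as right multiplication (descended along $\mf{E}(r)\twoheadrightarrow\Lambda_r$) by $E$, $FP_{0,[r]}Q_{0,[r]}$, $P_{0,i}$, $Q_{0,i}$, respectively. This requires: (a) that each of these right multiplications preserves $L$ — for $E$ from $P_{a,i}E=EP_{a+1,i}$, for $P_{0,i},Q_{0,i}$ from $\{P_{a,i},P_{0,j}\}=\{P_{a,i},Q_{0,j}\}=0$ when $a>0$, and for $FP_{0,[r]}Q_{0,[r]}$ from $P_{a,i}F=FP_{a-1,i}$ together with $P_{0,i}^2=0$; and (b) that the descended operators satisfy the defining relations of $\mc{E}(r)$ — the Clifford relations are inherited from those among $P_{0,i},Q_{0,i}$; $p_ie=0$ follows from $P_{0,i}E=EP_{1,i}\in L$; $fq_i=0$ from $Q_{0,[r]}Q_{0,i}=0$; $ef=p_{[r]}q_{[r]}$ from $EF P_{0,[r]}Q_{0,[r]}=P_{0,[r]}Q_{0,[r]}$; and $fe=1$ from the congruence $P_{1,[r]}Q_{1,[r]}\equiv 1\pmod L$. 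This turns $\Lambda_r$ into an $\mf{E}(r)$-$\mc{E}(r)$-bimodule whose generator $1_{\Lambda_r}$ satisfies every defining relation of $\widetilde{\Lambda}_r$, so $1_{\widetilde{\Lambda}_r}\mapsto 1_{\Lambda_r}$ is a bimodule homomorphism $\widetilde{\Lambda}_r\to\Lambda_r$ inverse, on generators, to the surjection above. Hence $\widetilde{\Lambda}_r\cong\Lambda_r$ as bimodules, a fortiori as left $\mf{E}(r)$-modules.

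For $\widetilde{\Lambda}_r\cong\Lambda_{\infty,r}$ as right $\mc{E}(r)$-modules, using the bimodule identification just obtained it is enough to prove $\Lambda_{\infty,r}\cong\Lambda_r$ as right $\mc{E}(r)$-modules. The natural candidate is the right $\mc{E}(r)$-linear map $\phi\colon\Lambda_{\infty,r}\to\Lambda_r$ determined by $t^a\mapsto F^a1_{\Lambda_r}$; it is well defined because $F^{a+1}1_{\Lambda_r}\cdot e=F^{a+1}\cdot(E\cdot 1_{\Lambda_r})=F^a1_{\Lambda_r}$, matching the relation $t^a=t^{a+1}e$. The work lies in bijectivity. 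For surjectivity the strategy is to show that $N:=\mathrm{im}\,\phi=\sum_{a\in\mb{Z}}F^a1_{\Lambda_r}\cdot\mc{E}(r)$ is stable under the left $\mf{E}(r)$-action: it contains $1_{\Lambda_r}$, it is stable under $E^{\pm1}$ and (being a right $\mc{E}(r)$-submodule) under the right generators, so it suffices to check $P_{c,j}1_{\Lambda_r},\,Q_{c,j}1_{\Lambda_r}\in N$ for all $c\in\mb{Z}$ — the cases $c\ge 0$ being immediate from $P_{c,j}1_{\Lambda_r}=0$ (for $c>0$) and $Q_{c,j}1_{\Lambda_r}=F^c1_{\Lambda_r}\cdot q_je^c\in N$ — after which $N=\mf{E}(r)\cdot1_{\Lambda_r}=\Lambda_r$. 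For injectivity, the plan is to compare $\mb{Z}$-bases: a PBW normal form for $\mf{E}(r)\cong A\rtimes\mb{Z}$, with $A$ the infinite Clifford algebra on the $P_{a,i},Q_{a,i}$, descends to an explicit $\mb{Z}$-basis of $\Lambda_r=\mf{E}(r)/L$, and matching it against the basis of the colimit $\Lambda_{\infty,r}=\varinjlim\bigl(\mc{E}(r)\xrightarrow{e\cdot}\mc{E}(r)\xrightarrow{e\cdot}\cdots\bigr)$ coming from a PBW basis of $\mc{E}(r)$ shows that the only right-module relations among the $F^a1_{\Lambda_r}$ are the displayed ones $F^a1_{\Lambda_r}=F^{a+1}1_{\Lambda_r}\cdot e$.

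I expect the second isomorphism to be the main obstacle. Right multiplication by $f=FP_{0,[r]}Q_{0,[r]}$ introduces the nontrivial idempotent $P_{0,[r]}Q_{0,[r]}$, and controlling how it interacts with the Clifford generators after an $E$-shift — in particular which elements $P_{c,j}1_{\Lambda_r}$, $Q_{c,j}1_{\Lambda_r}$ with $c<0$ remain in $N$, and the precise shape of the PBW normal forms — is what makes the bookkeeping for both the surjectivity and the injectivity of $\phi$ delicate. The first isomorphism, by contrast, reduces to the finitely many relation verifications listed above once the bimodule structure on $\Lambda_r$ is in place.
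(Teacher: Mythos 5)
Your first half is fine and is essentially the paper's own argument: the paper also identifies $\Lambda_r=\mf{E}(r)/\mf{J}(r)$, checks that right multiplication by $E$, $FP_{0,[r]}Q_{0,[r]}$, $P_{0,i}$, $Q_{0,i}$ preserves the defining left ideal (its \cref{eq:revise111}), and concludes that $1_{\Lambda_r}$ satisfies the relations \cref{eq:verma}, so the evident surjection $\Lambda_r\twoheadrightarrow\widetilde{\Lambda}_r$ splits. Your explicit verification of the $\mc{E}(r)$-relations is welcome (the paper leaves it as ``direct computation''); only note that for $fe=1$ you need slightly more than the congruence $P_{1,[r]}Q_{1,[r]}\equiv 1\pmod L$, namely that $1-P_{1,[r]}Q_{1,[r]}$ is a sum of monomials each ending in some $P_{1,i}$ (which follows from \cref{cor:faith}), since $L$ is only a left ideal and the relation must hold as a right-multiplication operator on all of $\Lambda_r$, not just on $1_{\Lambda_r}$.

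For the second isomorphism there is a genuine gap, and it sits exactly at the point you defer to ``bookkeeping''. Your surjectivity reduction is sound up to the step $P_{c,j}1_{\Lambda_r},\,Q_{c,j}1_{\Lambda_r}\in N$ for $c<0$ (for $c\geq 0$ your formulas work, since $q_je^c\in\mc{E}(r)$ only when $c\geq 0$). But producing an element $x\in\mc{E}(r)$ with, say, $P_{-1,j}1_{\Lambda_r}=\sum_a F^a1_{\Lambda_r}\cdot x_a$ is not routine: it is precisely the construction of the negative-shift operators $p_{a,i},q_{a,i}\in\mc{E}(r)$, $a\le 0$, which the paper defines recursively in \cref{def:ind} by $p_{a,i}=\sum_{I}(-1)^{\mathrm{len}(I)}e_Ip_{a+1,i}f_I$ and for which it must verify the intertwining relations \cref{eq222} and the infinite Clifford relations \cref{eq:er4}. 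That computation is the heart of \cref{lem:tilde}; without it neither your surjectivity nor your injectivity argument gets off the ground. Moreover, your injectivity plan (matching a PBW basis of $\Lambda_r$ against a basis of the colimit $\Lambda_{\infty,r}$) is unexecuted and harder than necessary: once the negative-shift operators are available, the paper assembles them into a left $\mf{E}(r)$-action on $\Lambda_{\infty,r}$ (via \cref{def:plqi}) whose generator $t^0$ satisfies the relations \cref{eq:verma}; the universal property of $\widetilde{\Lambda}_r$ then gives a bimodule map splitting the right-module surjection $\Lambda_{\infty,r}\to\widetilde{\Lambda}_r$, so injectivity comes for free and no explicit $\mathbb{Z}$-basis of $\mc{E}(r)$ or $\Lambda_r$ is ever needed. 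I would advise replacing your $\phi$-plus-basis strategy for this half by that construction, or equivalently supplying the missing construction and relation checks, since as written the proposal does not prove $\widetilde{\Lambda}_r\cong\Lambda_{\infty,r}$.
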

\begin{proof}
    As a left $\mf{E}(r)$-module, $\widetilde{\Lambda}_r$ is generated by $1_{\widetilde{\Lambda}_r}$ and hence is a quotient of $\Lambda_r$. To prove the quotient map is an isomorphism, we only need to prove that there is a right $\mc{E}(r)$-structure on $\Lambda_{r}$ such that \cref{eq:verma} also holds if we replace $1_{\widetilde{\Lambda}_{r}}$ by $1_{\Lambda_r}$. Let $\mf{J}(r)$ be the left ideal of $\mf{E}(r)$ generated by $P_{a,i}$ for all $a>0$ and $i\in [r]$. Then we have
$\Lambda_{r}\cong\mf{E}(r)/\mf{J}(r)$. Moreover, by direct computation, for $x\in \mf{J}(r)$, we have
  \begin{equation}
    \label{eq:revise111}
  xP_{0,i}\in \mf{J}(r), \quad xQ_{0,i}\in \mf{J}(r), \quad xE\in \mf{J}(r), \quad xFP_{0,[r]}Q_{0,[r]}\in \mf{J}(r).
  \end{equation}
  In particular, the operators given by right multiplication,
  \begin{equation*}
    e:=E, \quad f:=FP_{0,[r]}Q_{0,[r]}, \quad p_{i}:=P_{0,i}, \quad q_{i}:=Q_{0,i}
  \end{equation*}
  define a right $\mc{E}(r)$-module structure on $\Lambda_{r}$.
    
  As a right $\mc{E}(r)$-module, $\widetilde{\Lambda}_r$ is generated by $F^{a}1_{\widetilde{\Lambda}_r}$ for all $a\in \mb{Z}$, as every monomial element in $\mf{E}(r)$ can be written in the form 
  \begin{equation*}
F^aP_{b_1,I_1}Q_{b_1,J_1}P_{b_2,I_2}Q_{b_2,J_2}\cdots P_{b_k,I_k}Q_{b_k,J_k} 
  \end{equation*}
with $b_1 < b_2 < \cdots < b_k$ and $I_j, J_j \subset [r]$ for $1 \leq j \leq k$.
    Hence $\widetilde{\Lambda}_r$ is a quotient of $\Lambda_{\infty,r}$ as right $\mc{E}(r)$-modules. To prove that the quotient map is an isomorphism, we only need to prove that there exists a $\mf{E}(r)$-left module structure on $\Lambda_{\infty,r}$ such that $Ft^a=t^{a+1}$ for all $a\in \mb{Z}$ and 
    \begin{align}
      \label{eq:revise}
  P_{a,i}t^0=0 \text{ for all } a>0, i\in [r], \quad
 t^0p_{i} = P_{0,i} t^0, \quad
  t^0q_{i} = Q_{0,i} t^0.
\end{align}

To construct the $\mf{E}(r)$-left module structure on $\Lambda_{\infty,r}$, we first inductively define $p_{a,i},q_{a,i}\in \mc{E}(r)$ for all $a\leq 0$ by $p_{0,i}=p_{i}$ and $q_{0,i}=q_{i}$ and
    \begin{equation}
    \label{def:ind}
            p_{a,i}:=\sum_{I\subset [r]}(-1)^{\mathrm{len}(I)}e_{I}p_{a+1,i}f_I,\quad  q_{a,i}:=\sum_{I\subset [r]}(-1)^{\mathrm{len}(I)}e_Iq_{a+1,i} f_I.
        \end{equation}
        for $a<0$, where $e_I$ and $f_I$ for all $I\subset [r]$ are defined by \cref{eq:inverse}. By direct computation, for all $a \leq 0$, we have
        \begin{equation}
           \label{eq222}
    ep_{a,i}=p_{a-1,i}e, \quad fp_{a-1,i}=p_{a,i}f, \quad eq_{a,i}=q_{a-1,i}e, \quad  fq_{a-1,i}=q_{a,i} f.
        \end{equation}
        Now we prove that for all $a,b\leq 0$ we have
      \begin{gather}
    \label{eq:er4}
    p_{a,i}^2 = q_{a,i}^2 = 0, \quad
    \{p_{a,i}, p_{b,j}\}=\{q_{a,i}, q_{b,j}\}=0, \quad \{p_{a,i}, q_{b,j}\}=\delta_{i,j}\delta_{a,b}.
      \end{gather}
       By \cref{cor:faith} and \cref{def:ind}, we can reduce to the case that $a=0$. If $b=0$, \cref{eq:er4} just follows from the definition of $\mc{E}(r)$.
    If $b<0$, for any $J\subset [r]$, we notice that 
    \begin{equation*}
        p_{0,i}e_{J}=\begin{cases}
            0 & i\not\in J, \\
            (-1)^{|\{j\in J|j<i\}|} e_{J-\{i\}} & i\in J,
        \end{cases}
        \quad q_{0,i}e_{J}=\begin{cases}
            0 & i\in J, \\
            (-1)^{|\{j\in J|j<i\}|} e_{J+\{i\}}& i\not\in J.   
        \end{cases}
    \end{equation*}
   We have $\{p_{0,i},p_{b,j}\}e_Jf_J=0$ for all $J\subset [r]$, as if $i\not\in J$ we have
    \begin{align*}
        p_{0,i}p_{b,j}e_Jf_J=(-1)^{\mathrm{len}(J)}p_{0,i}e_{J}p_{b+1,j}f_{J}=0, \quad 
        p_{b,j}p_{0,i}e_{J}f_{J}=0,
    \end{align*}
    and if $i\in J$ we have 
    \begin{align*}
        p_{0,i}p_{b,j}e_{J}f_{J}=(-1)^{|\{j\in J|j\geq i\}|}e_{J-i}p_{b+1,j}f_{J}, \\
        p_{b,j}p_{0,i}e_{J}f_{J}=(-1)^{|\{j\in J|j> i\}|}e_{J-i}p_{b+1,j}f_{J}.
    \end{align*}
    As $1=\sum_{J\subset [r]} e_Jf_J$, we have $\{p_{0,i},p_{b,j}\}=0$. Other formulas follow from a similar computation.

    Now we come back to $\Lambda_{\infty,r}$. For any $a\in \mb{Z}$, $t^a\mc{E}(r)$ in \cref{lambda:r} is a $\mc{E}(r)$-submodule of $\Lambda_{\infty,r}$, as the map from $t^{a}\mc{E}(r)$ to $t^{a+1}\mc{E}(r)$ is injective. Moreover, by the colimit property, for any integer $a\in \mathbb{Z}$,
$\Lambda_{\infty,r}$ is the union of all $t^m\mc{E}(r)$ for $m\geq a$. The operators $E,F$ on
    $\bigoplus_{a\in \mb{Z}} t^{a}\mc{E}(r)$ defined by $E(t^m\cdot v)=t^{m-1}\cdot v$ and $F(t^m\cdot v)=t^{m+1}\cdot v$ for any $m\in \mb{Z}$ and $v\in \mc{E}(r)$ descend to operators on $\Lambda_{\infty,r}$. 
    By \cref{eq222}, for all $m \geq a$, the operators $P_{a,i}$ and $Q_{a,i}$ on $t^m \mc{E}(r)$, defined by
\begin{equation}
  \label{def:plqi}
  P_{a,i}(t^m\cdot v):= t^m\cdot p_{a-m,i}v, \quad Q_{a,i}(t^m\cdot v):=t^m\cdot q_{a-m,i}v,
\end{equation}
descend to well-defined operators on $\Lambda_{\infty,r}$. Moreover, by \cref{eq222} and \cref{eq:er4}, the operators $P_{a,i},Q_{a,i},E,F$ form a left $\mf{C}(r)$-module structure on $\Lambda_{\infty,r}$ with the generator $t^0$. By direct computation, \cref{eq:revise} also holds. Thus, we finish the proof.
\end{proof}

\begin{proof}[Proof of \cref{prop:bimodule}]The right $\mc{E}(r)$-structure on $\Lambda_r$ follows from \cref{lem:tilde}. Now we prove that $\mc{E}(r)\cong \mathrm{End}_{\mf{E}(r)}(\Lambda_{r})^{op}$. We apply the equivalence $\Lambda_r\cong \Lambda_{\infty,r}$ and notice that   
\begin{equation}
\label{ineqeee}
  \mathrm{Hom}_{\mf{E}(r)}(\Lambda_{r},\Lambda_{r})\cong \{x\in \Lambda_{\infty,r}|
P_{a,i}x=0 \text{ for all } a>0, i\in [r]\}\end{equation}
which contains $t^0\mc{E}(r)$ by \cref{eq:revise}. On the other hand, let $w=t^m v$ in \cref{ineqeee} for $m>0$ and $v\in \mc{E}(r)$. Then by \cref{cor:faith}, we have $v=P_{m,[r]}Q_{m,[r]}v$ and thus
    \begin{equation*}
        w=P_{m,[r]}Q_{m,[r]}(t^m\cdot v)=t^m \cdot (p_{[r]}q_{[r]}v)=t^m\cdot (efv)=t^{m-1}\cdot (fv),
    \end{equation*}
    and hence is in the image of $t^{m-1}\mc{E}(r)$. Thus by induction $w\in t^{0}\mc{E}(r)$.

Finally, we prove that $\Lambda_r$ is a small projective generator of $\mathrm{Mod}_{\infty}^{R}$. First, by definition, $\Lambda_r$ is a right admissible left $\mf{E}(r)$-module. 
Moreover, since $\Lambda_r$ is finitely presented as a left $\mathfrak{E}(r)$-module, it is a small (i.e., compact) object in $\mathrm{Mod}_{\infty}$, and thus also in $\mathrm{Mod}_{\infty}^R$. Second, we prove that $\Lambda_r$ is projective in $\mathrm{Mod}_{\infty}^{R}$. We only need to prove that given right admissible left $\mf{E}(r)$-modules $W_{1}\subset W_2$, for any $w\in W_2$ such that $P_{a,i}w\in W_1$ for all $a>0, i\in [r]$, there exists $v\in W_1$ such that $P_{a,i}(w-v)=0$ for all $a>0, i\in [r]$. Taking $a_0>0$ such that $P_{a,i}w=0$ for all $a\geq a_0$, we define
    \begin{equation*}
        v:=w-P_{a_0-1,[r]}Q_{a_0-1,[r]}P_{a_0-2,[r]}Q_{a_0-2,[r]}\cdots P_{1,[r]}Q_{1,[r]}w.
    \end{equation*}
    Then $v\in W_1$ and $P_{a,i}(w-v)=0$ for all $a>0, i\in [r]$ by induction and \cref{cor:faith}.
    
    We still need to prove that $\Lambda_r$ is a generator of $\mathrm{Mod}_{\infty}^{R}$. For any $W\in \mathrm{Mod}_{\infty}^{R}$ and any $w\in W$, there exists $a_0> 0$ such that $P_{a,i}w=0$ for all $a\geq a_0, i\in [r]$. Then we have a morphism $\phi:\Lambda_r\to W$ by sending $1_{\Lambda_r}$ to $E^{a_0-1}w$ such that $w$ is in the image of $\phi$. Hence $W$ is a quotient of a direct sum of copies of $\Lambda_r$.
\end{proof}

\subsection{A precise formulation of \texorpdfstring{\cref{eq:act}}{}} Now we give a precise formulation of \cref{eq:act} and its cohomological variants. In particular, we formulate the degree shift of the action under different gradings, which can be obtained through direct computations.
\begin{proposition}
\label{shift1}
    There exists a $\mc{E}(r)$-representation on $$ \bigoplus_{l,n\in \mb{Z}} \mathrm{G}_0(\mathrm{M}^0(l,n))$$
with two gradings induced by $l$ and $n$, which we call the first and second grading. The operators $e,f,p_i,q_i$ shift the first degree and second degree by
\begin{equation}
    \label{deg:shift}
    -r,r,-1,1 \text{ and } -l,l+r,0,0
\end{equation}
respectively. Moreover, if \cref{assumptionS} is satisfied, there exists a $\mc{E}(r)$ action on 
\begin{equation*}
    \bigoplus_{l,n\in \mb{Z}} \mathrm{HH}_*(\mathrm{M}^0(l,n))
\end{equation*}
which is triply graded by the first and second gradings and the homological grading. The degree shifts in the first and second gradings are the same as in \cref{deg:shift}, and the homological grading is preserved under the action.
\end{proposition}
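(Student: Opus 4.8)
The plan is to produce the $\mc{E}(r)$-action by assembling, over all $n\in\mb{Z}$, the $\mathrm{Cl}(r)$-representations and extended operators furnished by \cref{thm:1} (and, in the Hochschild case, \cref{thm:hos}) applied to the universal complex. Recall that the standing hypothesis $\mathrm{gcd}(r,c_1\cdot H)=1$ from \cref{sec:gra} guarantees a universal sheaf on $\mc{M}_H(n)\times S$, so that $\mc{U}_o$ is a well-defined rank $r$, Tor-amplitude $[0,1]$ perfect complex on $\mc{M}_H(n)$ for each $n$. By \cref{thm:NY1} there are scheme-theoretic identifications $\mathrm{Gr}_{\mc{M}_H(n)}(\mc{U}_o,-l)\cong\mathrm{M}^0(l,n)$ and $\mathrm{Gr}_{\mc{M}_H(n)}(\mc{U}_o^\vee[1],-l)\cong\mathrm{M}^0(l,n+l)$. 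Fixing $n$ and applying \cref{thm:1} with $X=\mc{M}_H(n)$ and $\mc{E}=\mc{U}_o$ then equips $V_n:=\bigoplus_{l\in\mb{Z}}\mathrm{G}_0(\mathrm{M}^0(l,n))$ with a $\mathrm{Cl}(r)$-representation, which we take in its dual form as in \cref{rem:dualrep}, and, through \cref{prop:4}, with operators $e\colon W_n\to V_n$ and $f\colon V_n\to W_n$ obeying \cref{eq3}, where $W_n:=\bigoplus_{l\in\mb{Z}}\mathrm{G}_0(\mathrm{M}^0(l,n+l))$ is the space identified via \cref{thm:NY1} with $\bigoplus_{m\in\mb{Z}}\mathrm{G}_0\big(\mathrm{Gr}_{\mc{M}_H(n)}(\mc{U}_o^\vee[1],m)\big)$.

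Next I would glue these block structures. Reindexing summands shows that $\bigoplus_n V_n$ and $\bigoplus_n W_n$ are both descriptions of the single bigraded $\mb{Z}$-module $\mathbf{V}:=\bigoplus_{l,n\in\mb{Z}}\mathrm{G}_0(\mathrm{M}^0(l,n))$ — the first tautologically, the second through the shear $(l,n)\mapsto(l,n+l)$ — and in both descriptions every graded piece $\mathrm{G}_0(\mathrm{M}^0(l,n))$ occurs exactly once. Under these identifications the operators $p_i,q_i$, which act inside each $V_n$ and hence preserve $n$, together with the families $\{e\colon W_n\to V_n\}_n$ and $\{f\colon V_n\to W_n\}_n$, become $\mb{Z}$-linear endomorphisms of $\mathbf{V}$. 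All relations defining $\mc{E}(r)$ — the Clifford relations together with $p_ie=0$, $fq_i=0$, $fe=1$, $ef=p_{[r]}q_{[r]}$ — hold because they hold inside each block $V_n$ by \cref{prop:4}, and composing the global $e,f,p_i,q_i$ only ever recombines block-wise relations. Hence $\mathbf{V}$ is an $\mc{E}(r)$-module, which is the precise content of \cref{eq:act}.

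The degree shifts are then a direct computation. The first grading, by $l$, restricts on each block to minus the rank of the quotient, i.e.\ to the dual length grading on $\mathrm{F}(r)$; so item (4) of \cref{lem:grading2} together with \cref{thm:1} shows that $p_i,q_i$ shift the first degree by $-1$ and $+1$, while by \cref{rem:dualrep} the operator $e$ raises the rank of the quotient by $r$ and $f$ lowers it by $r$, giving first-degree shifts $-r$ and $+r$. Since $p_i,q_i$ preserve $n$, their second-degree shift is $0$; tracing $e$ and $f$ through the two descriptions of $\mathrm{M}^0$ in \cref{thm:NY1}, so that $e\colon\mathrm{G}_0(\mathrm{M}^0(l,n))\to\mathrm{G}_0(\mathrm{M}^0(l-r,n-l))$ and $f\colon\mathrm{G}_0(\mathrm{M}^0(l,n))\to\mathrm{G}_0(\mathrm{M}^0(l+r,n+l+r))$, reads off their second-degree shifts $-l$ and $l+r$, which is \cref{deg:shift}.

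For the Hochschild statement, under \cref{assumptionS} the entire argument runs verbatim with \cref{thm:hos} in place of \cref{thm:1}: by \cref{thm:smooth} every $\mc{M}_H(n)=\mathrm{M}^0(0,n)$ is a smooth projective variety (or empty), all the Grassmannians $\mathrm{M}^0(l,n)\cong\mathrm{Gr}_{\mc{M}_H(n)}(\mc{U}_o,-l)$ and $\mathrm{M}^0(l,n+l)\cong\mathrm{Gr}_{\mc{M}_H(n)}(\mc{U}_o^\vee[1],-l)$ are smooth projective, and $\mc{U}_o$ is $G$-smooth on $\mc{M}_H(n)$ as recorded before \cref{thm:4.4}. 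Thus \cref{thm:hos} puts a bigraded $\mathrm{Cl}(r)$-representation on $\bigoplus_l\mathrm{HH}_*(\mathrm{M}^0(l,n))$ with $\mathrm{F}(r)$ concentrated in Hochschild degree $0$, so $e,f,p_i,q_i$ preserve the homological grading; assembling over $n$ exactly as above yields the triply graded $\mc{E}(r)$-action with the same first- and second-degree shifts and with the homological grading preserved. The only point demanding real care, though it remains routine, is the index bookkeeping of the gluing step — verifying that the two ways of writing $\mathbf{V}$ as $\bigoplus_n V_n$ and as $\bigoplus_n W_n$ are compatible, so that $e$ and $f$ genuinely assemble into endomorphisms of one bigraded module — since everything substantive has already been established in \cref{thm:1}, \cref{thm:hos}, \cref{prop:4}, and \cref{thm:NY1}.
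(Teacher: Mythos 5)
Your proposal is correct and is essentially the paper's own argument: the paper likewise obtains the action by combining \cref{thm:NY1}'s identifications of $\mathrm{M}^0(l,n)$ with the two Grassmannians, the $\mathrm{Cl}(r)$-structure and the operators $e,f$ supplied by \cref{thm:1} (resp.\ \cref{thm:hos}) through \cref{prop:4}, and the observation that each summand $\mathrm{G}_0(\mathrm{M}^0(l,n))$ lies in exactly one horizontal slice $\bigoplus_{d}\mathrm{G}_0\big(\mathrm{Gr}_{\mc{M}_H(n)}(\mc{U}_o,d)\big)$ and exactly one diagonal slice $\bigoplus_{d}\mathrm{G}_0\big(\mathrm{Gr}_{\mc{M}_H(n)}(\mc{U}_o^{\vee}[1],d)\big)$, so that the blockwise relations of $\mc{E}(r)$ assemble globally. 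One terminological caveat: the representation produced by \cref{thm:1} already has $e$ raising the quotient rank by $r$ (the proof of \cref{shift2} explicitly contrasts this with the Chow case, where one must first pass to the dual representation of \cref{rem:dualrep}), so no further dualization should be applied — but since your degree computation uses precisely the properties that $e$ raises the quotient rank by $r$ and that $p_i,q_i$ shift it by $\pm 1$, the arithmetic and the conclusion are unaffected.
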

\begin{proposition} 
\label{shift2}
Still assuming \cref{assumptionS}, let 
  \begin{equation*}
    \mb{CH}^m(\mathrm{M}^0(l,n)):=\mathrm{CH}^{m-\frac{1}{2}(l(l+1))}(\mathrm{M}^0(l,n)).
    \end{equation*}
    Then there exists a $\mc{E}(r)$-action on 
    \begin{equation*}
        \bigoplus_{l,n}\mb{CH}^*(\mathrm{M}^0(l,n))
    \end{equation*}
    which is triply graded by the first grading, second grading, and cohomological grading. The degree shifts in the first and second gradings are the same as in \cref{deg:shift}, and the operators $e,f,p_i,q_i$ shift the cohomology degree by
    \begin{equation*}
        -rl+\frac{1}{2}r(r-1), r(l+r)-\frac{1}{2}r(r-1), i, -i.
    \end{equation*}
    Similar arguments also work for Hodge cohomology groups if we replace the grading with the bigrading.
\end{proposition}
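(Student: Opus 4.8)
The plan is to obtain both \cref{shift1} and \cref{shift2} by assembling, over the parameter $n\in\mb{Z}$, the representations produced by \cref{thm:1,thm:hos,thm:4.4}; the real work is bookkeeping of the index and degree conventions.

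First I would fix $n$ and use \cref{thm:NY1}: $\mathrm{M}^0(l,n)\cong \mathrm{Gr}_{\mc{M}_H(n)}(\mc{U}_o,-l)$ and $\mathrm{M}^0(l,n+l)\cong \mathrm{Gr}_{\mc{M}_H(n)}(\mc{U}_o^\vee[1],-l)$, where $\mc{U}_o$ is a rank $r$ Tor-amplitude $[0,1]$-perfect complex on the projective scheme $\mc{M}_H(n)=\mathrm{M}^0(0,n)$, which under \cref{assumptionS} is in addition smooth and has $\mc{U}_o$ a $G$-smooth complex by \cref{thm:smooth}. Applying \cref{thm:1} (for $\mathrm{G}_0$, likewise $\mathrm{K}_0$), \cref{thm:hos} (for Hochschild homology, needing \cref{assumptionS}), or \cref{thm:4.4} (for Chow and Hodge cohomology, needing \cref{assumptionS}) to $X=\mc{M}_H(n)$ and $\mc{E}=\mc{U}_o$, and invoking \cref{prop:4} together with \cref{rem:dualrep}, produces on the slice $V_n:=\bigoplus_{l\in\mb{Z}}\mb{H}^*(\mathrm{M}^0(l,n))$ a $\mathrm{Cl}(r)$-representation (the dual representation) and operators
\begin{equation*}
  e_n\colon \bigoplus_{l\in\mb{Z}}\mb{H}^*(\mathrm{M}^0(l,n+l))\to V_n, \qquad f_n\colon V_n\to \bigoplus_{l\in\mb{Z}}\mb{H}^*(\mathrm{M}^0(l,n+l))
\end{equation*}
satisfying the relations \cref{eq3}.

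Next I would glue the slices. On $M:=\bigoplus_{l,n\in\mb{Z}}\mb{H}^*(\mathrm{M}^0(l,n))=\bigoplus_{n}V_n$ take the direct sum of the $\mathrm{Cl}(r)$-actions. For $e$ and $f$ the point is that the summand $\mb{H}^*(\mathrm{M}^0(a,b))$ occurs in the target of $e_n$ only for $n=b$ and in the source of $e_n$ only for $n=b-a$ (because $\mathrm{M}^0(a,b)=\mathrm{M}^0(a,(b-a)+a)$); hence $\{e_n\}_n$ and $\{f_n\}_n$ have pairwise disjoint graded supports and glue to single endomorphisms $e,f$ of $M$, which act by $e\colon \mb{H}^*(\mathrm{M}^0(l+r,n+l+r))\to \mb{H}^*(\mathrm{M}^0(l,n))$ and $f\colon \mb{H}^*(\mathrm{M}^0(l,n))\to \mb{H}^*(\mathrm{M}^0(l+r,n+l+r))$. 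Since every defining relation of $\mc{E}(r)$ involves only $e$, $f$, and the $\mathrm{Cl}(r)$-generators, and holds on each $V_n$ by \cref{prop:4}, it holds on $M$; this yields the $\mc{E}(r)$-action (the action of \cref{eq:act}) and proves the existence parts of \cref{shift1,shift2}.

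It remains to record the degree shifts, which follow by transporting, through the identifications of \cref{thm:NY1}, the grading data of \cref{lem:grading2} (in its weight form, for the dual representation, under which $p_i,q_i$ shift the cohomological degree by $+i$ and $-i$) and the normalisations of \cref{thm:4.4}: writing $m=-l$, the convention $\mb{CH}^{m}(\mathrm{M}^0(l,n))=\mathrm{CH}^{m-\frac12 l(l+1)}(\mathrm{M}^0(l,n))$ is exactly the $\mb{CH}_+$-normalisation, while on the dual side $\mathrm{M}^0(l,n+l)\cong\mathrm{Gr}_{\mc{M}_H(n)}(\mc{U}_o^\vee[1],-l)$ the $\mb{CH}_-$-normalisation carries the extra twist $-mr=lr$, and this twist is exactly what produces the $l$-dependent summands $-rl$ and $r(l+r)$ in the cohomological shifts of $e,f$ in \cref{shift2}; the shifts $-r,r,-1,1$ and $-l,l+r,0,0$ on the first and second gradings come directly from matching quotient ranks and parameters, and the Hodge statement is identical with the bi-grading in place of the cohomological grading. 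I expect the only real obstacle to be clerical: one must keep the derived-Grassmannian argument $m$ aligned with the moduli parameter $l=-m$, track whether the $l$ in a given shift formula is read off the source or the target of the operator, and carry the $\mb{CH}_+/\mb{CH}_-$ twist through; with one convention fixed throughout, each asserted shift is an elementary computation.
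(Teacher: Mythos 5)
Your proposal is correct and follows essentially the same route as the paper: the action is assembled slice-by-slice over $n$ from \cref{thm:4.4} applied to $X=\mc{M}_H(n)$, $\mc{E}=\mc{U}_o$ via \cref{thm:NY1} and \cref{prop:4}, using the dual representation of \cref{rem:dualrep}, and the degree shifts are read off from \cref{lem:grading2} together with the $\mb{CH}_\pm$ normalisations (with $l$ in the shift formulas referring to the source of the operator). The paper's own proof is just a two-sentence pointer to \cref{rem:dualrep} and \cref{lem:grading2}; your write-up supplies the same argument in full detail, including the gluing over $n$, which the paper carries out in the preamble of \cref{sec:666}.
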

\begin{proof}
    We remark that, unlike in \cref{shift1}, here we consider the dual representation in \cref{rem:dualrep}. The degree shift follows from \cref{lem:grading2}.
\end{proof}

\section{The admissible representation of \texorpdfstring{$\mf{C}(r)$}{} and the colimit}
\label{sec:fermion}
In this section, we study the image of $\bigoplus_{l,n}\mb{H}^*(\mathrm{M}^0(l,n))$ under the functor $\mf{H}_{\infty}$.  In particular, we need to prove the following two isomorphisms
\begin{align}
  \label{stable:limit}
    \mf{H}_{\infty}(\bigoplus_{l,n}\mb{H}^*(\mathrm{M}^0(l,n)))\cong \bigoplus \mb{H}^*(\mc{M}_{H_{\infty}}(l,n)), \\
    \nonumber \mf{H}_{\infty}(\bigoplus_{l,n}\mb{H}^*(\mathrm{M}^0(l,n)))\cong  \bigoplus_{n\in \mb{Z}}\mb{H}^*(\mc{M}_{H}(n))\otimes \mf{F}(r),
\end{align}
as $\mathbb{Z}$-modules and $\mf{E}(r)$-representations respectively, where we define the Fermionic Fock space $\mf{F}(r)$ in a way compatible with our geometric correspondences:
\begin{definition}
\label{def:maya}
  An $r$-Maya diagram $\mf{I}$ is defined as a sequence of subsets of $[r]$
\begin{equation*}
 ( \cdots I_{m-1}, I_{m}, I_{m+1},\cdots )\in (2^{[r]})^{\mb{Z}}
\end{equation*}
where $I_{m}\subset [r]$ for all $m\in \mb{Z}$ such that $I_{m}=[r]$ for  $m\ll 0$, and  $I_{m}=\emptyset$ for $m\gg 0$. The \textbf{Fermionic Fock space} $\mf{F}(r)$ is defined as the \textbf{free} $\mathbb{Z}$-module generated by formal variables $v_{\mf{I}}$ for all  $r$-Maya diagrams $\mf{I}$. 
For any $r$-Maya diagram $\mf{I}$, we formally write $v_{\mf{I}}$ as the infinite tensor product  $v_\mf{I}:=\cdots \otimes v_{I_{-1}}\otimes v_{I_{0}}\otimes v_{I_{1}}\otimes \cdots$.
\end{definition}

\subsection{The admissible representations} 
The $\mathrm{Cl}(r)$-representation on $\mathrm{F}(r)$ in \cref{eq:rep1} induces a natural $\mf{E}(r)$-representation on $\mf{F}(r)$ where $Ev_{\mf{I}}:=v_{\mf{I}^-}$, $Fv_{\mf{I}}:=v_{\mf{I}^+}$
such that the $i$-th term of $\mf{I}^{\pm}$ is defined as the $(i\mp 1)$-th term of $\mf{I}$ and 
\begin{align*}
  P_{a,i}v_{\mf{I}}:= (-1)^{\sum_{k>a}\mathrm{len}(I_k)}\cdots \otimes v_{I_{a-1}}\otimes (p_{i}v_{I_{a}}) \otimes v_{I_{a+1}} \cdots, \\
  Q_{a,i}v_{\mf{I}}:= (-1)^{\sum_{k>a}\mathrm{len}(I_k)}\cdots \otimes v_{I_{a-1}}\otimes (q_{i}v_{I_{a}}) \otimes v_{I_{a+1}} \cdots.
\end{align*} 
 Moreover, given an automorphism $T$ on a $\mathbb{Z}$-module $W$,  $\mf{F}(W,T) := W\otimes \mf{F}(r)$ has a canonical $\mathfrak{C}(r)$-action by
\begin{align*}
  E(w\otimes v_{\mf{I}}):=T(w)\otimes v_{\mf{I}^{-}}, \quad F(w\otimes v_{\mf{I}}):=T^{-1}(w)\otimes v_{\mf{I}^+}, \\
  P_{l,i}(w\otimes v_{\mf{I}}):=w\otimes P_{l,i}v_{\mf{I}}, \quad Q_{l,i}(w\otimes v_{\mf{I}}):=w\otimes Q_{l,i}v_{\mf{I}}.
\end{align*}
 We prove that any admissible $\mf{C}(r)$-representation is of the form $\mf{F}(W,T)$.
\begin{proposition}
  \label{thm:equivalence}Given $V\in \mathrm{Mod}_{\infty}$, then the operators 
  \begin{equation*}
     E_{[r]}:=Q_{0,[r]}E \text{ and } F_{[r]}:=P_{1,[r]}F=FP_{0,[r]}
  \end{equation*}
are both closed on $W:=\mc{H}_{0}(V)\cap \mc{K}_{0}(V)$, and their restrictions to $W$ are inverses of each other. Moreover, let $T:=E_{[r]}|_W$; there is a canonical isomorphism of $\mf{C}(r)$-representations $\mf{F}(W,T)\cong \mc{K}_{-\infty}(V)\cap\mc{H}_{\infty}(V)$ given by
      \begin{align}
               \label{admissible}
          w\otimes v_\mf{I}\to  \lambda_{\mf{I}}\cdots Q_{2,I_2}Q_{1,I_1}P_{0,[r]-I_{0}} P_{-1,[r]-I_{-1}}\cdots w
      \end{align}
where $\lambda_{\mf{I}}:=(-1)^{(\sum_{k=-\infty}^0kr(r-\mathrm{len}(I_k)))}$ is a sign in $\{\pm 1\}$.
\end{proposition}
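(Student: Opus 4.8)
The plan is to split the statement into its two halves — the assertions about $E_{[r]}$ and $F_{[r]}$, and the construction of the isomorphism — and to establish the second half by identifying $\mc{H}_\infty(V)\cap\mc{K}_{-\infty}(V)$ with $\mf{F}(W,T)$ slot by slot. The closedness of $E_{[r]}=Q_{0,[r]}E$ and $F_{[r]}=FP_{0,[r]}$ on $W:=\mc{H}_0(V)\cap\mc{K}_0(V)$ is a direct manipulation with the relations of $\mf{C}(r)$: for $w\in W$ and $a>0$ one slides $P_{a,i}$ rightwards past $Q_{0,[r]}$ (anticommuting, as $a\neq 0$) and past $E$ via $EP_{a,i}=P_{a-1,i}E$, landing on $P_{a',i}w=0$ with $a'>0$; for $a\leq 0$ one either meets $Q_{0,i}Q_{0,[r]}=0$ or slides through to $Q_{a',i}w=0$ with $a'\leq 0$; and symmetrically for $F_{[r]}$. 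That $E_{[r]}|_W$ and $F_{[r]}|_W$ are mutually inverse then follows from $EF=FE=1$ together with the slot-$0$ instances of the finite Clifford identities of \cref{lem:45}: $E_{[r]}F_{[r]}=Q_{0,[r]}P_{0,[r]}$ restricts to the identity on $W\subset\bigcap_i\ker Q_{0,i}$, while $F_{[r]}E_{[r]}=FP_{0,[r]}Q_{0,[r]}E$ equals $FE=\operatorname{id}$ on $W$ because $Ew\in\bigcap_i\ker P_{0,i}$ (as $P_{0,i}Ew=EP_{1,i}w=0$). This produces $T$.

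Next, for integers $m\leq 0\leq m'$ put $V_{m,m'}:=\mc{H}_{m'}(V)\cap\mc{K}_m(V)$, so that $\mc{H}_\infty(V)\cap\mc{K}_{-\infty}(V)=\bigcup_{m\leq 0\leq m'}V_{m,m'}$. I would prove, peeling one slot at a time, that $V_{m,m'}$ is free over $W$ with basis the elements (up to sign) $Q_{m',I_{m'}}\cdots Q_{1,I_1}\,P_{0,[r]-I_0}\cdots P_{m+1,[r]-I_{m+1}}\,w$, indexed by tuples $(I_{m+1},\dots,I_{m'})$ of subsets of $[r]$ and $w$ running over a $\mb{Z}$-basis of $W$. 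The peeling step at a slot $a$ with $0<a\leq m'$ uses the resolution of the identity $1=\sum_{I\subset[r]}Q_{a,I}\,P_{a,[r]}Q_{a,[r]}P_{a,I}$ coming from the slot-$a$ copy of \cref{cor:faith}: one checks that $P_{a,[r]}Q_{a,[r]}P_{a,I}v\in\mc{H}_{a-1}(V)$ (using $P_{a,i}P_{a,[r]}=0$ and anticommutation with higher slots) and that distinct $I$ contribute to distinct slot-$a$ isotypic pieces, so $V\cap\mc{H}_a\cong(V\cap\mc{H}_{a-1})\otimes\mathrm{F}(r)$; for $m<a\leq 0$ one peels from below with the $p\leftrightarrow q$-dual identity $1=\sum_I P_{a,I}\,Q_{a,[r]}P_{a,[r]}Q_{a,I}$, the operator $P_{a,[r]-I}$ cutting the ``full'' slot $v_{[r]}$ down to $v_I$. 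Since the subalgebras at different slots anticommute, these peelings are mutually independent up to sign, which also identifies the inclusion $V_{m,m'}\hookrightarrow V_{m-1,m'+1}$ on bases with appending $I_{m'+1}=\emptyset$ and $I_m=[r]$. Taking the colimit over $m\to-\infty$, $m'\to+\infty$ now yields a $\mb{Z}$-module isomorphism $\mf{F}(W,T)\xrightarrow{\ \sim\ }\mc{H}_\infty(V)\cap\mc{K}_{-\infty}(V)$ sending $w\otimes v_{\mf{I}}$ to the element of \cref{admissible}, the sign $\lambda_{\mf{I}}$ being precisely the accumulated reordering sign plus the boundary contributions of the finitely many non-vacuum slots.

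Finally, one checks this isomorphism is $\mf{C}(r)$-equivariant on generators. For $P_{a,i}$ and $Q_{a,i}$ this is slot-local: sliding the operator rightwards through $\xi(w,\mf{I})$ reproduces the Koszul sign $(-1)^{\sum_{k>a}\mathrm{len}(I_k)}$ of the action in \cref{def:maya}, and at slot $a$ one gets either $\xi(w,\mf{I}')$ with $I_a$ replaced by $I_a\mp\{i\}$, or zero — the latter from $P_{a,i}^2=Q_{a,i}^2=0$, or because the surviving term carries an operator past the whole tail onto $w\in W$ where it is killed. For $E$ (and $F=E^{-1}$) the slot labels shift, and $E\,\xi(w,\mf{I})$ matches $\xi(Tw,\mf{I}^-)$ at every slot except slot $0$, where the shift yields $Q_{0,I_1}$ rather than the $P_{0,[r]-I_1}$ appearing in $\xi(Tw,\mf{I}^-)$; these are reconciled by writing $Tw=Q_{0,[r]}Ew$, sliding $Q_{0,[r]}$ leftwards past the slot-$\leq-1$ operators, and using $P_{0,[r]-I_1}Q_{0,[r]}=(\pm)\,Q_{0,I_1}$ on $\bigcap_i\ker P_{0,i}$ (again from \cref{lem:45}). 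The residual sign is absorbed into $\lambda_{\mf{I}^-}/\lambda_{\mf{I}}=(-1)^{r\sum_{k\leq 0}(r-\mathrm{len}(I_k))}$, which is exactly why $\lambda_{\mf{I}}$ carries the factor $r\,(r-\mathrm{len}(I_k))$.

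The main obstacle throughout is this sign bookkeeping, concentrated in the last step and, to a lesser extent, in the compatibility of the slot-peelings: the Clifford generators at distinct slots anticommute only up to sign, so every slide accrues a sign, and the real content of the proof is verifying that the global sign $\lambda_{\mf{I}}$, together with the Koszul signs built into the action on $\mf{F}(r)$, make all of these cancel — first at the ``transition slot'' $0$, where $Q$'s meet $P$'s and the twist $T$ enters. Once the signs are pinned down, everything else reduces to the finite-dimensional Clifford identities of \cref{sec:444} applied one slot at a time.
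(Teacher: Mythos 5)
Your proposal is correct and follows essentially the same route as the paper: your slot-by-slot peeling is the paper's iterated application of \cref{lem:irreducible}, i.e.\ the isomorphisms $\mc{H}_{a-1}(V)\otimes\mathrm{F}(r)\cong\mc{H}_a(V)$ and $\mc{K}_{a}(V)\otimes\mathrm{F}(r)\cong\mc{K}_{a-1}(V)$, and your slot-$0$ reconciliation via $Tw=Q_{0,[r]}Ew$ is exactly the paper's identity $E|_W=(P_{0,[r]}T)|_W$ used in the reverse direction. The closedness/invertibility computation and the reduction of the $E,F$-equivariance to a single transition slot (with the sign bookkeeping left to direct computation) likewise coincide with the paper's argument.
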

\begin{proof}
 For any $w\in \mc{K}_0(V)$, $a\leq 0$ and $i\in [r]$, we have 
  \begin{align*}
     &Q_{a,i}Q_{0,[r]}Ew=
    \begin{cases}
     0 & a=0, \\
(-1)^rQ_{0,[r]}EQ_{a+1,i}w=0 &a<0, 
    \end{cases}
    \\ &  Q_{a,[i]}P_{1,[r]}Fw=(-1)^rP_{1,[r]}FQ_{a-1,i}w=0.
  \end{align*}
  and hence $\mc{K}_0(V)$ is closed under $E_{[r]}=Q_{0,[r]}E$ and $F_{[r]}=P_{1,[r]}F$. Similarly, $\mc{H}_0(V)$ is also closed under  $E_{[r]}$ and $F_{[r]}$. Moreover, by \cref{lem:45} we have
  \begin{equation*}
    (Q_{0,[r]}P_{0,[r]})|_W=\operatorname{Id}_W=(P_{1,[r]}Q_{1,[r]})|_W
  \end{equation*}
 and hence $E_{[r]}$ and $F_{[r]}$ are inverses of each other on $W$.

    By \cref{eq:clifcong}, for any integer $a$, the homomorphisms
  \begin{gather*}
    \mc{H}_{a-1}(V)\otimes \mathrm{F}(r)\to \mc{H}_a(V), \quad w\otimes v_{I}\to Q_{a,I}w, \\
    \mc{K}_{a}(V)\otimes \mathrm{F}(r)\to \mc{K}_{a-1}(V), \quad w\otimes v_{I}\to P_{a,[r]-I}w
  \end{gather*}
  are isomorphisms of $\mathrm{Cl}(r)$-representations, where the action on the right sides is given by the action of $P_{a,i}$ and $Q_{a,i}$. Thus by induction, \cref{admissible} is an isomorphism of $\mb{Z}$-modules and the operators $P_{a,i},Q_{b,j}$ for all $i,j\in [r],a,b\in\mb{Z}$ are compatible with \cref{admissible}. We only need to check that the action of the shifting operators $E$ and $F$ is compatible with \cref{admissible}. It follows from direct computation and that over $W$ we have $E|_{W}=(P_{0,[r]}T)|_{W}$ and $F|_W=(Q_{1,[r]}T^{-1})|_W$.
  \end{proof}

\begin{remark}
    Like \cref{prop:bimodule}, one can also show an equivalence between the abelian category of $\mathbb{Z}[t,t^{-1}]$-modules and the abelian category of admissible $\mf{C}(r)$-representations. The point is that the module $\mf{F}(r)[t,t^{-1}]$ is the module generated by a vacuum vector $|0\rangle$ with the relation $P_{a,i}|0\rangle=0$ for all $a>0,i\in [r]$ and $Q_{b,i}|0\rangle=0$ for all $b\leq 0,i\in [r]$. Moreover, $\mf{F}(r)[t,t^{-1}]$  is a small projective generator of the abelian category of admissible $\mf{C}(r)$-representations.
\end{remark}

 \subsection{Bounded bigraded \texorpdfstring{$\mc{E}(r)$}{}-representations and the parabolic colimit}
 \label{sec:boundedrep}
 For a $r$-Maya diagram $\mf{I}$, we define the \textbf{central charge} and the \textbf{weight} as 
\begin{align*}
  c_{\mf{I}}:=\sum_{l\in \mb{Z}_{>0}}\mathrm{len}(I_l)-\sum_{l\in \mb{Z}_{\leq 0}}(r-\mathrm{len}(I_l)), \\ 
  w_{r,\mf{I}}:=\sum_{l\in \mb{Z}_{>0}}l\cdot \mathrm{len}(I_l)-\sum_{l\in \mb{Z}_{\leq 0}}l(r-\mathrm{len}(I_l)).
\end{align*}
They induce a bigrading on $\mf{F}(r)$ by
\begin{equation*}
  \mf{F}(r)=\bigoplus_{l,m\in \mb{Z}}\mf{F}(r)_{l,m}, \quad \mf{F}(r)_{l,m}=\langle v_{\mf{I}}|c_\mf{I}=l, w_{r,\mf{I}}=m\rangle.
\end{equation*}

 Let $\mf{F}(r)^{\leq 0}:= \mc{H}_0(\mf{F}(r))$, which is the free $\mb{Z}$-module generated by all Maya diagrams $\mf{I}$ such that $I_{m}=\emptyset$ for all $m>0$. There is a canonical bigraded $\mc{E}(r)$-action on $\mf{F}(r)^{\le 0}$ induced from the functor $\mc{H}_0$. By direct computation, the operators $p_{0,i},q_{0,i},e,f, e_{[r]},f_{[r]}$ shift the central charge degree and the weight degree on $\mf{F}(r)^{\leq 0}_{l,\bullet} = \bigoplus_{m \in \mathbb{Z}} \mf{F}(r)_{l,m}^{\le 0}$, respectively, by
  \begin{gather}
    \label{lem:grading}  
    -1,1,-r,r,0,0, \text{ and }  0,0,-l,l+r,l,-l,
  \end{gather} 
which is similar to the degree shift in \cref{shift1} and \cref{shift2}. Moreover, for any $l>0$, we have $\mathrm{G}_0(\mathrm{M}^0(l,n))=0$ for all $n\in \mb{Z}$ by \cref{thm:NY1} and $\mf{F}(r)_{l,n}^{\leq 0}=0$ by the definition of central charge. Given $n\ll 0$, by the Bogomolov--Gieseker inequality (and also \cref{thm:NY1}) and the definition of the weight, we have $\mathrm{G}_0(\mathrm{M}^0(l,n))=0$ and $\mf{F}(r)_{l,n}^{\leq 0}=0$ for all $l\in \mb{Z}$. We summarize the above similarities to give the definition of a bounded bigraded $\mc{E}(r)$-representation:
\begin{definition}
\label{def:bounded.bigraded}
Given $V\in \mathrm{Mod}_0$, we say that $V$ is \textbf{bounded and bigraded} if there exists a bigrading by two degrees, which we call the central charge and weight degrees, respectively,
  $$V\cong \bigoplus_{l,n\in \mb{Z}}V_{l,n},$$
   such that there exists $n_0\in \mb{Z}$ such that $V_{l,n}=0$ holds for all $l>0$ or $n<n_0$, and on $V_{l,n}$ the operators $p_{0,i},q_{0,i},e,f$ shift the central charge and the weight degree by the formula \cref{lem:grading}. If $V$ is bounded and bigraded, the bigrading on $t^mV$ for all $m\in \mb{Z}$ by 
    $t^mV_{l,n}:=V_{l-mr,n-ml+\frac{m(m-1)}{2}r}$ induces a bigrading on $\mf{H}_{\infty}(V)$ by \cref{cor:equivalence1}.
\end{definition}

Given a bounded bigraded $\mc{E}(r)$-representation $V$, the operator $E,F,P_{0,i},Q_{0,i}$ shift the central charge degree and weight degree by $-r,r,-1,1$ and $-l,l+r,0,0$ on $\mf{H}_{\infty}(V)_{l,n}$ respectively. 
\begin{proposition}
  \label{prop:big}
  For any bounded and bigraded $\mc{E}(r)$-representation $V$, 
  we have isomorphisms of bigraded $\mc{E}(r)$ and $\mf{E}(r)$ representations 
    \begin{equation}
    \label{eq:standard}
    V\cong V_{0,\bullet}\otimes \mf{F}(r)^{\leq 0}, \quad \mf{H}_{\infty}(V)\cong V_{0,\bullet}\otimes \mf{F}(r),
  \end{equation}
  where $V_{0,\bullet}:=\bigoplus_{n\in \mb{Z}}V_{0,n}$. Moreover, for any $l,n\in \mb{Z}$, we have the parabolic property
    \begin{equation}
    \label{eq:74}
    \mf{H}_{\infty}(V)_{l,n}\cong V_{l-rm,n-ml+\frac{1}{2}(m-1)mr} \text{ if } m\gg 0.
  \end{equation}
\end{proposition}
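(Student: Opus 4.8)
The plan is to prove the second isomorphism of \cref{eq:standard} first (via the structure theorem \cref{thm:equivalence} for admissible representations), then to deduce the first isomorphism by applying $\mc{H}_0$, and finally to read off the parabolic property \cref{eq:74} from the colimit description of $\mf{H}_\infty$. The conceptual point is that boundedness of $V$ forces $\mf{H}_\infty(V)$ to be an \emph{admissible} $\mf{E}(r)$-representation, so \cref{thm:equivalence} applies to it.

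First I would check that $\mf{H}_\infty(V)$ is admissible. Right-admissibility is automatic, since $\mf{H}_\infty$ is one half of the Morita equivalence of \cref{prop:bimodule}, hence lands in $\mathrm{Mod}_\infty^R$. For left-admissibility I would use the colimit $\mf{H}_\infty(V)=\varinjlim_{a\to+\infty}(t^aV)$ of \cref{cor:equivalence1}: an element $x$ lies in some $t^{a_0}V$, and for $a\leq a_0$ one has $Q_{a,i}x = t^{a_0}\cdot q_{a-a_0,i}v$ for the corresponding $v\in V$; because $q_{c,i}$ shifts the weight by $c$ (the degree conventions of \cref{lem:grading} and \cref{def:bounded.bigraded}) and $V$ vanishes in weight $<n_0$, we get $q_{a-a_0,i}v=0$ once $a\ll 0$. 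Thus $x\in\mc{K}_m(\mf{H}_\infty(V))$ for some $m\ll 0$, so $\mc{K}_{-\infty}(\mf{H}_\infty(V))=\mf{H}_\infty(V)$.

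Now \cref{thm:equivalence} gives $\mf{H}_\infty(V)\cong\mf{F}(W,T)$ with $W=\mc{H}_0(\mf{H}_\infty(V))\cap\mc{K}_0(\mf{H}_\infty(V))$ and $T=E_{[r]}|_W$, and I would identify $(W,T)$ with $(V_{0,\bullet},\operatorname{id})$. First $\mc{H}_0(\mf{H}_\infty(V))\cong V$: this is the identity $\mc{H}_0\circ\mf{H}_\infty\cong\operatorname{Id}$ from Morita theory, and concretely the relation $p_{0,i}e=0$ shows $\mc{H}_0(\mf{H}_\infty(V))=t^0V$ inside $\varinjlim t^aV$. Under this identification $W=\{v\in V: Q_{a,i}(t^0v)=0 \text{ for all } a\leq 0\}$, which contains $V_{0,\bullet}$ because $Q_{a,i}$ raises the central charge by $1$ while $V_{l,\bullet}=0$ for $l>0$. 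Conversely $W$ is graded, and on $W\cap V_{l,\bullet}$ the operators $E_{[r]}=Q_{0,[r]}E$ and $F_{[r]}$ act as mutually inverse automorphisms shifting the weight by $+l$ and $-l$ respectively (again \cref{lem:grading}); an automorphism strictly lowering the weight cannot exist on a nonzero $\mb{Z}$-module bounded below in weight, so $W\cap V_{l,\bullet}=0$ for $l\neq 0$ and $W=V_{0,\bullet}$. Finally $T=E_{[r]}|_{V_{0,\bullet}}=(q_{0,[r]}e)|_{V_{0,\bullet}}$, which a direct computation with the Clifford relations (using $fe=1$, $ef=p_{0,[r]}q_{0,[r]}$ and \cref{lem:45}) shows to be the identity. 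This proves $\mf{H}_\infty(V)\cong V_{0,\bullet}\otimes\mf{F}(r)$; compatibility with both gradings is the bookkeeping of \cref{lem:grading} against the definitions of central charge and weight in \cref{sec:boundedrep}. Then the first isomorphism follows by applying $\mc{H}_0$: since $\mf{E}(r)$ acts on $\mf{F}(r)$ monomially on the basis of Maya diagrams, each $\ker(P_{a,i})$ is a coordinate subspace and $\mc{H}_0$ commutes with $-\otimes_{\mb{Z}}V_{0,\bullet}$, so $\mc{H}_0(V_{0,\bullet}\otimes\mf{F}(r))=V_{0,\bullet}\otimes\mc{H}_0(\mf{F}(r))=V_{0,\bullet}\otimes\mf{F}(r)^{\leq 0}$, while $\mc{H}_0(\mf{H}_\infty(V))\cong V$. (Alternatively, the first isomorphism can be proved directly: $p_{0,i}e=0$, $fe=1$, $ef=p_{0,[r]}q_{0,[r]}$ and \cref{lem:45} make $e$ restrict to an isomorphism $V\xrightarrow{\sim}\bigcap_i\ker(p_{0,i})$, which with \cref{lem:irreducible} gives a canonical $V\otimes\mathrm{F}(r)\xrightarrow{\sim}V$; iterating, the central charge of the surviving $V$-factor is nondecreasing and $\leq 0$, and once it stabilizes below $0$ the remaining operators strictly lower the weight, so passing to the limit yields $V\cong V_{0,\bullet}\otimes\mf{F}(r)^{\leq 0}$.)

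For \cref{eq:74}, the colimit description gives $\mf{H}_\infty(V)_{l,n}=\varinjlim_a V_{l-ar,\,n-al+\frac12 a(a-1)r}$ with transition maps $e$, which are injective since $fe=1$; it remains to see they are isomorphisms for $a\gg 0$. Using the first isomorphism, the transition map is $E|_{\mf{F}(r)^{\leq 0}}$ on the Fock factor, whose image on a fixed central-charge component is spanned by the Maya diagrams with empty $0$-slot; since the minimal weight in central charge $-d$ grows like $d^2/2r$ while the weights relevant to a fixed bidegree $(l,n)$ of $\mf{H}_\infty(V)$ lie within $O(d)$ of this minimum (because $V_{0,\bullet}$ is bounded below in weight), the transition maps are bijective once $a$ is large, and the stabilized value is $V_{l-mr,\,n-ml+\frac12 m(m-1)r}$ for $m\gg 0$. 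The main obstacle I anticipate is twofold: verifying $T=\operatorname{id}_{V_{0,\bullet}}$ (a careful sign/Clifford computation) and making the weight-versus-central-charge estimate in the last step precise, i.e.\ checking that the finite window of weights attached to a fixed $(l,n)$ indeed sits below the threshold at which central-charge $-d$ diagrams are forced into slots $\leq -1$.
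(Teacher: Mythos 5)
Your proposal follows essentially the same route as the paper: prove that $\mf{H}_{\infty}(V)$ is admissible (right-admissibility from the Morita setup, left-admissibility from the colimit description plus the fact that $q_{a,i}$ shifts the weight by $a$ and $V$ is bounded below in weight — your sign here is the correct one), apply \cref{thm:equivalence} to get $\mf{H}_{\infty}(V)\cong \mf{F}(W,T)$, identify $W=(\mc{H}_0\cap\mc{K}_0)(\mf{H}_{\infty}(V))$ with $V_{0,\bullet}$, recover the first isomorphism of \cref{eq:standard} by applying $\mc{H}_0$, and verify \cref{eq:74} by stabilization on Maya diagrams. Your argument for $W\cap V_{l,\bullet}=0$ when $l\neq 0$ (a homogeneous automorphism of nonzero degree cannot exist on a graded module bounded below in weight) is the same mechanism as the paper's iteration $v=e_{[r]}^m f_{[r]}^m v$ combined with the nilpotence of $f_{[r]}$ on $V_{l,\bullet}$ for $l<0$; your quantitative weight estimate for \cref{eq:74} is a spelled-out version of the paper's ``direct computation on the Maya diagrams.''

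The one step that will not go through as announced is the claim that $T=E_{[r]}|_{V_{0,\bullet}}=(q_{[r]}e)|_{V_{0,\bullet}}$ is the identity. The relations $fe=1$, $ef=p_{[r]}q_{[r]}$ and \cref{lem:45} only show that $T$ is an automorphism of $V_{0,\bullet}$ with inverse $f_{[r]}|_{V_{0,\bullet}}=(fp_{[r]})|_{V_{0,\bullet}}$; they cannot force $T=\operatorname{id}$, because the assignment $e\mapsto -e$, $f\mapsto -f$, $p_i\mapsto p_i$, $q_i\mapsto q_i$ is an automorphism of $\mc{E}(r)$ preserving all defining relations and all degree shifts, so twisting any bounded bigraded $V$ by it produces another bounded bigraded representation on which $T$ changes sign. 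Thus $T$ is a genuine invariant of $V$, and the promised ``direct Clifford computation'' does not exist in this generality. This does not affect the proposition as it is used downstream: the operators $P_{a,i},Q_{a,i}$ act on $\mf{F}(W,T)$ through the $\mf{F}(r)$-factor alone, independently of $T$, so the induced $\mathrm{Cl}$- and $\hat{\gl}_r$-actions in \cref{prop:limit} and \cref{main1} are unchanged; but the second isomorphism in \cref{eq:standard} should be stated (and proved) as $\mf{H}_{\infty}(V)\cong \mf{F}(V_{0,\bullet},T)$ rather than with the trivial shift operator on the $V_{0,\bullet}$-factor.
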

\begin{proof}  By \cref{thm:equivalence}, the key point is proving that $\mf{H}_{\infty}(V)$ is admissible and 
    \begin{equation}
    \label{eq:73} 
    (\mc{H}_0\cap \mc{K}_0)(\mf{H}_{\infty}(V))=V_{0,\bullet}.
  \end{equation}
We first prove the admissibility of $\mf{H}_{\infty}(V)$. Any $w\in \mf{H}_{\infty}(V)$ can be written as $w=t^m v$ for some $m\in \mb{Z}$ and $v\in V_{l,n}$ for some integers $l,n$. When $a\leq m$, by \cref{def:plqi} we have $Q_{a,b}t^m v=t^m q_{a-m,b}v$. By \cref{def:ind} and the induction, $q_{a-m,b}v\in V_{l+1,n-a+m}$, which is $0$ when $a\ll 0$ due to the boundedness property of $V$. Hence $\mf{H}_{\infty}(V)$ is admissible. Next, we prove \cref{eq:73}. We notice that 
\begin{equation*}
    (\mc{H}_0\cap \mc{K}_0)(\mf{H}_{\infty}(V))=\{x\in V \mid q_{a,i}x=0 \text{ for all } a\leq 0, i\in [r]\} 
\end{equation*}
where $q_{a,i}$ is defined in \cref{def:plqi}.  It obviously contains $V_{0,\bullet}$. On the other hand, we notice that $(\mc{H}_0\cap \mc{K}_0)(\mf{H}_{\infty}(V))$ is closed under $f_{[r]}$ and by \cref{lem:45} and \cref{eq3}
\begin{equation*}
  e_{[r]}f_{[r]}|_{(\mc{H}_0\cap \mc{K}_0)(\mf{H}_{\infty}(V))}=(q_{[r]}p_{[r]})^2|_{(\mc{H}_0\cap \mc{K}_0)(\mf{H}_{\infty}(V))}=id.
\end{equation*}
Hence for any $v\in (\mc{H}_0\cap \mc{K}_0)(\mf{H}_{\infty}(V))$, we have
\begin{equation*}
  v=e_{[r]}f_{[r]}v=e_{[r]}^2f_{[r]}^2v=\cdots =e_{[r]}^m f_{[r]}^m v\cdots 
\end{equation*}
and thus $v\in V_{0,\bullet}$, as the action of $f_{[r]}$ is nilpotent on $V_{l,n}$ for any $l<0$.

 The isomorphisms \cref{eq:standard} follow from \cref{eq:73} and \cref{thm:equivalence}. To prove \cref{eq:74}, by \cref{eq:standard} we can assume that $V\cong \mf{F}(r)^{\leq 0}$. What we need to prove is that for any integers $l,n$, when $m\gg 0$,
  \begin{equation*}
    \mf{F}(r)^{\leq 0}_{l-rm,n-ml+\frac{1}{2}(m-1)mr}= \mf{F}(r)_{l-rm,n-ml+\frac{1}{2}(m-1)mr},
  \end{equation*}
Let $\mf{F}(r)^{\leq m}:=\mc{H}_{m}(\mf{F}(r))$. Then under the action of $E^m$, we have
\begin{align*}
  \mf{F}(r)_{l,n}\cong \mf{F}(r)_{l-rm,n-ml+\frac{1}{2}(m-1)mr},\quad 
  \mf{F}(r)^{\leq m}_{l,n} \cong \mf{F}(r)_{l-rm,n-ml+\frac{1}{2}(m-1)mr}^{\leq 0}.
\end{align*}
Hence we only need to prove that for any integers $l,n$, $\mf{F}^{\leq m}(r)_{l,n}=\mf{F}(r)_{l,n}$ when $m\gg 0$.
It follows from direct computation on the Maya diagrams. 
\end{proof}

\begin{remark}
  More information about the bounded bigraded $\mc{E}(r)$-representations can be found in \cref{sec:visualization}, where we provide a visualization for most of the morphisms introduced in this subsection and how the parabolic trajectories appear in the action of $e$. 
\end{remark}

\subsection{The verification of \texorpdfstring{\cref{stable:limit}}{}}
Now we verify \cref{stable:limit} and its cohomological variants. Due to the rigidity of \cref{prop:big}, we can prove much stronger statements.
\begin{proposition}
\label{prop:limit}
    We have an isomorphism of $\mc{E}(r)$-representations
    \begin{equation}
      \label{align1}
      \bigoplus_{l,n\in \mb{Z}}\mathrm{G}_0(\mathrm{M}^{0}(l,n))\cong \bigoplus_{n\in \mb{Z}}\mathrm{G}_0(\mc{M}_{H}(n))\otimes \mf{F}(r)^{\leq 0}.
    \end{equation}
    Moreover, under the functor $\mf{H}_{\infty}$, we have an isomorphism of $\mf{C}(r)$-representations
\begin{align}
  \label{align2}  \bigoplus_{l,n\in \mb{Z}}\mathrm{G}_0(\mc{M}_{H_{\infty}}(l,n))\cong \bigoplus_{n\in \mb{Z}}\mathrm{G}_0(\mc{M}_H(n))\otimes \mf{F}(r).
\end{align}
The isomorphisms of representations are bigraded, where the grading $l$ is compatible with the central charge grading, and the grading $n$ is compatible with the weight grading. A similar argument holds for Hochschild homology if we assume \cref{assumptionS}; in this case $\mf{F}(r)$ has homological degree $0$.
\end{proposition}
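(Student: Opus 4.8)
The plan is to recognize $V:=\bigoplus_{l,n\in\mb{Z}}\mathrm{G}_0\big(\mathrm{M}^0(l,n)\big)$ as a bounded bigraded $\mc{E}(r)$-representation in the sense of \cref{def:bounded.bigraded}, and then to read off all four assertions from \cref{prop:big}.

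First I would equip $V$ with the $\mc{E}(r)$-action and the two gradings (by $l$ and by $n$) furnished by \cref{shift1}, declaring $l$ to be the central charge and $n$ the weight. The degree shifts $-r,r,-1,1$ and $-l,l+r,0,0$ recorded there are exactly those demanded in \cref{def:bounded.bigraded}, so the only point needing verification is the boundedness: one must produce an integer $n_0$ with $\mathrm{G}_0(\mathrm{M}^0(l,n))=0$ whenever $l>0$ or $n<n_0$. Vanishing for $l>0$ is automatic, since $\mathrm{M}^0(l,n)\cong \mathrm{Gr}_{\mc{M}_H(n)}(\mc{U}_o,-l)$ is then a Grassmannian of quotients of negative rank, hence empty. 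For vanishing at $n\ll 0$ one invokes the Bogomolov--Gieseker inequality on $\hat S$, which forces the second Chern class $n-\frac{l(l+1)}{2}$ of a sheaf parametrized by $\mathrm{M}^0(l,n)$ to be bounded below; the key observation is that the resulting lower bound for $n$ depends quadratically on $l$ but is itself bounded below as $l$ ranges over $\mb{Z}$, so a single $n_0$ works. (This is the bound recalled just before \cref{def:bounded.bigraded}.) This verification is the step I would handle with the most care.

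Granting that $V$ is bounded and bigraded, I would apply \cref{prop:big} to it. Its first isomorphism $V\cong V_{0,\bullet}\otimes \mf{F}(r)^{\leq 0}$ gives \cref{align1}, once we observe $V_{0,\bullet}=\bigoplus_{n}\mathrm{G}_0(\mathrm{M}^0(0,n))=\bigoplus_{n}\mathrm{G}_0(\mc{M}_H(n))$ via the identification $\mathrm{M}^0(0,n)\cong \mc{M}_H(n)$ following \cref{thm:NY1}. For \cref{align2}, the second isomorphism of \cref{prop:big} gives $\mf{H}_{\infty}(V)\cong V_{0,\bullet}\otimes \mf{F}(r)$ as $\mf{E}(r)$-representations, while its parabolic property \cref{eq:74} identifies $\mf{H}_{\infty}(V)_{l,n}\cong \mathrm{G}_0\big(\mathrm{M}^0(l-rm,\,n-ml+\frac{m(m-1)}{2}r)\big)$ for $m\gg 0$, which by \cref{thm:NY2} equals $\mathrm{G}_0(\mc{M}_{H_{\infty}}(l,n))$; summing over $l,n$ yields \cref{align2}. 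The compatibility of the grading $l$ with the central charge and of $n$ with the weight is automatic, since \cref{shift1} and \cref{prop:big} use the same shift conventions.

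For the Hochschild homology version, under \cref{assumptionS} each $\mathrm{M}^0(l,n)$ is smooth projective by \cref{thm:smooth}, so the second half of \cref{shift1} supplies a triply graded $\mc{E}(r)$-action on $\bigoplus_{l,n}\mathrm{HH}_*(\mathrm{M}^0(l,n))$ with the same central-charge and weight shifts, preserving homological degree. Forgetting the homological grading, the identical boundedness argument makes this a bounded bigraded $\mc{E}(r)$-representation, so \cref{prop:big} applies verbatim; following the homological grading through the construction underlying \cref{prop:big}, which uses only the degree-preserving operators $E,F,P_{a,i},Q_{a,i}$, places $\mf{F}(r)$ in homological degree $0$. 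Apart from the boundedness input, everything is forced by the rigidity packaged in \cref{prop:big}.
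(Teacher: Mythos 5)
Your proposal is correct and follows essentially the same route as the paper, which proves \cref{prop:limit} by combining \cref{shift1} with \cref{prop:big} and deduces \cref{align2} from \cref{thm:NY2} and \cref{eq:74}; the boundedness input you single out is exactly the point the paper settles via the Bogomolov--Gieseker inequality and \cref{thm:NY1} in the discussion preceding \cref{def:bounded.bigraded}. Your quadratic-in-$l$ bound for $n$ is a valid (if slightly more roundabout) substitute for the paper's direct observation that $\mathrm{M}^0(l,n)$ is nonempty only when $\mc{M}_H(n)$ is.
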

\begin{proof}
  It directly follows from \cref{shift1} and \cref{prop:big}.  We only need to remind that \cref{align2} follows from \cref{thm:NY2} and \cref{eq:74}. 
\end{proof}

To formulate the cohomological variants of \cref{prop:limit}, we need to define a cohomological grading on $\mf{F}(r)$ such that  the operators $p_{0,i},q_{0,i},e,f,e_{[r]},f_{[r]}$ on $\mf{F}(r)^{\leq 0}$ shift the cohomological grading by
  \begin{equation*}
  i,-i,-rl+\frac{1}{2}r(r-1),r(l+r)-\frac{1}{2}r(r-1),lr,-lr, 
\end{equation*}
which matches the degree shift in \cref{thm:4.4}.
The grading is given by 
\begin{equation}
\label{coho}
  \mathrm{coho}_{\mf{I}}:=\sum_{m\in \mb{Z}_{\leq 0}}(\frac{r(r-1)}{2}-\mathrm{wt}(I_m))-\sum_{m\in \mb{Z}_{>0}}\mathrm{wt}(I_m)+rw_{r,\mf{I}}.
\end{equation}
for any $r$-Maya diagram $\mf{I}$. The proof of the following argument is similar to \cref{prop:limit}.
\begin{proposition}
\label{prop:limit2}
     Let 
  \begin{align*}
    \mb{CH}^m(\mathrm{M}^0(l,n)):=\mathrm{CH}^{m-\frac{1}{2}(l(l+1))}(\mathrm{M}^0(l,n)), \\ \mb{CH}^m(\mc{M}_{H_{\infty}}(l,n)):=\mathrm{CH}^{m-\frac{1}{2}(l(l+1))}(\mc{M}_{H_{\infty}}(l,n)).
  \end{align*}
  Then there is an isomorphism of bounded triply graded $\mc{E}(r)$-representations on the Chow groups
  \begin{equation*}
    \bigoplus_{l,n\in \mb{Z}}\mb{CH}^*(\mathrm{M}^{0}(l,n))\cong \bigoplus_{n\in \mb{Z}}\mb{CH}^*(\mc{M}_{H}(n))\otimes \mf{F}(r)^{\leq 0}.
  \end{equation*}
  Here, the gradings on $l$ and $n$ are the same as \cref{prop:limit} and the cohomological grading on $\mf{F}(r)$ is induced by \cref{coho}. Under the $\mf{H}_{\infty}$ functor, we have an isomorphism of $\mf{C}(r)$-representations
  \begin{equation*}
    \bigoplus_{n\in \mb{Z}}\mb{CH}^*(\mc{M}_{H}(r,c_1,n))\otimes \mf{F}(r)\cong \bigoplus_{l,n\in \mb{Z}}\mb{CH}^*(\mc{M}_{H_{\infty}}(l,n)).
  \end{equation*}
  A similar argument holds for the Hodge cohomology if we replace the cohomology grading by bigrading.
\end{proposition}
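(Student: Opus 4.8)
The plan is to run the argument of \cref{prop:limit} with $\mathrm{G}_0$ replaced by the Chow groups (and afterwards by Hodge cohomology), while carrying the extra cohomological grading along. Throughout I would work under \cref{assumptionS}, which by \cref{thm:smooth} makes $\mc{U}_o$ a $G$-smooth complex, so that \cref{shift2} applies and furnishes the $\mc{E}(r)$-action on $\bigoplus_{l,n}\mb{CH}^*(\mathrm{M}^0(l,n))$ together with its triple grading by $l$, $n$, and cohomological degree and the explicit degree shifts of the generators $e,f,p_{0,i},q_{0,i}$. The first step is to observe that, after forgetting the cohomological grading, this is a \emph{bounded bigraded} $\mc{E}(r)$-representation in the sense of \cref{def:bounded.bigraded}: the shifts on the $(l,n)$-bigrading are exactly \cref{lem:grading} by \cref{shift2}; the vanishing for $l>0$ holds because $\mathrm{M}^0(l,n)\cong\mathrm{Gr}_{\mc{M}_H(n)}(\mc{U}_o,-l)$ is empty; and a lower bound $n_0$ uniform in $l$ with $\mb{CH}^*(\mathrm{M}^0(l,n))=0$ for $n<n_0$ follows from the Bogomolov--Gieseker inequality together with \cref{thm:NY1}, exactly as recalled in \cref{sec:boundedrep}.

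Granting this, I would apply \cref{prop:big} to obtain the isomorphisms
\[
  \bigoplus_{l,n}\mb{CH}^*(\mathrm{M}^0(l,n))\cong V_{0,\bullet}\otimes\mf{F}(r)^{\le 0},\qquad
  \mf{H}_\infty\!\Big(\bigoplus_{l,n}\mb{CH}^*(\mathrm{M}^0(l,n))\Big)\cong V_{0,\bullet}\otimes\mf{F}(r),
\]
of bigraded $\mc{E}(r)$- and $\mf{E}(r)$-representations respectively, and then identify $V_{0,\bullet}=\bigoplus_n\mb{CH}^*(\mathrm{M}^0(0,n))$ with $\bigoplus_n\mb{CH}^*(\mc{M}_H(n))$ via $\mathrm{M}^0(0,n)\cong\mc{M}_H(n)$ (the normalization shift $\tfrac12 l(l+1)$ vanishes at $l=0$, so $\mb{CH}^*=\mathrm{CH}^*$ there). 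To convert the second display into the asserted statement about $\mc{M}_{H_\infty}$, I would feed the parabolic property \cref{eq:74}, $\mf{H}_\infty(V)_{l,n}\cong V_{l-rm,\,n-ml+\frac12(m-1)mr}$ for $m\gg 0$, into \cref{thm:NY2}, $\mathrm{M}^0\big(l-mr,\,n-ml+\tfrac{m(m-1)}{2}r\big)\cong\mc{M}_{H_\infty}(l,n)$ for $m\gg 0$; the two Chow-degree normalizations by $\tfrac12 l(l+1)$ are chosen precisely so that these identifications are compatible, giving $\mf{H}_\infty\big(\bigoplus\mb{CH}^*(\mathrm{M}^0)\big)_{l,n}\cong\mb{CH}^*(\mc{M}_{H_\infty}(l,n))$ and hence the second displayed isomorphism of the proposition.

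The step I expect to be the real work — though it remains a direct computation — is verifying that the cohomological grading transported through \cref{prop:big} is the one defined by $\mathrm{coho}_{\mf{I}}$ in \cref{coho}. For this I would check by explicit bookkeeping on Maya diagrams that on $\mf{F}(r)^{\le 0}$ the operators $p_{0,i},q_{0,i},e,f,e_{[r]},f_{[r]}$ shift $\mathrm{coho}$ by $i,\,-i,\,-rl+\tfrac12 r(r-1),\,r(l+r)-\tfrac12 r(r-1),\,lr,\,-lr$ respectively, which matches the Chow-degree shifts recorded in \cref{shift2}; since $\mf{F}(r)^{\le 0}$ is generated over $\mc{E}(r)$ from its charge-zero part, where all three degrees are pinned down by the identification $V_{0,\bullet}\cong\bigoplus_n\mb{CH}^*(\mc{M}_H(n))$, this determines the cohomological grading uniquely and forces agreement with \cref{coho}; for $\mf{H}_\infty$ one extends $\mathrm{coho}$ to all $r$-Maya diagrams by the same formula and checks compatibility with $E,F$. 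Finally, the Hodge cohomology assertion follows formally by replacing $\mathrm{CH}^p(-)$ with $H^{p,p}(-,\mb{Q})$ throughout, using that the cycle class map $\mathrm{Hdg}\colon\mathrm{CH}^p\to H^{p,p}$ is compatible with the correspondences defining the $\mc{E}(r)$-action (as already noted in the proof of \cref{thm:4.4}), the only change being that the single cohomological grading is replaced by the Hodge bigrading.
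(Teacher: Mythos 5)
Your proposal is correct and follows essentially the same route as the paper: the paper's proof of this proposition is simply ``similar to \cref{prop:limit},'' i.e.\ combine \cref{shift2} with \cref{prop:big}, use \cref{eq:74} together with \cref{thm:NY2} for the colimit statement, and note that the grading \cref{coho} is defined precisely so that its shifts under $p_{0,i},q_{0,i},e,f,e_{[r]},f_{[r]}$ match those of \cref{shift2}. Your write-up supplies the boundedness check, the identification of $V_{0,\bullet}$, and the grading verification in exactly the way the paper intends, including the Hodge case via compatibility of the cycle class map with the correspondences.
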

\section{The proof of main theorems and the Boson--Fermion correspondence}
\label{sec:main}
Now we state our main theorems. We follow the notations in
\cref{sec:fermion} for the representation theory and \cref{sec:gra} for the moduli space of stable coherent sheaves and assume that $\mathrm{gcd}(r,c_1\cdot H)=1$.  Given $r>0$, 
  the \textbf{affine Lie algebra} $\hat{\gl}_r$ is defined by the generators $e_{ij}^{m}, m\in \mb{Z}, i,j\in [r]$, central element $K$ and a derivation $d$ such that 
\begin{gather}
  \label{eq:affine}
      [e_{ij}^a, e_{kl}^b]=(\delta_{jk}e_{il}^{a+b}-\delta_{il}e_{kj}^{a+b})+a\delta_{il}\delta_{jk}\delta_{a,-b}K,  \\
      \label{eq:derivation}
    [d,  e_{ij}^{l}]=l e_{ij}^{l}, \quad [d,K]=0.
  \end{gather}
\begin{theorem}[\cref{thm:1.1}]
      \label{main1}  We have an isomorphism of bounded bigraded $\hat{\mathrm{gl}}_r$-representations where the bi-grading on $\mf{F}(r)$ is given by the central charge and the weight grading:
\begin{equation}
  \label{eq:main11}
\bigoplus_{l,n\in \mb{Z}}\mathrm{G}_0(\mc{M}_{H_{\infty}}(l,n))\cong \bigoplus_{n\in \mb{Z}}\mathrm{G}_0(\mc{M}_H(n))\otimes \mf{F}(r).
\end{equation}
If \cref{assumptionS} is satisfied, then a similar argument also holds for the Hochschild homology, where $\mf{F}(r)$ always has homological degree $0$. 

Still assuming \cref{assumptionS}, we consider the degree shift 
\begin{equation*}
    \mb{CH}^m(\mc{M}_{H_{\infty}}(l,n)):=\mathrm{CH}^{m-\frac{1}{2}(l(l+1))}(\mc{M}_{H_{\infty}}(l,n)).
\end{equation*}
We have an isomorphism of triply graded $\hat{\mathrm{gl}}_r$-representations
  \begin{equation}
    \label{eq:main2} 
    \bigoplus_{n\in \mb{Z}}\mb{CH}^*(\mc{M}_{H}(r,c_1,n))\otimes \mf{F}(r)\cong \bigoplus_{l,n\in \mb{Z}}\mb{CH}^*(\mc{M}_{H_{\infty}}(l,n))
  \end{equation}
  where the bi-grading on $l,n$ is given by the central charge and the weight grading, and the cohomological grading is given by \cref{coho}. Similar arguments hold for Hodge cohomology as long as we replace the cohomological grading with a bi-grading.
\end{theorem}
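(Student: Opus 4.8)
The plan is to deduce \cref{main1} from \cref{prop:limit} and \cref{prop:limit2} by restriction of scalars along the Boson--Fermion correspondence. Those two propositions already produce the desired isomorphisms as isomorphisms of admissible $\mf{E}(r)$-representations, so all that is left is to pass from a $\mf{E}(r)$-module structure to a $\hat{\mathrm{gl}}_r$-module structure in a canonical way and to track the gradings. The first step is therefore to record the classical Boson--Fermion correspondence (\cite{FRENKEL1981259}; see \cref{thm:fermion-boson}) as an algebra homomorphism from a suitable completion of $U(\hat{\mathrm{gl}}_r)$ into a completion of $\mf{E}(r)$, under which $K$ acts as the identity (level one), $d$ acts as the energy operator, and
\[
e_{ij}^{m} \longmapsto \sum_{a \in \mb{Z}} {:}\,Q_{a,i}\,P_{a-m,j}\,{:}
\]
for an appropriate normal ordering. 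One moreover observes that this action is well defined on every admissible $\mf{E}(r)$-representation, since on such a module the normal-ordered infinite sums are locally finite.

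Granting this, the argument is immediate. As an $\mf{E}(r)$-module, the Fermionic Fock space $\mf{F}(r)$ of \cref{def:maya} restricts along the correspondence to the $\hat{\mathrm{gl}}_r$-module $\bigoplus_{l\in\mb{Z}}\mf{F}(r)_{l,\bullet}$, the direct sum of the irreducible level-one representations considered in the introduction, indexed by the central charge $l$. Hence, in each of $\bigoplus_{n}\mathrm{G}_0(\mc{M}_H(n))\otimes\mf{F}(r)$, $\bigoplus_{n}\mathrm{HH}_*(\mc{M}_H(n))\otimes\mf{F}(r)$ and $\bigoplus_{n}\mb{CH}^*(\mc{M}_H(n))\otimes\mf{F}(r)$ — where $\mf{E}(r)$ acts only through the second tensor factor — we obtain a $\hat{\mathrm{gl}}_r$-action acting trivially on the first factor. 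Since the isomorphisms of \cref{prop:limit} and \cref{prop:limit2} (the isomorphism \cref{align2} together with its Hochschild, Chow and Hodge variants) are $\mf{E}(r)$-linear and $\hat{\mathrm{gl}}_r$ acts through a completion of $\mf{E}(r)$, they are automatically isomorphisms of $\hat{\mathrm{gl}}_r$-representations; this yields \cref{eq:main11}, \cref{eq:main2} and the stated analogues, the boundedness of the bigrading being inherited from \cref{prop:limit}.

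It remains to match the gradings. Each $e_{ij}^{m}$ commutes with the charge operator $\sum_{k\in[r]}e_{kk}^{0}$, as do $K$ and $d$, so the $\hat{\mathrm{gl}}_r$-action preserves the central-charge grading (the grading by $l$), while $[d,e_{ij}^{m}]=m\,e_{ij}^{m}$ forces $e_{ij}^{m}$ to shift the weight grading (the grading by $n$) by $m$; consequently the eigenvalue of $d$ is a function of the central-charge and weight degrees and is automatically respected — the relabeling $n\mapsto n-\tfrac{l(l+1)}{2}$ built into \cref{eq:moduli} is precisely the shift bringing the geometric grading into sectorwise agreement with the weight grading. For the Chow and Hodge statements one checks, by the computation already used to introduce \cref{coho}, that $Q_{b,i}$ and $P_{b,j}$ shift the cohomological degree by $rb-i$ and $j-rb$; hence each summand $Q_{a,i}P_{a-m,j}$ of $e_{ij}^{m}$ shifts it by $rm+j-i$ independently of $a$, so $e_{ij}^{m}$ is homogeneous of cohomological degree $rm+j-i$ and all three gradings are preserved, exactly as recorded at the $\mf{E}(r)$-level in \cref{prop:limit2}.

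The genuinely delicate point lies not in this reduction but in the input \cref{thm:fermion-boson}: one must know that $\mf{H}_{\infty}$ applied to a bounded bigraded $\mc{E}(r)$-representation is an admissible $\mf{E}(r)$-representation on which the normal-ordered sums defining $e_{ij}^{m}$ converge in every bidegree. But the admissibility of $\mf{H}_{\infty}(V)$ is exactly what was proved in the course of \cref{prop:big}, and the boundedness hypothesis of \cref{def:bounded.bigraded} guarantees that in a fixed central-charge-and-weight bidegree only finitely many terms $Q_{a,i}P_{a-m,j}$ act nontrivially; so this obstacle is already dissolved by the structural results of \cref{sec:fermion}. The only remaining bookkeeping, namely fixing the conformal vector so that $d$ literally implements the weight grading, is harmless, since \cref{main1} asserts compatibility only with the central-charge and weight bigradings rather than with a chosen $d$.
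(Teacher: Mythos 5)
Your proposal is correct and follows essentially the same route as the paper: the isomorphisms are taken from \cref{prop:limit} and \cref{prop:limit2} as isomorphisms of (admissible) infinite Clifford algebra representations, and the $\hat{\mathrm{gl}}_r$-action is then induced by restriction along the Boson--Fermion correspondence of \cref{thm:fermion-boson}. The only discrepancy is a harmless index convention in your formula for $e_{ij}^{m}$ (the paper uses $\sum_{a}:Q_{a,i}P_{a+m,j}:$ rather than $\sum_{a}:Q_{a,i}P_{a-m,j}:$), and your added checks on local finiteness of the normal-ordered sums and on the grading shifts are consistent with what the paper leaves implicit.
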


We notice that \cref{eq:main11} and \cref{eq:main2} have been proved as isomorphisms of $\mf{C}(r)$-representations in \cref{prop:limit,prop:limit2}. The remaining step is to induce the $\hat{\mathrm{gl}}_r$-representation from the $\mf{C}(r)$ action. This is the famous Boson--Fermion correspondence of I. Frenkel \cite{FRENKEL1981259}, where the Fermionic Fock space $\mf{F}(r)$ can also be decomposed as the direct sum of \textbf{basic representations} of $\hat{\gl}_r$ with different central charges:
 \begin{theorem}[Boson--Fermion correspondence, Theorem \RN{1} 4.7 of \cite{FRENKEL1981259}]
  \label{thm:fermion-boson}
  Consider the following operators over $\mf{F}(r)$:
  \begin{equation*}
    e_{ij}^a:= \sum_{m\in \mb{Z}} :Q_{m,i}P_{m+a,j}:,  i,j\in [r], a\in \mb{Z},
  \end{equation*}
  where 
  \begin{equation*}
    :Q_{m,i}P_{n,j}: \,=\begin{cases}
      Q_{m,i}P_{n,j}, \text{ if } n>0, \\
      -P_{n,j}Q_{m,i}, \text{ if } n\leq 0.
    \end{cases}
  \end{equation*}
  With identity operator $K$ and a derivation operator $d$, 
  the operators $e_{ij}^a, d, K$ form a representation of the affine Lie algebra $\hat{\gl}_r$. In particular, the central charge degree is preserved in this representation. Moreover, the space 
  \begin{equation*}
    \mf{F}(r)_{l,\bullet}:=\bigoplus_{n\in \mb{Z}}\mf{F}(r)_{l,n}
  \end{equation*}
   is the basic representation of affine $\mathrm{gl}_r$ with central charge $l$.
\end{theorem}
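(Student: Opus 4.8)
The plan is to carry out the standard fermionic derivation of I. Frenkel's theorem \cite{FRENKEL1981259} in our conventions, working directly on the Fermionic Fock space $\mf{F}(r)$ of \cref{def:maya}. First I would check that each $e_{ij}^{a}$ is a well-defined $\mb{Z}$-linear endomorphism of $\mf{F}(r)$: for a fixed $r$-Maya diagram $\mf{I}$, the boundary conditions $I_{m}=[r]$ for $m\ll 0$ and $I_{m}=\emptyset$ for $m\gg 0$ force all but finitely many summands of $\sum_{m\in\mb{Z}}{:}Q_{m,i}P_{m+a,j}{:}$ to annihilate $v_{\mf{I}}$ --- for $m\gg 0$ because $P_{m+a,j}v_{\mf{I}}=0$, and for $m\ll 0$ because the normal-ordered summand equals $-P_{m+a,j}Q_{m,i}v_{\mf{I}}$ with $Q_{m,i}v_{\mf{I}}=0$. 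The same slot-by-slot bookkeeping shows that $e_{ij}^{a}$ preserves the central charge $c_{\mf{I}}$ and shifts the weight $w_{r,\mf{I}}$ by $-a$ uniformly; this yields the assertion that the central charge is preserved, identifies the derivation as the operator $d\colon v_{\mf{I}}\mapsto -w_{r,\mf{I}}\,v_{\mf{I}}$ (up to an additive constant depending only on the central charge, fixed by the normalization of the basic representation), and makes \cref{eq:derivation} immediate.

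Next I would verify the bracket \cref{eq:affine}. Expanding $e_{ij}^{a}e_{kl}^{b}$ as a double sum of quartic monomials in the $P$'s and $Q$'s and repeatedly applying $\{P_{a,i},Q_{b,j}\}=\delta_{ij}\delta_{ab}$, the terms in which exactly one contraction occurs reassemble into $\delta_{jk}e_{il}^{a+b}-\delta_{il}e_{kj}^{a+b}$, while the discrepancy between a product of two normal-ordered bilinears and the fully normal-ordered quartic contributes the scalar Schwinger anomaly $a\,\delta_{il}\delta_{jk}\delta_{a,-b}\cdot\mathrm{Id}$, the coefficient $a$ arising by summing the constant $1$ over an index set of cardinality $|a|$. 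Together with $K=\mathrm{Id}$ this exhibits the $\hat{\gl}_{r}$-action, and since every $e_{ij}^{a}$ and $d$ preserves $c_{\mf{I}}$ we obtain the decomposition $\mf{F}(r)=\bigoplus_{l\in\mb{Z}}\mf{F}(r)_{l,\bullet}$ into level-$1$ submodules. To identify $\mf{F}(r)_{l,\bullet}$ with the basic representation I would exhibit the ground-state $r$-Maya diagram $\mf{I}^{(l)}$ of central charge $l$ (all slots equal to $[r]$ below a threshold, to $\emptyset$ above it, with an $s$-element slot in between for $l\equiv s\bmod r$), check that $v_{\mf{I}^{(l)}}$ is a highest-weight vector of weight $\Lambda_{l\bmod r}$ (up to the $d$-normalization) killed by the positive part of $\hat{\gl}_{r}$ and generating $\mf{F}(r)_{l,\bullet}$ under the lowering operators, and then deduce irreducibility by comparing the graded character --- computed from the combinatorics of Maya diagrams --- with the Weyl--Kac character of $L(\Lambda_{l\bmod r})$ (equivalently, by observing that any nonzero submodule must contain a highest-weight vector of the same $d$-degree). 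The periodicity $\mf{F}(r)_{l,\bullet}\cong\mf{F}(r)_{l+r,\bullet}$ follows.

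The one genuinely delicate point --- and the main obstacle --- is pinning down the anomaly term in \cref{eq:affine} with the correct sign and coefficient under our conventions, since $P_{a,i}$ and $Q_{a,i}$ act on $v_{\mf{I}}$ with the Koszul-type signs $(-1)^{\sum_{k>l}\mathrm{len}(I_{k})}$ of \cref{def:maya}, so one must track the consistency of these signs across all choices of $(i,j,k,l,a,b)$; everything else is routine normal-ordering calculus. Since \cref{thm:fermion-boson} is precisely Theorem~\RN{1}~4.7 of \cite{FRENKEL1981259}, an acceptable alternative is simply to cite that reference for the bracket computation and the identification of the basic representation, and to record here only the well-definedness on $\mf{F}(r)$ and the compatibility of the $\hat{\gl}_{r}$-action with the central-charge, weight, and cohomological gradings (the last given by \cref{coho}), which is all that \cref{prop:limit}, \cref{prop:limit2} and \cref{main1} actually use.
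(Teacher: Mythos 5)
Your proposal is correct, and it ends where the paper itself begins and ends: \cref{thm:fermion-boson} is imported verbatim as Theorem \RN{1} 4.7 of \cite{FRENKEL1981259}, with no proof given in the paper beyond the citation. Your sketch of the standard free-fermion argument (well-definedness via normal ordering, the Schwinger anomaly giving the level-$1$ central term, and the highest-weight/character identification of $\mf{F}(r)_{l,\bullet}$) is the correct argument behind that citation, and your closing remark that citing Frenkel suffices is exactly the route the authors take.
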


\appendix
\section{Visualizations of bounded bigraded $\mathcal{E}(r)$-representations}
\label{sec:visualization}
This appendix presents visualizations of bounded bigraded $\mathcal{E}(r)$-representations
\[  
V=\bigoplus_{l\le 0,\; n\in\mathbb{Z}} V_{l,n},
\]
in \cref{sec:boundedrep} by placing the graded pieces on the affine $(n,-l)$-plane and tracing the parabolic trajectories generated by the $e$-action, with particular attention to the geometric case $V_{l,n}=\mb{H}^*(\mathrm{M}^0(l,n))$.

\subsection{The affine $(n,-l)$-plane}
Let $\mathrm{M}^0(l,n)$ denote the moduli of stable perverse coherent sheaves from \cref{sec:gra}. Then each $\mathrm{M}^0(l,n)$ fits into a diagram
\[
\begin{tikzcd}
& \mathrm{M}^0(l,n) \arrow[dl, "\eta"'] \arrow[dr, "\zeta"] & \\
\mathcal{M}_H(n) & & \mathcal{M}_H(n-l)
\end{tikzcd}
\]
where $\eta$ and $\zeta$ are Grassmannian of rank $(-l)$ associated to the complexes $\mathcal{U}_0$ and $\mathcal{U}_0^\vee[1]$ (see \cref{thm:NY1}). We visualize all the moduli spaces $\mathrm{M}^0(l,n)$ by placing each $\mathrm{M}^0(l,n)$ at the lattice point on the affine $(x,y) = (n,-l)$-plane as in Figure~\ref{figure:affine-plane-e-action}; lattice points on the $n$-axis precisely correspond to $\mathrm{M}^0(0,n)=\mathcal{M}_H(n)$.

With this coordinate choice (so the $-l$-direction appears at $45^\circ$), the diagonal arrows record the geometric maps: the left-down $45^\circ$ arrow from $(n,l)$ to $(n,0)$ represents $\eta$, and the right-down $45^\circ$ arrow from $(n,l)$ to $(n-l,0)$ represents $\zeta$.
\begin{figure}[ht]
  \centering
  \begin{tikzpicture}[line cap=round, line join=round]

    \clip (-0.5,-0.5) rectangle (9.75cm, 3.75cm);

    \pgftransformcm{1}{0}{0.5}{0.5}{\pgfpoint{0cm}{0cm}}

    \draw[style=help lines, dashed] (-8,-2) grid[step=1cm] (26,15);
    \draw[->, gray] (-2,0) -- (9.6,0) node[below=0.2cm, left=0.1cm] {$x=n$};
    \draw[->, gray] (0,-2) -- (0,6.7) node[above=0.1cm,  left=0.1cm] {$y=-l$};

    \foreach \k in {-9,-8,...,27} {
      \pgfmathsetmacro{\lmin}{max(-1, \k-25)}
      \pgfmathsetmacro{\lmax}{min(15, \k+15)}
      \pgfmathsetmacro{\nA}{\k - \lmin}
      \pgfmathsetmacro{\lA}{\lmin}
      \pgfmathsetmacro{\nB}{\k - \lmax}
      \pgfmathsetmacro{\lB}{\lmax}
      \draw[style=help lines, dashed] (\nA,\lA) -- (\nB,\lB);
    }


    \foreach \nzero in {0}{
      \draw[red, thick, dashed]
        plot[domain=6:12, variable=\lv]
          ({(\lv*(\lv-2))/4 + \nzero}, {\lv});
    }


    \tikzset{
      ptred/.style={
        draw=black,
        circle,
        inner sep=1.2pt,
        fill=black
      },
      ptblue/.style={
        draw=blue,
        circle,
        inner sep=1.2pt,
        fill=blue
      },
      edgearrow/.style={
        -{Latex[length=2.6mm,width=1.8mm]},
      }
    }


    \draw[red, thick, dashed, edgearrow]
      plot[smooth, domain=0:1, samples=120, variable=\t]
        ({\t*(\t-1)}, {2*\t});
    \node[red!70!black, above left] at (-0.1,1.0) {$e$};

    \draw[red, thick, dashed, edgearrow]
      plot[smooth, domain=1:2, samples=120, variable=\t]
        ({\t*(\t-1)}, {2*\t});
    \node[red!70!black, above] at (1,3.1) {$e$};

    \draw[red, thick, dashed, edgearrow]
      plot[smooth, domain=2:3, samples=120, variable=\t]
        ({\t*(\t-1)}, {2*\t});
    \node[red!70!black, above right] at (3.7,5) {$e$};





    \draw[->, thin] (0,2) -- (2,0) node[midway, above right] {$\zeta$};
    \draw[->, thin] (2,4) -- (6,0) node[midway, above right] {$\zeta$};

    \draw[->, thin] (2,4) -- (2,0) node[midway, below right] {$\eta$};
    \draw[->, thin] (6,6) -- (6,0) node[midway, below right] {$\eta$};

    \node[ptred] at (0,0) {};
    \node[ptred] at (0,2) {};
    \node[ptred] at (2,4) {};
    \node[ptred] at (6,6) {};
    
     \node[ptred] at (2,0) {};
	\node[ptred] at (6,0) {};


    \node[above] at (2,4) {$\mathrm{M}^0(l,n)$};
    \node[below] at (2,0) {$\mc{M}_H(n)$};
    \node[below] at (6,0) {$\mc{M}_H(n-l)$};
    \node[above] at (5.5,6) {$\mathrm{M}^0(l-r, n-l)$};

  \end{tikzpicture}
  \caption{The affine $(n,-l)$-plane. The red dashed curve indicates the parabolic trajectory passing through $\mathrm{M}^0(l,n)$.}
  \label{figure:affine-plane-e-action}
\end{figure}
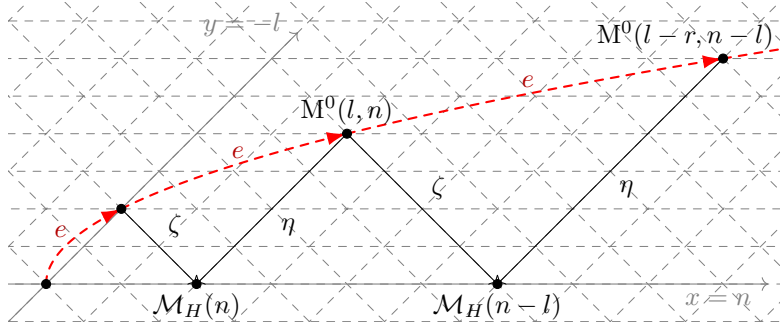

\subsection{Parabolic trajectories of the $e$-action}

More generally, let $$V=\bigoplus_{l \le 0,\; n\in\mathbb{Z}} V_{l,n}$$ be a bigraded $\mathcal{E}(r)$-representation as in \cref{sec:fermion}. We can similarly visualize $V$ by placing each $\mathbb{Z}$-module $V_{l,n}$ at the lattice point on the affine $(n,-l)$-plane, as in Figure~\ref{figure:affine-plane-e-action}. Iterating the action of $e\in\mathcal{E}(r)$ then produces
\[
V_{l,n}\xrightarrow{e}V_{l-r,\,n-l}\xrightarrow{e}V_{l-2r,\,n-2l+r}\xrightarrow{e}\cdots
\xrightarrow{e} V^{(m)}_{l,n}\xrightarrow{e} V^{(m+1)}_{l,n}\xrightarrow{e}\cdots,
\]
where
\[
V^{(m)}_{l,n}:=V_{\,l-mr,\; n-ml+\tfrac{m(m-1)}{2}r}
\]
corresponds to $t^m V(l,n)$ from \cref{def:bounded.bigraded}. Thus the orbit
\[
\mathbb{O}(l,n):=\{\,V^{(m)}_{l,n}\mid m \in \mathbb{Z}_{\ge 0} \,\}
\]
is represented by the sequence of lattice points lying on the parabola
\[
(x-n)=\frac{1}{2r}\,(y+l)\,(y-r-l)
\]
in the $(x,y)=(n,-l)$-plane; see Figure \ref{figure:affine-plane-e-action}. We call such a sequence $\mathbb{O}(l,n)$ a \textbf{parabolic trajectory}. Note that the parabolic trajectories $\mathbb{O}(l,n)$ and $\mathbb{O}(-r-l,n)$ lie on the same parabola, but they are disjoint unless $l\equiv -r-l\pmod r$.

Every $\mb{Z}$-module $V_{l',n'}$ lies on a unique trajectory $\mathbb{O}(l, n)$ with $1-r\le l \le 0$ and $n\in\mathbb{Z}$, and distinct trajectories $\mathbb{O}(l, n)$ with $1\!-\!r\le l\le 0$ are disjoint. Hence the lattice  labeled by $V_{l,n}$ is \textbf{foliated} by the parabolic trajectories $\mathbb{O}(l,n)$, indexed by $n\in\mathbb{Z}$ and $1-r\le l\le 0$.

In the geometric situation where $V_{l,n}=\mb{H}^* (\mathrm{M}^0(l,n))$, the Bogomolov--Gieseker inequality yields boundedness: there exists $n_0$ such that $V_{l,n}=0$ for all $n<n_0$. Moreover, the Chern class of the sheaves parametrized by $\mathrm{M}^0(l,n)$ satisfy the Bogomolov--Gieseker  inequality if and only if the Chern class of the sheaves parametrized by
\[
\mathrm{M}^m(l,n)\cong \mathrm{M}^0\!\big(l-mr,\; n-ml+\tfrac{m(m-1)}{2}\,r\big)
\]
does. Consequently, the potentially nonempty moduli $\mathrm{M}^0(l,n)$ occur precisely at lattice points lying on the parabolic trajectories $\mathbb{O}(l,n)$ with $n\ge n_0$ and $1-r\le l\le 0$; see \cref{figure:parabolas:r=2} for an illustration when $r=2$.


\begin{figure}[ht]
  \centering
  \begin{tikzpicture}[line cap=round, line join=round]

    \clip (-0.5,-0.5) rectangle (12.75cm, 3.75cm);

    \pgftransformcm{1}{0}{0.5}{0.5}{\pgfpoint{0cm}{0cm}}

    \draw[style=help lines, dashed] (-8,-2) grid[step=1cm] (26,15);
    \draw[->, gray] (-2,0) -- (12.6,0) node[below=0.1cm] {$n$};
    \draw[->, gray] (0,-2) -- (0,6.7) node[above left] {$-l$};
      
          \node[below=0.1cm, gray] at (0,0) {$n_0$};
          \node[below=0.05cm, gray] at (1,0) {$n_0+1$};
                    \node[below=0.1cm, gray] at (2,0) {$\cdots$};

 \foreach \k in {-9,-8,...,26, 27} {
      \pgfmathsetmacro{\lmin}{max(-1, \k-25)}
      \pgfmathsetmacro{\lmax}{min(15, \k+15)}
     \pgfmathsetmacro{\nA}{\k - \lmin}
       \pgfmathsetmacro{\lA}{\lmin}
        \pgfmathsetmacro{\nB}{\k - \lmax}
      \pgfmathsetmacro{\lB}{\lmax}
        \draw[style=help lines, dashed] (\nA,\lA) -- (\nB,\lB);}


\foreach \nzero in {0,...,12}{
  \draw[red, thin, domain=0:{5/3}, samples=50, smooth, variable=\t]
        plot ({\t*(\t-1) + \nzero}, {2*\t});
      \draw[red, thin]
        plot[domain=3:15, variable=\lv]
          ({(\lv*(\lv-2))/4 + \nzero}, {\lv});
    }

\foreach \nzero in {0,...,12}{
      \draw[blue, thin, domain=0:2, samples=60, smooth, variable=\t]
        plot ({\t*(\t-1) + \t + \nzero}, {2*\t + 1});
      \draw[blue, thin]
        plot[domain=5:15, variable=\lv]
          ({((\lv-1)^2)/4 + \nzero}, {\lv});
    }

\tikzset{
  ptred/.style={
    draw=red, 
    circle, 
    inner sep=0.8pt, 
    fill=red, 
  },
  ptblue/.style={
    draw=blue, 
    circle, 
    inner sep=0.8pt, 
    fill=blue, 
  },
  ptorange/.style={
    draw=orange, 
    circle, 
    inner sep=1.2pt, 
    fill=orange, 
    fill opacity=0.6, 
    draw opacity=0.8
  },
  }

   \foreach \l in {-1,...,12} {
      \pgfmathtruncatemacro{\lmod}{mod(\l,2)}
      \foreach \n in {-1,...,12} {

        \ifnum\lmod=0\relax
          \pgfmathsetmacro{\nthresh}{(\l*(\l-2))/4}
          \ifdim \n pt<\nthresh pt\relax\else
            \node[ptred] at (\n,\l) {};
          \fi
        \fi

        \ifnum\lmod=1\relax
          \pgfmathsetmacro{\nthresh}{((\l-1)^2)/4}
          \ifdim \n pt<\nthresh pt\relax\else
            \node[ptblue] at (\n,\l) {};
          \fi
        \fi

      }

    }

  \end{tikzpicture}
\caption{Visualization for a bounded bigraded $\mathcal{E}(r)$-representation $V$ in the case $r=2$. Each $\mb{Z}$-module $V_{l,n}$ is placed at the lattice point $(n,-l)$. Points with $l\equiv 0 \pmod{2}$ are shown in red, and those with $l\equiv 1 \pmod{2}$ in blue. The region of potentially nonzero $V_{l,n}$ is foliated by parabolic trajectories $\mathbb{O}(l,n)$ with $l\in\{-1,0\}$ (and $n\ge n_0$ in the geometric situation $V_{l,n}=\mb{H}^*(\mathrm{M}^0(l,n))$). Parabolic trajectories $\mathbb{O}(l,n)$ with $l=0$ are drawn in red, and those with $l=-1$ in blue.}
 \label{figure:parabolas:r=2}
\end{figure}
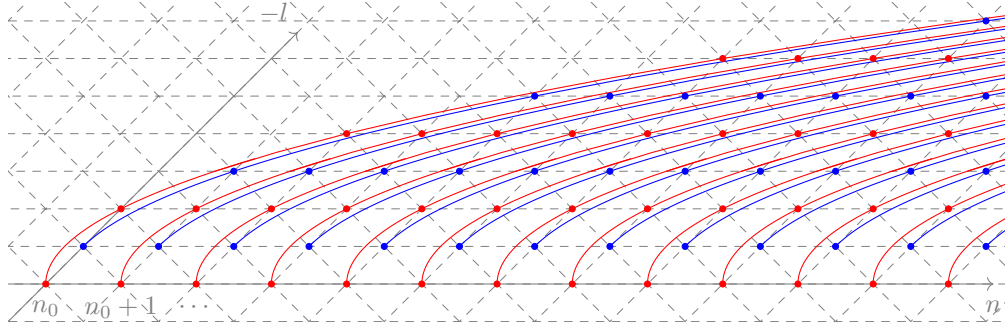

\subsection{The $\mathcal{E}(r)$-action}
The actions of $e$ (and hence $f$) are shown in Figures~\ref{figure:affine-plane-e-action} and \ref{figure:parabolas:r=2}. In particular, $e$ moves forward along each parabolic trajectory $\mathbb{O}(l,n)$, while $f$ moves backward. In these figures, the family of operators $q_{a,i}$ with $a\le 0$ from \cref{def:ind} can be depicted as follows, exhibiting a pattern similar to differentials on successive pages of a spectral sequence:
\[
\begin{tikzpicture}[scale=1.25]
    \tikzset{
      edgearrow/.style={
        -{Latex[length=3mm,width=1.8mm]} 
      }
    }
    
    \node[above] at (0,0) {$V_{l,n}$};
  \node[below] at (-1,-1) {$V_{l+1,n}$};
  \node[below] at (-3,-1) {$V_{l+1,n-1}$};
  \node[below]  at (-5,-1){$V_{l+1,n-2}$};
  \node at (-7,-1)  {$\cdots$};
    \node[above] at (-5,-0.5)  {$\cdots$};

  \fill (0,0) circle (1.5pt);
  \fill (-1,-1) circle (1.5pt);
  \fill (-3,-1) circle (1.5pt);
  \fill (-5,-1) circle (1.5pt);

  \draw[edgearrow] (0,0) -- (-1,-1) node[midway, below, sloped] {$q_{0,i}$};
  \draw[edgearrow] (0,0) -- (-3,-1) node[midway, below, sloped] {$q_{-1,i}$};
  \draw[edgearrow] (0,0) -- (-5,-1) node[midway, above, sloped] {$q_{-2,i}$};
\end{tikzpicture}
\]
The operators $p_{a,i}$ act in the directions opposite to the corresponding $q_{a,i}$.

\subsection{Stable colimits and $\mathfrak{E}(r)$-action}
By \cref{prop:big}, if $V$ is a bounded bigraded $\mathcal{E}(r)$-representation, then each parabolic trajectory $\mathbb{O}(l,n)$ stabilizes after finitely many steps:
\[
\mathfrak{H}_{\infty}(V)_{l,n} \cong V_{l,n}^{(m)} \quad \text{for all } m \gg 0.
\]
In the geometric setting $V_{l,n}=\mb{H}^*(\mathrm{M}^0(l,n))$, \cref{prop:limit} identifies the stable colimit as
\(
\mathfrak{H}_{\infty}(V)_{l,n} = \mb{H}^*(\mc{M}_{H_\infty}(l,n)).
\)

In the colimit $\mathfrak{H}_{\infty}(V)$, the operator $e$ is (formally) invertible, so each stabilized parabolic orbit as in Figure \ref{figure:parabolas:r=2} extends to the negative direction $\{\mathfrak{H}_{\infty}(V)_{l,n}^{(m)}\}_{m\in\mathbb{Z}}$, and all transition maps along the orbit are isomorphisms. The operators $E$ and $F$ move forward and backward, respectively, along each stabilized parabolic orbit. Moreover, the operators $p_{a,i}$ and $q_{a,i}$ for $m \le 0$ descend to well-defined operators $P_{a,i}$ and $Q_{a,i}$ on the colimit. Similar to $q_{a,i}$, the operators $Q_{a,i}$ for $a \ge 0$ can be visualized as follows:
\[
\begin{tikzpicture}[scale=1.25]
  \tikzset{
    edgearrow/.style={-{Latex[length=3mm,width=1.8mm]}}
  }

  \node[above] at (0,0) {$\mathfrak{H}_\infty(V)_{l,n}$};
  \node[below] at (1,-1) {$\mathfrak{H}_\infty(V)_{l+1,n}$};
  \node[below] at (3,-1) {$\mathfrak{H}_\infty(V)_{l+1,n+1}$};
  \node[below] at (5,-1) {$\mathfrak{H}_\infty(V)_{l+1,n+2}$};
  \node at (7,-1) {$\cdots$};
  \node[above] at (5,-0.5) {$\cdots$};

  \fill (0,0) circle (1.5pt);
  \fill (1,-1) circle (1.5pt);
  \fill (3,-1) circle (1.5pt);
  \fill (5,-1) circle (1.5pt);

  \draw[edgearrow] (0,0) -- (1,-1) node[midway, below, sloped] {$Q_{0,i}$};
  \draw[edgearrow] (0,0) -- (3,-1) node[midway, below, sloped] {$Q_{1,i}$};
  \draw[edgearrow] (0,0) -- (5,-1) node[midway, above, sloped] {$Q_{2,i}$};
\end{tikzpicture}
\]
The operators $P_{a,i}$ act in the directions opposite to the corresponding $Q_{a,i}$.

\end{document}